\documentclass[a4paper, reqno, 11pt]{amsart}

\usepackage[english]{babel}
\usepackage{amsmath}
\usepackage{mathrsfs}
\usepackage{amssymb}
\usepackage{amsthm}
\usepackage{enumerate}
\usepackage{ifthen}
\usepackage{bbm}
\usepackage{color}
\usepackage{xcolor}
\usepackage{graphicx}
\provideboolean{shownotes} 
\setboolean{shownotes}{true}
\usepackage{hyperref}
\usepackage{soul}
\usepackage{enumitem}
\usepackage{geometry}
\newcommand{\margnote}[1]{
\ifthenelse{\boolean{shownotes}}%
{\marginpar{\raggedright\tiny\texttt{#1}}}%
{}%
}

\newcommand{\hole}[1]{
\ifthenelse{\boolean{shownotes}}%
{\begin{center} \fbox{ \rule {.25cm}{0cm}
\rule[-.1cm]{0cm}{.4cm} \parbox{.85\textwidth}{\begin{center}
\texttt{#1}\end{center}} \rule {.25cm}{0cm}}\end{center}}
{}
}

\newtheorem{theorem}{Theorem}[section]   
\newtheorem{corollary}[theorem]{Corollary}
\newtheorem{lemma}[theorem]{Lemma}
\newtheorem{proposition}[theorem]{Proposition}

\theoremstyle{definition}
\newtheorem{definition}[theorem]{Definition}
\newtheorem{remark}[theorem]{Remark}

\allowdisplaybreaks

\numberwithin{equation}{section}

\subjclass{Primary: 35Q35; Secondary: 37L40, 60H15, 76M35.}
\keywords{Stochastic compressible fluids,  Navier-Stokes-Korteweg equations,  invariant measures.}

\begin{document}

\title[Existence of invariant measures]{Invariant measures for the one-dimensional stochastic Navier-Stokes-Korteweg equations}

\author[D. Donatelli, L. Pescatore, S. Spirito]{D. Donatelli, L. Pescatore, S. Spirito}
\address[D.Donatelli]{DISIM - Dipartimento di Ingegneria e Scienze dell'Informazione e Matematica\\ Universit\`a  degli Studi dell'Aquila \\Via Vetoio \\ 67100 L'Aquila \\ Italy}
\email[]{\href{donatella.donatelli@}{donatella.donatelli@univaq.it}}
\address[L.Pescatore]{DISIM - Dipartimento di Ingegneria e Scienze dell'Informazione e Matematica\\ Universit\`a  degli Studi dell'Aquila \\Via Vetoio \\ 67100 L'Aquila \\ Italy}
\email[]{\href{L.Pescatore@}{lorenzo.pescatore@univaq.it}}
\address[S. Spirito]{DISIM - Dipartimento di Ingegneria e Scienze dell'Informazione e Matematica\\ Universit\`a  degli Studi dell'Aquila \\Via Vetoio \\ 67100 L'Aquila \\ Italy}
\email[]{\href{stefano.spirito@}{stefano.spirito@univaq.it}}

\begin{abstract}
We investigate the long-time behaviour of a one-dimensional compressible viscous fluid with general capillarity and density dependent viscosity, driven by a stochastic additive noise.  In particular, we prove the existence of invariant measures by applying the Krylov-Bogoliubov method in a setting where the dynamics is supported on a non-complete phase space. This analysis is further enhanced by the derivation of a refined stability result determining the continuous dependence with respect to the initial data. The present paper exhibits some properties and results for Korteweg fluids which are not known in absence of the capillarity tensor.  In particular, we prove that the Markov semigroup associated with strong solutions is Feller and we can consider ranges of the adiabatic and viscosity exponents $\gamma$ and $\alpha$ larger than those available in the current ergodic literature for compressible fluids.  Also the interplay between the choice of the physical domain and the use of a damping term is discussed.
\end{abstract}

\maketitle
\section{Introduction}
\noindent
We consider a one dimensional compressible viscous fluid with capillarity, endowed with periodic boundary conditions.  The dynamics is driven by a stochastic forcing term and it is described by the following set of equations:
\begin{equation}\label{main system}
\begin{cases}
\partial_t \rho+ \partial_x(\rho u)=0, \\ 
\text{d} (\rho u)+ [\partial_x (\rho u^2) + \partial_x p(\rho)]\text{d}t= [\partial_x \mathcal{S} + \partial_x \mathcal{K} -\varepsilon \rho u ]\text{d}t+ \rho \sigma(x) \text{d}W.
\end{cases}
\end{equation}
Here $\rho(x,t), u(x,t)$ denote the density and the velocity of the fluid respectively,  while regarding the constitutive law for the pressure $p$ we
consider the isentropic regime 
\begin{equation}
p(\rho)= a^2 \rho^\gamma, \quad \gamma \ge 1,
\end{equation}
with adiabatic exponent $\gamma$ and $a$ being a dimensionless parameter which is inversely proportional to the Mach number.
The viscosity stress term $\mathcal{S}$ and the capillarity tensor $\mathcal{K}$ are of the form 
\begin{equation}
\mathcal{S}(\partial_x u)= \mu(\rho) \partial_x u, \quad \mathcal{K}(\rho, \partial_x \rho)= \rho \partial_x (k(\rho) \partial_x \rho) -\dfrac{1}{2} \big( k(\rho)+ \rho k'(\rho) \big) | \partial_x \rho |^2,
\end{equation}
for general viscosity and capillarity coefficients satisfying the power law relations
\begin{equation}\label{power laws cap visc}
\mu(\rho)=  \rho^\alpha, \quad \alpha \ge 0, \quad \; k(\rho)=  \rho^\beta, \quad \beta \in \mathbb{R}.
\end{equation}
Note that the capillarity term in the momentum equation can be rewritten as 
\begin{equation}\label{partial x K}
\partial_x \mathcal{K}(\rho, \partial_x \rho)= \rho \partial_x \bigg( \partial_x(k(\rho) \partial_x \rho)- \dfrac{k'(\rho)}{2} | \partial_x \rho |^2 \bigg).
\end{equation}
The Korteweg tensor $\mathcal{K}$ has been introduced in \cite{Korteweg} and subsequently it has been considered by Dunn and Serrin in the pioneer work \cite{Dunn and Serrin}.  We also refer to \cite{Gorban} for a kinetic description related to the non-truncated Chapman-Enskog expansion and to \cite{Denzler} for its use as a refined model for diffuse interface and phase transition. A review on these topics is collected in \cite{Pescatore}.
The choice of the capillarity exponent $\beta$ determines different structures of \eqref{partial x K} that can be found in fluid dynamics literature. In particular,  the case $\beta=-1$ for inviscid fluids corresponds to the Quantum-Hydrodynamic model \cite{Landau}, which in the multi-dimensional case reads $\operatorname{div} \mathcal{K}= 2\rho \nabla \big( \frac{ \Delta  \sqrt{\rho}}{\sqrt{\rho}} \big).$ On the other hand for $\beta=0$ the right hand side of \eqref{partial x K} reduces to $\rho \nabla \Delta \rho,$ that is commonly used as the basic model for capillary fluids.
Our analysis also includes the contribution of a linear damping term $-\varepsilon \rho u,$ with $\varepsilon$ being a positive parameter, while the stochastic force $\rho \sigma(x) \text{d}W$ is a multiplicative noise which reduces to an additive one in the velocity formulation.  We refer to Section \ref{Sec2} for the precise definition of the stochastic setting.
\\
\\
System \eqref{main system} is endowed with the following deterministic initial conditions
\begin{equation}
\rho(x,0)= \rho_0(x), \quad u(x,0)=u_0(x), 
\end{equation}
having strictly positive initial density and normalized total mass. 
The study of invariant measures is of fundamental importance to investigate the long-time behaviour of stochastically forced flows and to provide a connection between theoretical aspects and applications. From a probabilistic point of view, the ergodic theory focuses on the analysis of long-time properties of the fluid flow, postulating that for large times the dynamics tend toward a statistical equilibrium described by a unique invariant measure. To this purpose, a wide fluid dynamical literature concerning incompressible flows has been developed in the recent years. In particular, we mention \cite{Flandoli 0},  \cite{Hairer},  \cite{Bricmont}, \cite{Flandoli-Maslowski},  \cite{Da Prato 2},  \cite{Kuksin} for the 2D case and \cite{Da Prato- Deb}, \cite{Flandoli-Romito}, \cite{Romito} in the 3D setting. For the latter, we also highlight the results \cite{Flandoli-Gatarek} where statistically stationary solutions are considered and \cite{Foias}, \cite{Vishik} for an analogue in a deterministic setting.  Nevertheless, the use of a damping term has been considered in \cite{Bessaih}, \cite{Brzezniak 2}, see also the recent results \cite{Brzezniak}, \cite{Brzezniak 3} for the analysis of invariant measures in the context of the 2D stochastic nonlinear damped Schroedinger equations.
\\
\\
The scenario for compressible fluids is significantly different.  The problem is indeed strongly degenerate since the driving force is acting only on one component of the phase space and conserved quantities such as the total mass persist under its action.  Also, the ``hyperbolic'' structure of this wide class of fluid mechanics equations leads to a significant loss of well-posedness and in particular to the lack of uniqueness results for global in time solutions.  In the probabilistic setting, this translates to the absence of the Markov property for which the concepts of transition functions and invariant measures cannot be defined in the standard manner.
\\
Our interest is to investigate the long-time-behaviour of system \eqref{main system}. In particular, we focus on the existence of invariant measures and we provide regimes for the existence result in terms of the viscosity, capillarity and adiabatic exponents. The natural phase space for global solutions of \eqref{main system} is given by 
\begin{equation}\label{phase space}
\mathcal{X}= \bigg\{ (\rho,u) \in H^2(\mathbb{T}) \times H^1(\mathbb{T}) \; : \; \int_{\mathbb{T}} \rho(x)dx=1,  \; \rho >0 \bigg\},
\end{equation}
which is dictated by the well-posedness result Theorem \ref{Thm well posedness}.
\\
\\
The analysis of Korteweg viscous fluids strongly relies on a Bresch-Desjardins entropy approach which was first introduced in \cite{Bresch2} and \cite{BD} for the compressible Navier-Stokes equations, see also \cite{Mellet} and \cite{Const}  for the 1D case. A key role in this study is indeed played by the effective velocity $$V=u + \dfrac{\mu(\rho) \partial_x \rho}{\rho^2},$$ whose analysis determines the following conditions
\begin{equation}\tag{SCC}\label{SCC}
2 \alpha -4 \le \beta \le 2 \alpha -1
\end{equation}
and 
\begin{equation}\tag{NV}\label{NV}
0 \le \alpha \le \dfrac{1}{2} \quad \text{or} \quad \beta \le -2.
\end{equation}
In particular the \textit{strong coercivity condition} \eqref{SCC} introduced by Germain and LeFloch \cite{Le Floch} determines the admissibility range in which global solutions are expected. On the other hand condition \eqref{NV} prevents the formation of the vacuum states of the density, provided that they are not present at the initial time.  In order to investigate the existence of invariant measures one has to define a genuine Markovian framework and therefore also uniqueness of solutions is required.  To this purpose we note that in the underlying regime, global existence and uniqueness of strong solutions is provided, see Theorem \ref{Thm well posedness} and \cite{Pesc}.
The main theorem of the present paper is the following:
\begin{theorem}\label{main theorem}
Let $\alpha \ge 0,$ $\beta \in \mathbb{R}$ and $\gamma \ge 1.$ Assume that one of the following conditions is satisfied
\begin{itemize}
\item [(1)] $\alpha \in [0, \frac{1}{2}],\quad \beta \in [2\alpha-4,-1-\alpha],$
\item [(2)] $\alpha \in (\frac{1}{2}, 1], \; \; \, \, \beta \in [2\alpha-4,-2].$
\end{itemize}
Then, there exists a probability measure $\nu \in \mathfrak{B}(\mathcal{X}_{1,0})$ which is invariant under the Markov semigroup $\mathcal{P}_t$ generated by system \eqref{main system}.  Furthermore, the following inequality holds
\begin{equation}\label{ineq inv meas}
\int_{\mathcal{X}} \mathcal{D}[\rho,u] \,  d\nu(\rho,u) \le  \dfrac{1}{2}\| \sigma \|^2_{L^\infty}, 
\end{equation}
where  $\mathcal{D}[\rho,u]$ denotes the entropy dissipation functional given by \eqref{entropy dissipation} and $\mathcal{X}_{1,0}$ represents the phase space $\mathcal{X}$ endowed with the $ H^1 \times L^2$ metric.
\end{theorem}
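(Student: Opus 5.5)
The plan is to run the Krylov--Bogoliubov procedure on the non-complete phase space $\mathcal{X}_{1,0}$. The first ingredient is the Feller property of $\mathcal{P}_t$ in the $H^1\times L^2$ topology, which I take from the stability/continuous dependence result announced in the abstract. Concretely, if $(\rho_0^n,u_0^n)\to(\rho_0,u_0)$ in $\mathcal{X}_{1,0}$, the corresponding strong solutions given by Theorem \ref{Thm well posedness} converge in probability in $\mathcal{X}_{1,0}$ at each fixed time $t$. Dominated convergence then gives $\mathcal{P}_t\varphi(\rho_0^n,u_0^n)\to\mathcal{P}_t\varphi(\rho_0,u_0)$ for bounded continuous $\varphi$. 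The Markov property follows from pathwise uniqueness of strong solutions.

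Fix a smooth initial datum $(\rho_0,u_0)\in\mathcal{X}$, for instance $\rho_0\equiv1$ and $u_0=0$. Define the time averages
\[
\nu_T=\frac1T\int_0^T \mathcal{P}_t^*\delta_{(\rho_0,u_0)}\,dt .
\]
The key estimate is the It\^o version of the combined energy plus Bresch--Desjardins entropy balance, written in terms of the effective velocity $V$. Apply It\^o's formula to the total entropy $\mathcal{E}[\rho,u]$, which combines kinetic energy, pressure potential, the Korteweg energy $\frac12\int k(\rho)|\partial_x\rho|^2$ and the analogous quantity with $V$ in place of $u$. Since the noise is $\rho\sigma\,dW$, the It\^o correction is $\frac12\int\rho|\sigma|^2\,dx\le\frac12\|\sigma\|_{L^\infty}^2$, using $\int\rho=1$. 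The damping term $-\varepsilon\rho u$ and the viscous and capillary terms together produce the dissipation $\mathcal{D}[\rho,u]$. Taking expectations gives
\[
\mathbb{E}\,\mathcal{E}(t)+\int_0^t\mathbb{E}\,\mathcal{D}(s)\,ds\le \mathcal{E}(0)+\frac t2\|\sigma\|^2_{L^\infty}.
\]
Dividing by $T$ yields $\int\mathcal{D}\,d\nu_T\le \mathcal{E}(0)/T+\frac12\|\sigma\|_{L^\infty}^2$.

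For tightness I need $\mathcal{D}$ to have compact sublevel sets in $\mathcal{X}_{1,0}$, and this is exactly where conditions (1)--(2) enter. Under these conditions, $\mathcal{D}$ should control $\|u\|_{H^1}$ through the damping and viscosity, and $\|\rho\|_{H^2}$ through the capillary dissipation, of the form $\int\rho^{\alpha+\beta}|\partial_{xx}\rho|^2$ and similar. It should also control a negative power of $\rho$, i.e.\ a lower bound $\rho\ge c>0$ in terms of $\mathcal{D}$. This lower bound would come from a one-dimensional Sobolev bound on $\rho^{-s}$ together with $\int\rho=1$, using $\beta\le-1-\alpha$ or $\beta\le-2$. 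A set $\{\mathcal{D}\le R\}$ is then bounded in $H^2\times H^1$ with $\rho\ge c_R>0$, hence compact in $H^1\times L^2$. It also stays inside $\mathcal{X}$, since the uniform positive lower bound and unit mass are preserved under limits. This is the main obstacle: proving coercivity of $\mathcal{D}$, especially the vacuum-avoiding lower bound, with constants depending only on $R$, and checking the exponent arithmetic in cases (1) and (2). I will first try interpolating $\partial_x\rho^{-m}$ against the capillary dissipation.

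Chebyshev then gives $\nu_T(\{\mathcal{D}>R\})\le C/R$, so the family $\{\nu_T\}$ is tight on $\mathcal{X}_{1,0}$. By Prokhorov's theorem, which needs only metrizability for tight families, a subsequence $\nu_{T_n}\rightharpoonup\nu$. Invariance of $\nu$ follows from the Feller property through the standard identity $\mathcal{P}_s^*\nu_T-\nu_T=O(s/T)$. Finally, \eqref{ineq inv meas} follows from lower semicontinuity of $\mathcal{D}$ on $\mathcal{X}_{1,0}$: apply the portmanteau theorem to the truncations $\min(\mathcal{D},M)$ and then let $M\to\infty$ by monotone convergence.
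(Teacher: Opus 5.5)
Your plan is the paper's proof. The Feller property on $\mathcal{X}_{1,0}$ comes from the relative-entropy stability result (Proposition \ref{continuous dependence on the initial data}, which in fact gives a.s.\ convergence, stronger than you need). The time averages are controlled by \eqref{entropy ineq Q/2}. Tightness holds because sublevel sets of $\mathcal{D}$ lie in sets bounded in $H^2\times H^1$ with $\rho$ kept away from $0$ and $\infty$ (Proposition \ref{Prop tight}). Prokhorov and portmanteau then work on the non-complete space, and invariance follows from Feller.

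The step you call the main obstacle is Lemma \ref{lemma estimates tight}, and it needs no interpolation. The mass constraint gives $x_0$ with $\rho(x_0)=1$. With $\theta=\frac{\alpha+\beta+1}{2}\le 0$, one has $\rho^{\theta}(x)\le 1+\|\partial_x\rho^{\theta}\|_{L^2}\le 1+c_p\|\partial_{xx}\rho^{\theta}\|_{L^2}$ (using $\log\rho$ if $\theta=0$), which is the lower bound.

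What your sketch leaves out is the upper bound on $\rho$. Since $\alpha+\beta-1<0$, the capillary weight $\rho^{\alpha+\beta-1}$ in front of $|\partial_{xx}\rho|^2$ degenerates where $\rho$ is large. The paper obtains $\sup\rho$ by the same argument applied to the pressure term $\|\partial_x\rho^{\frac{\gamma+\alpha-1}{2}}\|_{L^2}$ in $\mathcal{D}$. Only with both bounds do $\|\partial_x\rho\|_{L^2}$ and $\|\partial_{xx}\rho\|_{L^2}$ follow, and then $\|u\|_{H^1}$ via the damping and viscous terms.

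One further caveat: $c(\alpha,\beta)$ in \eqref{def c(alpha,beta)} is negative near $\beta=2\alpha-4$, e.g.\ $c(0,-4)=-\frac{64}{81}$. So bounding $\|\partial_{xx}\rho^{\theta}\|_{L^2}$ by $\mathcal{D}$ requires Bernis' inequality. At the endpoint itself that inequality is sharp, so the upper bound on $\rho$ must be used first. The paper passes over this point.

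The one place where you genuinely differ is \eqref{ineq inv meas}. The paper takes a stationary solution with law $\nu$ and applies \eqref{entropy ineq Q/2}. That tacitly needs $\int\mathcal{E}\,d\nu<\infty$, so that $\mathbb{E}\,\mathcal{E}(t)$ and $\mathbb{E}\,\mathcal{E}(0)$ can be cancelled, and this is never established.

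Your argument avoids that issue. You use $\int\mathcal{D}\,d\nu_T\le \mathcal{E}(\rho_0,u_0)/T+\frac12\|\sigma\|^2_{L^\infty}$, then lower semicontinuity of $\mathcal{D}$ on $\mathcal{X}_{1,0}$, portmanteau applied to $\min(\mathcal{D},M)$, and monotone convergence in $M$. The lower semicontinuity follows from the same coercivity bounds used for tightness. Along an $H^1\times L^2$-convergent sequence with bounded $\mathcal{D}$, one gets weak convergence in $H^2\times H^1$. The quadratic terms are then weakly lower semicontinuous and the $L^4$ term converges strongly. Your route is therefore the more self-contained of the two for this inequality.
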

\begin{remark}
In the zero-capillarity case, the global well-posedness conditions reduce to $\alpha \in [0,\frac{1}{2}],$ while the existence of invariant measures is obtained in \cite{Coti} by assuming constant viscosity and $\gamma=1.$ Our result covers several fluid dynamics models, including the Quantum-Navier-Stokes equations, $\beta=-1,$ with constant viscosity and $\gamma \ge1$ and the shallow water system, $\alpha=1$ and $\gamma=2,$ augmented with the capillarity (surface tension) term given by \eqref{partial x K} with $k(\rho)=\rho^{-2}.$
\end{remark}
\subsection{Comparisons with previous results}
The investigation of the long-time-behaviour of stochastically forced compressible fluids is very recent.  In absence of uniqueness of global solutions, the analysis focuses on the study of statistically stationary solutions,  see \cite{Breit Feir Hof 2},\cite{Breit Feir Hof 5},\cite{Breit Feir Hof Mas} and also \cite{Fanelli3} for the deterministic case.  The common feature of these results is to consider the whole trajectory as an initial datum in the spirit of \cite{Foias} and \cite{Vishik}. The dynamics is then described by time shifts of solutions and different definitions of statistically stationary solutions can be considered.  Also convergence results of time averages are available, see \cite{Breit Feir Hof 5}, \cite{Fanelli3}.
To the best of our knowledge,  the only available result in the literature concerning the existence of invariant measures for compressible fluids is \cite{Coti} where the case of the one-dimensional stochastic Navier-Stokes equations is considered. For the the latter model we also mention the results in \cite{G-Q Chen} and \cite{Gas} where a vanishing Mach number limit and a Lagrangian approach are considered respectively. Note that however, in \cite{Coti} the restrictions $\gamma=1,\; \alpha=0$ occur and vanishing Dirichlet boundary conditions for the velocity and for the noise are assumed.  We therefore highlight the following elements:
\begin{itemize}
\item The presence of the dispersive term in the momentum equation of system \eqref{main system} allows us to deduce bounds for high order derivative norms of the density, which grow with an appropriate rate with respect to time and also control the vacuum. Thus, the conditions $\gamma=1, \, \alpha=0$ in \cite{Coti} can be replaced by assuming $\alpha+\beta+1 \le 0,$ and using the control of $ \int_{0}^{T} \| \partial_{xx} \rho^{\frac{\alpha+\beta+1}{2}} \|^2_{L^2} dt $ with linear in time growth rate. The ranges stated in Theorem \ref{main theorem} arise as intersection of the well-posedness conditions \eqref{SCC} and \eqref{NV}, together with the following assumptions for the tightness \eqref{T}:
\begin{equation}\label{T} \tag{T}
\alpha+\beta+1 \le 0.
\end{equation}
\item 
By virtue of a refined stability result, Proposition \ref{continuous dependence on the initial data}, we prove that the Markov semigroup is Feller with respect to the $H^1 \times L^2$ metric. This is in contrast to the case without capillarity, where an additional entropy constraint is required, see \cite[Theorem 2.7]{Coti}. The proof of the stability result relies on a relative entropy approach which is new in the considered regimes of the $\alpha$ and $\beta.$
Furthermore,  Theorem \ref{main theorem} provides no restrictions on $\gamma $ and it also accounts for the case of non-constant viscosity.  
\\
\item The extension from the interval $[0,1]$ to the one-dimensional flat torus $\mathbb{T}$ gives rise to some issues concerning the energy growth and the choice of the velocity component of the phase space $\mathcal{X}.$ The stochastic forcing term is indeed constantly adding energy to the system and no vanishing boundary conditions are considered.  We overcome these problems by introducing a linear damping term $-\varepsilon \rho u,$ which is of typical interest in the analysis of energetically open systems, see \cite{Kuan}, \cite{Bessaih}, \cite{Brzezniak 3}, \cite{Brzezniak 2}, \cite{Brzezniak}.  To be precise, it allows us to perform a stochastic compactness argument, see Proposition \ref{Prop tight}, by recovering an appropriate control of $\| u \|_{L_x^2}$ in terms of the energy and density norms. 
\end{itemize}
\noindent
The existence of invariant measures is proved by using the Krylov-Bogoliubov method. In particular, we define the collection of the so-called time averages measures $\{\nu_T\}_{T >0},$ and then we perform a stochastic compactness argument by virtue of a Bresch-Desjardins entropy approach. Note that despite this being a quite standard procedure, several difficulties arise.  The nonlinear structure of the Korteweg tensor $\mathcal{K},$ with general exponent $\beta \in \mathbb{R},$ is indeed non-trivial to handle in the a priori estimates. Furthermore, the phase space $\mathcal{X}$ is not complete due to the condition $\rho > 0,$ therefore the proof of the tightness requires careful arguments and the obtained limit measure $\nu$ is not automatically a probability measure.  To show that $\nu$ is also an invariant measure, one has to exploit the regularity properties of the Markov semigroup $\mathcal{P}_t.$ To this purpose we highlight that the natural $H^2 \times H^1$ topology on $\mathcal{X}$ does not reflect the available continuous dependence with respect to the initial data.  This issue arises from the absence of uniqueness results within the class of finite energy/entropy solutions which is typical in the scenario of compressible fluids. We overcome this problem providing an $H^1 \times L^2$ based continuity result which relies on a relative entropy approach and allow us to establish the Feller property. The stability result for $\beta \in \mathbb{R}$ is novel and it strongly relies on the skew-adjoint structure of \eqref{main system}, which is revealed trough a suitable change of variable, see \cite{Benz}.  We also highlight that the relative entropy analysis covers the case of general exponents $\alpha$ and $\beta.$ Therefore we believe that this result is of independent interest and may be employed across different contexts.
\subsection{Open problems}
This result gives rise to further developments concerning uniqueness and the analysis of singular limits. Concerning the uniqueness of invariant measures, we note that the unforced system evolves exponentially to the steady state $(1,0)$ which is selected as the unique steady state by the presence of the damping.  On the other hand, the dissipation inequality \eqref{ineq inv meas} suggests that in the low Mach number limit $a \rightarrow \infty$ the measure $\nu=\nu_a$ concentrates on the set $\{ \partial_x \rho=0 \}$ and thus $\rho=1$ because of the mass constraint. Note that, thanks to the presence of the damping term, this is consistent with system \eqref{main system}, without any scaling argument on $\sigma(x)$ with respect to the Mach number, cfr. \cite{Coti}. The limit dynamics is formally described by 
\begin{equation}\label{Orn-Ulh}
\text{d} u= -\varepsilon u \text{d}t+ \sigma(x) \text{d}W,
\end{equation}
which is an Ornstein-Uhlenbeck process. We aim to address the uniqueness problem, together with the convergence towards the unique invariant measure of \eqref{Orn-Ulh} in future research.
\subsection{State of art of for deterministic and stochastic Korteweg fluids} In the one-dimensional case, the analysis of the deterministic Navier-Stokes-Korteweg equations has been addressed in \cite{Le Floch} and global existence of finite energy weak solutions has been proved.  Recently, Bresch and collaborators in \cite{Ant} provide global existence of weak solutions in the range $2\alpha-3 \le \beta \le 2\alpha-1,$ which corresponds to the \textit{tame capillarity condition} introduced in \cite{Le Floch} in which viscous effects are dominant over capillarity in the dynamics.
We also refer to \cite{Chen} and \cite{Burtea1}, where the authors considered capillarity and viscosity coefficients coupled through appropriate relations.  Concerning the global well-posedness in the framework of strong solution in the regime given by \eqref{SCC} and \eqref{NV} we refer to \cite{Pesc} and to \cite{D.P.S.} for the case of the Quantum-Navier-Stokes equations. Furthermore, the latter model has been also studied for solutions in which the vacuum is allowed and global existence of weak solutions is obtained for $ \alpha \in (\frac{1}{2}, 1].$ Note that despite the results \cite{Pesc}, \cite{D.P.S.} and \cite{D.P.S.2} account for the case of stochastically forced flows, they are new even in the deterministic case.
To the best of our knowledge, the case $d=2,3$ with general capillarity and viscosity satisfying \eqref{power laws cap visc} is an open problem and no results of global solutions are available with the current methods. In the case of the Quantum-Navier-Stokes equations with degenerate linear viscosity, global existence of weak solutions has been proved in \cite{Spirito2}, \cite{Lacroix},  \cite{Jung qns}, \cite{Vasseur Yu 1}, \cite{Ant6}, \cite{Jung qns 2}.
The case of constant capillarity, $\beta=0,$ has been addressed in \cite{Hatt} and \cite{Spirito} where local well-posedness for strong solutions and global existence of weak solutions are obtained. We also mention \cite{Fanelli1} and \cite{Fanelli2} where the case of rotating fluids with capillarity is considered.
For the inviscid case with general capillarity coefficients, namely the Euler-Korteweg equations, we refer to \cite{Benz} for the local existence of smooth solutions and to \cite{Aud} for small data well-posedness.  Also a non-uniqueness result using convex integration has been obtained in \cite{Feir. Don.}. Furthermore, the Quantum-Hydrodynamics system has been widely studied in the series of papers \cite{Ant2}, \cite{Ant1}, \cite{Ant3}, \cite{Ant4} in the framework of global finite energy weak solutions.
\\
\\
The analysis of stochastic Korteweg fluids is recent and it consists of the series of aforementioned results \cite{D.P.S.}, \cite{D.P.S.2}, \cite{Pesc}. In contrast, most of the literature deals with the compressible Navier-Stokes equations with constant viscosity which has been addressed in \cite{Breit Hofmanova} and subsequently in the series of works by Breit, Feireisl and Hofmanov\'{a} \cite{Breit Feir Hof 1},  \cite{Breit Feir Hof 2}, \cite{Breit Feir Hof 3}, \cite{Breit Feir Hof 4}, \cite{Breit Feir Hof 5} and their monograph \cite{Feir}.  A recent analysis concerning the study of stochastic perturbations of transport type can be found in \cite{Breit Feir Hof Zat} and \cite{Breit Feir Hof Much}. Finally, in \cite{Zatorska} the authors proved sequential stability of weak martingale solutions for the compressible Navier-Stokes equations with degenerate linear viscosity and some ill-posedness results for the stochastic full Euler system have been obtained in \cite{Chiod Feir Fland} by means of stochastic convex integration methods. Concerning the analysis of the long-time behaviour of solutions to stochastically forced compressible fluid equations we refer to the already discussed results \cite{Breit Feir Hof 2},\cite{Breit Feir Hof 5},\cite{Breit Feir Hof Mas}, \cite{Coti} and the recent contribution \cite{Kuan} where the stochastic isentropic Euler equations with linear damping are considered.
\\
\\
\textbf{Organization of the paper.} The paper is organized as follows. We devote Section \ref{Sec2} to the description of the stochastic setting and to the well-posedness results for system \eqref{main system}. We also introduce the Markovian framework and we present some structural properties of the Markov semigroup associated with compressible fluid flows. In Section \ref{Sec3} we provide a BD entropy method which allows us to prove suitable a priori estimates.
Section \ref{Sec4} is devoted to the relative entropy approach determining a uniqueness result and the appropriate continuous dependence on the initial data.  The regularity of the Markovian framework and the Krylov-Bogoliubov method are then considered in Section \ref{Sec5},  together with the proof of our main result, Theorem \ref{main theorem}. To conclude, Appendix \ref{Appendix} is devoted to the proof of the relative entropy balance \eqref{eq rel entropy }, while Appendix \ref{Appendix B} contains some exponential moment bounds that we aim to use for future research.
\\
\\
\textbf{Acknowledgements.} The authors are grateful to Michele Coti-Zelati and Lucio Galeati for fruitful discussions. 
The authors gratefully acknowledge the partial support by the Gruppo
Na\-zio\-na\-le per l’Analisi Matematica, la Probabilit\`a e le loro
Applicazioni (GNAMPA) of the Istituto Nazionale di Alta Matematica
(INdAM), and by the PRIN 2020 ``Nonlinear evolution PDEs, fluid
dynamics and transport equations: theoretical foundations and
applications'' and by the PRIN2022
``Classical equations of compressible fluids mechanics: existence and
properties of non-classical solutions''. The first and the third authors gratefully acknowledge the partial support by PRIN2022-PNRR ``Some mathematical approaches to climate change and its impacts.''
\section{Stochastic setting and well-posedness}\label{Sec2}
\noindent
In this Section we recall the well-posedness results for system \eqref{main system} and we introduce the Markovian framework associated with strong solutions. We also provide a description of the Markovian properties of one-dimensional compressible fluid flows.
\\
\\
\textbf{Notation.} Besides the standard notations used in the literature, we consider the following conventions.  The Lebesgue space is denoted by $L^p(\mathbb{T})$ with related norm $\| \cdot \|_{L^p}.$ Similarly $W^{k,p}(\mathbb{T})$ denotes the Sobolev space of $L^p(\mathbb{T})$ functions with $k$ distributional derivatives in $L^p(\mathbb{T})$ and $H^k(\mathbb{T})$ corresponds to the case $p=2.$ For a given Banach space $X$ we consider the Bochner space for time dependent functions with values in $X$,  namely $C(0,T;X)$, $L^p(0,T;X).$
The functional spaces $\mathcal{M}_b(\mathcal{X})$ and $C_b(\mathcal{X})$ denote the set of real-valued bounded measurable functions and real-valued continuous and bounded functions respectively. $\mathcal{B}(\mathcal{X})$ represents the family of Borel subset of $\mathcal{X},$ while $\mathfrak{B}(\mathcal{X})$ is the set of probability measure on $\mathcal{X}.$
\subsection{Stochastic setting}
We consider a stochastic basis with right continuous filtration $$(\Omega, \mathcal{F}, (\mathcal{F}_t)_{t \ge 0}, \mathbb{P}, W)$$
and we define the stochastic forcing term arising in \eqref{main system} by
\begin{equation}\label{stoch force}
\sigma \text{d}W= \sum_{k=1}^{\infty} \sigma_k(x) \text{d}W^k(t).
\end{equation}
Specifically,  \eqref{stoch force} is an additive noise defined through a sequence of mutually independent standard one-dimensional Wiener processes $W^k(t).$ For any fixed $k,$ the corresponding process $\text{d}W^k(t)$ is then a stationary in time white noise and the related stochastic integral is understood in the It\^{o} sense.
Furthermore, we assume the following condition on $\sigma:$
\begin{equation}
\| \partial^3_{x} \sigma \|^2_{L^2}:= \int_{\mathbb{T}} \sum_{l=1}^{\infty} | \partial^3_{x} \sigma_l (x) |^2 dx < \infty, \quad \sigma_l \in H^3,
\end{equation}
from which in particular we have $$ \| \sigma \|^2_{L^\infty}:= \sup_{x \in \mathbb{T}} \sum_{l=1}^{\infty} | \sigma_l (x) |^2 < \infty.$$
The particular structure of \eqref{stoch force} allows us to perform the following change of variable $ \tilde{u}=u- \sigma W,$ which provides a transformation of \eqref{main system} into a random PDE,  see system \eqref{random pde}. The latter system can be then treated with a pathwise approach and deterministic techniques.  Nevertheless, we observe that the literature developed in the monograph \cite{Feir} and in \cite{D.P.S.}, \cite{D.P.S.2}, \cite{Pesc} for the Korteweg equations with multiplicative noise,  carries over with straightforward modifications the case \eqref{stoch force}.  Note that, in addition, the aforementioned results account also for the case of random initial data. Our stochastic setting is therefore more tractable from a well-posedness perspective, hence we infer the following theorem which is based on the analysis provided in \cite{Le Floch}, \cite{D.P.S.}, \cite{D.P.S.2}, \cite{Pesc}.
\begin{theorem}\label{Thm well posedness}
Given a stochastic basis with right continuous filtration $(\Omega, \mathcal{F}, (\mathcal{F}_t)_{t \ge 0}, \mathbb{P}, W).$ Let $(\rho_0,u_0) \in \mathcal{X},$ then there exists a unique global strong solution $(\rho(t;\rho_0), u(t;u_0))$ to system \eqref{main system} in the following regularity class
$$ \rho \in L^\infty(0,T; H^2(\mathbb{T}))\cap L^2(0,T; H^3(\mathbb{T})), \; u \in L^\infty(0,T; H^1(\mathbb{T})) \cap L^2(0,T; H^2(\mathbb{T})), \; \quad \mathbb{P}-a.s.$$ $$ \rho \in L^{\infty}((0,T) \times \mathbb{T}), \quad  \dfrac{1}{\rho} \in L^{\infty}((0,T) \times \mathbb{T}),\; \quad \mathbb{P}-a.s.$$
provided that \eqref{SCC} and \eqref{NV} are satisfied. 
\end{theorem}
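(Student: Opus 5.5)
The plan is to reduce \eqref{main system} to a random PDE and treat it pathwise, importing the deterministic and stochastic well-posedness theory of \cite{Le Floch}, \cite{D.P.S.}, \cite{Pesc}. Since the noise is additive in the velocity formulation, set $\tilde u = u - \sigma W$. Then $\rho$ satisfies $\partial_t\rho + \partial_x(\rho(\tilde u+\sigma W))=0$, and $\tilde u$ solves a momentum equation with random but, for a.e. $\omega$, continuous-in-time coefficients of regularity $\sigma W\in C([0,T];H^3)$. For fixed $\omega$, local existence and uniqueness of a strong solution in $H^2\times H^1$ with $\rho>0$ follows by a standard fixed point (or Galerkin) argument in the class $L^\infty H^2\cap L^2H^3 \times L^\infty H^1\cap L^2 H^2$. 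Here we use that $\rho_0$ is bounded away from zero, so the capillarity operator is uniformly elliptic in the parabolic sense after the change of variable revealing the skew-adjoint structure, and $\mu(\rho)$ is bounded below. Measurability and adaptedness of the resulting process then follow from uniqueness, for instance via a Yamada--Watanabe type argument or by construction as a limit of adapted approximations.

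To globalize, derive a priori estimates that are pathwise on $[0,T]$ but whose constants may depend on $\omega$ through $\sup_{t\le T}\|\sigma W\|_{H^3}$.
\begin{enumerate}
\item[(i)] The energy estimate, obtained by applying It\^o's formula to $\int \tfrac12\rho u^2 + P(\rho) + \tfrac12 k(\rho)|\partial_x\rho|^2$. The damping only helps.
\item[(ii)] The BD entropy estimate for the effective velocity $V = u + \mu(\rho)\partial_x\rho/\rho^2$. The capillarity cross term gives a sign-definite or controllable contribution, bounding $\int_0^T\|\partial_{xx}\rho^{(\alpha+\beta+1)/2}\|^2$-type quantities, and it is exactly here that \eqref{SCC} is needed.
\item[(iii)] Upper and lower bounds on $\rho$. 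Combining (i)--(ii) with the 1D embedding, bound $\|\partial_x \phi(\rho)\|_{L^2}$ for suitable powers $\phi$ and use the mass normalization. This gives $\rho\le C$ and, under \eqref{NV}, $1/\rho\le C$. For instance, if $\beta\le -2$ the capillarity energy controls $\partial_x\rho^{(\beta+2)/2}$, hence $\log\rho$ or negative powers; if $\alpha\le 1/2$ the BD term controls $\partial_x \rho^{\alpha-1/2}$.
\item[(iv)] Higher-order estimates, through a Gronwall argument in $H^2\times H^1$ using the now-uniform bounds on $\rho$ and $1/\rho$.
\end{enumerate}
A continuation argument then yields the global solution in the stated class, $\mathbb{P}$-a.s.

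The main obstacle is step (iii) together with the BD cross term in (ii) for general $\beta$. The capillarity contributions must be reorganized, using \eqref{partial x K}, as an exact derivative plus a coercive term, and this requires the precise range \eqref{SCC}. I would first try to follow the computation of \cite{Le Floch}, adapted with the It\^o correction terms $\tfrac12\int\rho|\sigma|^2$, which are harmless because $\sigma\in H^3$. The stochastic integrals would be handled pathwise via the transformed variable rather than by taking expectations. Uniqueness follows from a Gronwall estimate on the difference of two strong solutions, which is legitimate since both have densities bounded above and below.
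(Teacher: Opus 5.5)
Your outline matches the paper's, which only sketches this theorem (in the remark after it and in the pathwise estimates of Sections \ref{Sec3}--\ref{Sec4}). The shared steps are the reduction to a random PDE via $\tilde u=u-\sigma W$, the BD entropy under \eqref{SCC}, vacuum control under \eqref{NV}, higher-order estimates in the variables $(A(\rho),\tilde u)$, continuation, and a Gronwall-type uniqueness argument.

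The genuine difference is the level at which the local theory lives. The paper builds smooth local solutions with a blow-up alternative in $\|u\|_{W^{2,\infty}}$ and $\|\rho\|_{W^{2,\infty}}$, and bootstraps to $L^\infty_tH^s_x$ with $s>3$. It reaches the stated $H^2\times H^1$ class only afterwards, by approximating the data and using a stochastic compactness argument. You instead run local existence and continuation directly in $H^2\times H^1$. This saves the compactness step, but it makes the local theory the delicate part: at this regularity the estimates close only through the cancellation \eqref{sum with cancellation}, so your fixed-point or Galerkin scheme must preserve it.

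The change of variables does not make the capillarity "elliptic", as your sketch suggests. The $(A,\tilde u)$ coupling is skew-adjoint, i.e. dispersive. The gain $\rho\in L^2H^3$ comes from its interplay with viscosity, through the BD dissipation and \eqref{dissip A}.

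Three points in your sketch should be sharpened accordingly.
\begin{enumerate}
\item In (iv), bounds on $\rho$ and $1/\rho$ are not what makes the Gronwall inequality linear. It is the time-integrable coefficient $\|\partial_{xx}\rho^{\frac{\alpha+\beta+1}{2}}\|_{L^2}^2$ from (ii), which controls $\|\partial_x A(\rho)\|_{L^2}$; see the bound on $I_1$ in Proposition \ref{Prop path HS est}. Without it, the term $\|\partial_x A\|_{L^2}^2\|\partial_x\tilde u\|_{L^\infty}$ only yields a superlinear, hence local, inequality.
\item The bound $\rho\in L^2H^3$ in the stated class does not come out of the $(A,\tilde u)$ energy identity, whose only dissipation is $\int\rho^{\alpha-1}|\partial_{xx}\tilde u|^2$. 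It needs the separate argument behind \eqref{dissip A}.
\item The uniqueness estimate must be carried out at the $H^1\times L^2$ level in the variables $(\rho,u,A(\rho))$, as the relative entropy \eqref{Relative entropy} does, so that the third-order capillarity terms cancel. A plain $L^2$ estimate in $(\rho,u)$ loses derivatives. At the $H^2\times H^1$ level, the coefficient-difference terms would require a third derivative of one of the two velocities, which the class does not provide.
\end{enumerate}
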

\begin{remark}
For completeness, we highlight the main elements of the proof of Theorem \ref{Thm well posedness}:
\\
\begin{itemize}
\item [(1)] Local well-posedness of strong pathwise solutions is established for any $\gamma \ge 1, \, \alpha \ge 0, \, \beta \in \mathbb{R}$ by using standard approximating schemes which appropriately truncate the non-linearities. A blow-up alternative then occurs in terms of both $\| u \|_{W_x^{2,\infty}}$ and $\| \rho \|_{W_x^{2,\infty}}.$
\\
\item [(2)] The \eqref{SCC} characterizes the range in which the quantity $ -\int_{\mathbb{T}} \partial_x \mathcal{K} \cdot \mu(\rho) \dfrac{\partial_x \rho}{\rho^2} dx $ is non-negative. This is of crucial importance in the BD entropy estimate, see Proposition \ref{Prop entropy ineq Q/2} and Proposition \ref{Prop path BD entropy} determining estimates for $\| \partial_x \rho^{\alpha-\frac{1}{2}} \|_{L_t^\infty L_x^2}$ and $\| \partial_{xx} \rho^{\frac{\alpha+\beta+1}{2}} \|_{L_{t,x}^2}.$ In particular, the former bound allows to control vacuum regions, while the latter is the starting point of a bootstrap argument determining an ${L_t^\infty H_x^s}$ for $s>3$ estimate. Thus, the unique local-in-time solution can be extended to a global one, provided that \eqref{SCC} and \eqref{NV} hold.
\\
\item [(3)] The regularity class of global strong solutions can be reduced to the one of Theorem \ref{Thm well posedness} by assuming weaker regularity on the initial data, together with a stochastic compactness argument. 
\end{itemize}
\end{remark}
\subsection{Markovian properties of compressible fluids}
We introduce the Markovian framework and we recall some basic definitions that will be used in the sequel. We also provide a description of structural properties of compressible fluid models which determines a non-trivial scenario for the analysis of the Markovian setting. We start with the introduction of the phase space $\mathcal{X}$ and with the definitions of transitions functions and Markov semigroup for system \eqref{main system}.
\\
\\
By virtue of the regularity class of solutions given by Theorem \ref{Thm well posedness}, we consider the following phase space.
\begin{equation}
\mathcal{X}= \bigg\{ (\rho,u) \in H^2(\mathbb{T}) \times H^1(\mathbb{T}) \; : \; \int_{\mathbb{T}} \rho(x)dx=1,  \; \rho >0 \bigg\}.
\end{equation}
Note however that,  due to the open condition $\rho>0,$ the phase space $\mathcal{X}$ is not complete when endowed with the natural $H^2(\mathbb{T}) \times H^1(\mathbb{T})$ metric $( \mathcal{X}_{2,1}).$ On the other hand the appropriate topology which is reflecting the continuous dependence with respect to the initial data turns out the be the one induced by the $H^1(\mathbb{T}) \times L^2(\mathbb{T})$  metric, namely $\mathcal{X}_{1,0},$ which is still not complete. This will be pointed out in Proposition \ref{continuous dependence on the initial data} and it is related to some structural properties of general deterministic and stochastic compressible fluid equations which we highlight below.
\begin{remark}\label{remark structural prop compressible}
\leavevmode
\begin{enumerate}[label=\arabic*), labelindent=0pt]
\item 
It is not known whether solutions within the class of finite energy/entropy are unique. In order to get uniqueness one usually consider an initial datum $u_0 \in H^1$ and therefore by assuming also that the initial density $\rho_0$ is bounded away from zero,  global existence and uniqueness of strong solution can be obtained.  This regularity reflects the structure of the phase space for compressible fluid flows.
\\
\item 
In the case of Korteweg fluids, the momentum equation involves three derivatives on the density. Since the continuity equation yields loss of  derivatives with respect to the velocity $u,$ then the derivation of an $H^1$ estimate for the velocity $u$ is non-trivial and it is obtained by virtue of a suitable change of variables which determines a skew-adjoint structure of \eqref{main system}. This method provides several fundamental cancellations between high-order derivative terms and, as highlighted in the proof of Proposition \ref{Prop path HS est}, it couples the $H^1$ estimate on $u$ with an $H^2$ estimate for $\rho.$ 
\\
\item 
The continuous dependence with respect to the initial data is usually obtained by means of relative energy/entropy functionals, together with an appropriate control on the vacuum set. This functionals are defined through first order Taylor expansions of the kinetic and internal energy (see also \cite{Bresch0} for a different approach). Therefore they exhibit a mismatch in the degree of derivatives with respect to the considered phase space. In particular, as exploited in \eqref{def energy}, we note that for Korteweg fluids the energy $H(\rho,u)$ depends on $\| \rho \|_{H^1}$ and $\| \sqrt{\rho} u \|_{L^2}.$ 
\end{enumerate}
\end{remark}
For these reasons, although the definitions stated below can be formulated for a generic metric space $(\mathcal{X},d),$ we present them directly with respect to $\mathcal{X}_{1,0}.$ It is important to mention that since $\mathcal{X}_{2,1}$ is an open subset of a Polish space, then it is also Polish. Therefore since the inclusion $i: H^2 (\mathbb{T})\times H^1(\mathbb{T})  \hookrightarrow H^1 (\mathbb{T})\times L^2 (\mathbb{T})$ is continuous and injective, then the two topologies generate the same Borel $\sigma-$ algebra on the space $\mathcal{X}.$ Moreover, the test functions in the definition of invariant measure, Definition \ref{def inv meas}, can be taken in $C_b(\mathcal{X}_{1,0})$ instead of $\mathcal{M}_{b}(\mathcal{X}_{1,0})$. We refer the reader to \cite[Remark 5.1]{G-Q Chen},  for a similar scenario.
\begin{definition}
Given $(\rho(t;\rho_0),u(t;u_0))$ the unique strong solutions of \eqref{main system} with deterministic initial data $(\rho_0,u_0),$ the transition functions $\mathcal{P}_t$ are defined by 
\begin{equation*}
\mathcal{P}_t(\rho_0, u_0, B)= \mathbb{P}((\rho(t; \rho_0), u(t;u_0) \in B),
\end{equation*}
with related Markov semigroup
\begin{equation*}
\mathcal{P}_t \phi(\rho_0,u_0)= \mathbb{E} \phi(( \rho(t;\rho_0), u(t;u_0)), \quad \quad \phi \in \mathcal{M}_b(\mathcal{X}_{1,0}).
\end{equation*}
\end{definition}
\noindent
Note that by virtue of the well-posedness result given by Theorem \ref{Thm well posedness}, then $\mathcal{P}_t$ is well defined and it satisfies the usual semigroup properties 
$$ \mathcal{P}_0= \mathbb{I}_{\mathcal{M}_b(\mathcal{X}_{1,0})}, \qquad \mathcal{P}_{t+s}=\mathcal{P}_t \circ \mathcal{P}_s.$$
Our goal is to prove the existence of an invariant probability measure. The precise definition is stated as follows:
\begin{definition}
Let $\mathcal{P}_t$ be the Markov semigroup associated to system \eqref{main system}. A measure $\nu \in \mathfrak{B}(\mathcal{X}_{1,0})$ is said to be invariant under $\mathcal{P}_t$ if 
\begin{equation}\label{def inv meas}
\int_{\mathcal{X}} \mathcal{P}_t \phi \, d \nu = \int_{\mathcal{X}} \phi\, d \nu, \quad \forall \phi \in C_b(\mathcal{X}_{1,0}).
\end{equation}
for all $t \ge 0.$
\end{definition}
\noindent
A fundamental property for the study of the existence of invariant measures is the following:
\begin{definition}(Feller property)
\\
The Markov semigroup $\mathcal{P}_t$ associated to system \eqref{main system} is said to be \textit{Feller} if the following property holds
\begin{equation}
 \mathcal{P}_t: C_b( \mathcal{X}_{1,0}) \longrightarrow C_b( \mathcal{X}_{1,0}),
\end{equation}
for all $t \ge 0.$
\end{definition}
\subsection{Comparison with the zero-capillarity case}
In \cite{Coti}, the authors investigated the existence of invariant measures for the one-dimensional compressible Navier-Stokes equation in the unit interval $(0,1)$ with zero Dirichlet boundary condition. We first note the extension to the one-dimensional flat torus can be obtained by introducing the damping term $-\varepsilon \rho u$ in the momentum equation. In \cite{Coti} the following phase space is considered:
\begin{equation*}
\mathcal{X}^{NS}= \bigg\{ (\rho,u) \in H^1(0,1) \times H_0^1(0,1) \; : \; \int_{0}^{1} \rho(x)dx=1,  \; \rho >0 \bigg\},
\end{equation*}
and the analysis is carried over to $\mathcal{X}^{NS}$ endowed with the topology induced by the $L^2 \times L^2$ metric, which we denote by $\mathcal{X}^{NS}_{0,0}.$ Note that in this case the energy $H(\rho,u)= \int_{0}^{1} \frac{1}{2} \rho u^2+ \rho \log \rho \, dx$ does not control the gradient of the density. 
On one hand, the space $\mathcal{X}^{NS}_{0,0}$ is the appropriate metric space to apply the stochastic compactness argument, on the other hand the proof of the Feller property $\mathcal{P}_t: C_b( \mathcal{X}^{NS}_{0,0}) \longrightarrow C_b( \mathcal{X}^{NS}_{0,0}),$ seems to fail because of the vacuum set.  To overcome this problem in \cite{Coti} the following class is introduced:
\begin{definition}\label{def G}
A scalar measurable function $\phi \in M_b(\mathcal{X}^{NS}_{0,0})$ belongs to the class $\mathcal{G}$ if the following convergence result holds:
\begin{equation}
\lim_{n \rightarrow \infty} \phi(\rho^n, u^n)= \phi(\rho,u),
\end{equation}
provided that 
\begin{equation}\label{hp G}
\{ (\rho^n,u^n) \}_{n \in \mathbb{N}} \subset \mathcal{X}^{NS}_{0,0} \quad \lim_{n \rightarrow \infty} (\rho^n, u^n)= (\rho,u) \; \text{in} \; \mathcal{X}^{NS}_{0,0}, \quad \sup_{n \in \mathbb{N}} \mathcal{E} (\rho^n, u^n) < \infty.
\end{equation}
\end{definition}
\noindent
The entropy functional $\mathcal{E}(\rho,u)$ is the analogous of \eqref{BD Def} in the case of constant viscosity, linear pressure and zero-capillarity. On the one hand, it holds that $C_b(\mathcal{X}^{NS}_{0,0}) \subset \mathcal{G} \subset C_b(\mathcal{X}^{NS}_{1,1}),$ the class $\mathcal{G}$ is invariant under the Markov semigroup $\mathcal{P}_t$ and the weak convergence of the time-averaged measures is compatible with test functions in $\mathcal{G},$ see \cite[Lemma 3.6]{Coti}. On the other hand, it is not clear if the Markov semigroup is Feller since the notion of sequential continuity given by the class $\mathcal{G}$ is not induced by a metrizable topology.  
%
%
In the case of Korteweg fluids the capillarity term provides a regularizing effects. In particular, since the relative entropy functional of system \eqref{main system} involves also the gradient of the density, see \eqref{Relative entropy} and \eqref{def A(rho)} for the precise definition, the entropy constraint given by condition \eqref{hp G} is satisfied whenever the sequence $(\rho^n,u^n)$ is strongly convergent in $\mathcal{X}_{1,0}.$ In contrast, the proof of the stability result Proposition \ref{continuous dependence on the initial data} is significantly more involved, since it accounts for the highly non-linear structure of the Korteweg tensor $\mathcal{K}$ with general capillarity exponent $\beta$ and also the case of density depending viscosity with $\alpha \ge 0$ is considered.
\section{A priori estimates}\label{Sec3}
\noindent
This section is devoted to the derivation of the a priori estimates that will be used in order to prove the main result of this paper. In particular, we provide the energy and the BD entropy inequalities.  Furthermore, we use an appropriate change of variable which allows us to reduce system \eqref{main system} to a random PDE providing pathwise estimates. 
\\
\\
For a given smooth solution $(\rho,u)$ of \eqref{main system} we define the following energy functional 
\begin{equation}\label{def energy}
H(\rho,u)= \int_{\mathbb{T}} \bigg( \dfrac{1}{2} \rho u^2 + h(\rho)+ \dfrac{k(\rho)}{2}| \partial_x \rho |^2 \bigg) dx
\end{equation}
with 
\begin{equation*}
h(\rho)= \begin{cases} a^2 \dfrac{\rho^\gamma}{\gamma-1}, \quad \gamma> 1, \\
a^2 \rho \log \rho, \quad \gamma=1.
\end{cases} 
\end{equation*}
Then we introduce the following quantity which plays a fundamental role in our analysis
\begin{equation}\label{BD Def}
\mathcal{E}(\rho,u)=H(\rho,u)+\dfrac{1}{2} \int_{\mathbb{T}} \bigg( \mu(\rho) \dfrac{\partial_x \rho \, u}{\rho}+ \mu^2(\rho) \dfrac{\partial_x \rho^2}{2\rho^3} \bigg) dx+  \int_{\mathbb{T}} I(\rho \, ; \alpha \, ; \varepsilon) dx,
\end{equation}
where we denote by 
\begin{equation*}
I(\rho \, ; \alpha \, ; \varepsilon)=\dfrac{\varepsilon \rho^\alpha }{2\alpha(\alpha-1)}, \quad I(\rho \, ; 1 \, ; \varepsilon)=\dfrac{\varepsilon  \rho \log \rho}{2}, \quad  I(\rho \, ; 0 \, ; \varepsilon)=  - \dfrac{\varepsilon \log \rho}{2}.
\end{equation*}
and we observe that 
\begin{equation*}
I(\rho \, ; \alpha \, ; \varepsilon) > 0 \quad \text{if} \; \alpha > 1, \quad \int_{\mathbb{T}} I(\rho \, ; \alpha \, ; \varepsilon) = \int_{\mathbb{T}} I(\rho \, ; \alpha \, ; \varepsilon) -\rho+1 \,  dx \ge 0 \quad \text{if} \; \alpha \in [0,1].
\end{equation*}
We highlight that $\mathcal{E}(\rho,u)$ is related to the effective velocity $V=u+\mu(\rho) \frac{\partial_x \rho}{\rho^2}.$ In particular, the second term in the right hand side of \eqref{BD Def} can be expressed in terms of $Q:= \mu(\rho) \frac{\partial_x \rho}{\rho^2}$ as follows
\begin{equation}\label{entropy with Q}
\mathcal{E}(\rho,u)=\frac{1}{2}H(\rho,u) +\frac{1}{2}\int_{\mathbb{T}} \bigg( \dfrac{1}{2} \rho \big| u+ Q \big|^2+ h(\rho)+ \dfrac{k(\rho)}{2}| \partial_x \rho |^2+2 I(\rho \, ; \alpha \, ; \varepsilon) \bigg) dx.
\end{equation} 
and we observe that the quantity 
\begin{equation*}
\int_{\mathbb{T}} \bigg( \dfrac{1}{2} \rho \big| u+ Q \big|^2+ h(\rho)+ \dfrac{k(\rho)}{2}| \partial_x \rho |^2 \bigg) dx
\end{equation*}
is exactly the BD Entropy used in several contributions \cite{D.P.S.}, \cite{D.P.S.2}, \cite{Mellet}, \cite{Const}, \cite{Pesc}.
The decomposition \eqref{entropy with Q} will be very useful in the derivation of the BD entropy estimate, Proposition \ref{Prop entropy ineq Q/2}.
We start our analysis with the following energy inequality which is obtained by using standard It\^{o} calculus.
\begin{proposition}\label{Prop energy ineq}
Let $(\rho,u)$ be a solution of \eqref{main system} with initial condition $(\rho_0,u_0) \in \mathcal{X}.$ Then the following energy inequality holds
\begin{equation}\label{energy inequ}
\mathbb{E} H(\rho,u)(t) + \mathbb{E} \int_{0}^{t} \| \rho^{\frac{\alpha}{2}} \partial_x u \|^2_{L^2} ds +\mathbb{E} \int_{0}^{t} \varepsilon \| \sqrt{\rho}u \|^2_{L^2}ds \le H(\rho_0,u_0) +\dfrac{1}{2} \| \sigma \|^2_{L^\infty} t, \quad 	\forall t\ge0,
\end{equation}
\end{proposition}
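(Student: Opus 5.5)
The plan is to apply It\^{o}'s formula to the energy functional $H(\rho,u)$ along the strong solution given by Theorem \ref{Thm well posedness}. We work with the momentum $m=\rho u$ and write the kinetic part as $\frac{1}{2}|m|^2/\rho$. The potential parts $\int h(\rho)\,dx$ and $\int \frac{k(\rho)}{2}|\partial_x\rho|^2\,dx$ depend only on $\rho$, and $\rho$ solves the continuity equation with no noise. Their time derivatives are therefore classical:
\[
\frac{d}{dt}\int h(\rho)\,dx=-\int \partial_x(\rho u)\,h'(\rho)\,dx=-\int u\,\partial_x p(\rho)\,dx,
\]
using $\rho h''(\rho)=p'(\rho)$. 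For the capillarity energy, differentiating and integrating by parts gives $\int u\,\partial_x\mathcal{K}\,dx$, by the identity \eqref{partial x K} together with $\partial_t\rho=-\partial_x(\rho u)$.

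For the kinetic part we use the It\^{o} formula for $F(\rho,m)=\int\frac{|m|^2}{2\rho}dx$:
\[
dF=\int u\,dm-\int\frac{u^2}{2}\partial_t\rho\,dx\,dt+\frac{1}{2}\int\frac{1}{\rho}\sum_k|\rho\sigma_k|^2dx\,dt .
\]
We substitute the momentum equation. The convective terms combine with $-\frac{u^2}{2}\partial_t\rho$ and cancel. The pressure term gives $+\int u\,\partial_x p\,dx$, which cancels against the internal energy contribution. The capillarity term gives $\int u\,\partial_x\mathcal{K}\,dx$ with a sign opposite to the capillarity energy derivative, so these cancel as well. The viscous term yields $-\int\rho^\alpha|\partial_x u|^2\,dx$, and the damping yields $-\varepsilon\int\rho u^2\,dx$. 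What remains is the martingale $\int\rho u\,\sigma\,dW$ and the It\^{o} correction $\frac{1}{2}\int\rho\sum_k\sigma_k^2\,dx\le\frac{1}{2}\|\sigma\|^2_{L^\infty}\int\rho\,dx=\frac{1}{2}\|\sigma\|_{L^\infty}^2$. This last bound uses mass conservation: $\int\rho\,dx=1$ for all times, since $(\rho_0,u_0)\in\mathcal{X}$.

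Integrating in time and taking expectations removes the stochastic integral, provided it is a genuine martingale; this gives \eqref{energy inequ}, in fact with equality.

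The main technical obstacle is justifying both the It\^{o} formula for this nonlinear functional and the martingale property. We plan the following argument.
\begin{itemize}
\item By Theorem \ref{Thm well posedness}, $\rho$ is bounded above and away from zero and $u\in L^\infty_tH^1_x$ almost surely. This makes all the integrations by parts above legitimate, because the strong regularity ($\rho\in L^2H^3$, $u\in L^2H^2$) pairs the third-order Korteweg term against $u$.
\item For the martingale property, we introduce stopping times $\tau_R=\inf\{t:\|u\|_{H^1}+\|\rho\|_{H^2}+\|\rho^{-1}\|_{L^\infty}\ge R\}$. On $[0,\tau_R]$ the integrand $\int\rho u\sigma_k\,dx$ is bounded, so the stopped stochastic integral has zero expectation.
\item Since the integrand is nonnegative, we let $R\to\infty$ using $\tau_R\to\infty$ almost surely (global existence) and Fatou's lemma on the left-hand side.
\end{itemize}
Passing to the limit with Fatou's lemma turns the identity into the stated inequality.

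Alternatively, the computation can be done pathwise on the random PDE obtained via $\tilde u=u-\sigma W$, but the stopping-time argument above is more direct.
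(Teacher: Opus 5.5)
Your proposal is correct and follows essentially the same route as the paper. It applies It\^{o}'s formula to $\rho u^2=(\rho u)^2/\rho$, cancels the convective, pressure and capillarity contributions via the continuity equation and \eqref{partial x K}, drops the stochastic integral in expectation, and bounds the It\^{o} correction $\frac12\int_{\mathbb{T}}\rho\sum_k|\sigma_k|^2dx\le\frac12\|\sigma\|^2_{L^\infty}$ by mass conservation. Your stopping-time and Fatou localization is a useful justification of the martingale step, which the paper simply asserts. Note only that your intermediate signs are flipped in pairs: in fact $\frac{d}{dt}\int_{\mathbb{T}} h(\rho)\,dx=+\int_{\mathbb{T}} u\,\partial_x p(\rho)\,dx$, the momentum equation contributes $-\int_{\mathbb{T}} u\,\partial_x p(\rho)\,dx$, and $\frac{d}{dt}\int_{\mathbb{T}}\frac{k(\rho)}{2}|\partial_x\rho|^2dx=-\int_{\mathbb{T}} u\,\partial_x\mathcal{K}\,dx$; this does not affect the cancellations.
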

\begin{proof}
We start by observing that 
\begin{equation*}
\begin{split}
\text{d}(\rho u^2) & =\text{d} \bigg(\dfrac{1}{\rho}(\rho u)^2 \bigg)=- \partial_t \rho u^2 \text{d}t+ \dfrac{1}{\rho} \bigg( 2 \rho u  \text{d}(\rho u)+ \text{d}(\rho u) \text{d}(\rho u) \bigg) \\ & = \partial_x (\rho u) u^2 \text{d}t + 2u \bigg( \partial_x \mathcal{S} + \partial_x \mathcal{K} -\varepsilon \rho u -\partial_x (\rho u^2) -\partial_x p(\rho) \bigg) \text{d}t \\ & + 2 \sum_{l=1}^{\infty}\rho u \sigma_l \text{d}W^l+ \sum_{l=1}^{\infty} \rho | \sigma_l|^2 \text{d}t,
\end{split}
\end{equation*}
which after integration by parts reads
\begin{equation*}
\begin{split}
& \text{d} \bigg( \dfrac{1}{2} \int_{\mathbb{T}} \rho u^2 dx \bigg) + \| \rho^{\frac{\alpha}{2}} \partial_x u \|^2_{L^2}+ \varepsilon \| \sqrt{\rho} u \|^2_{L^2}= -a^2 \int_{\mathbb{T}} \partial_x \rho^\gamma u dxdt \\ & -\int_{\mathbb{T}} \partial_x \mathcal{K} u dx+ \sum_{l=1}^{\infty} \int_{\mathbb{T}} \rho u \sigma_l dx \text{d}W^l+ \dfrac{1}{2} \sum_{l=1}^{\infty} \int_{\mathbb{T}} \rho | \sigma_l|^2 dx \text{d}t.
\end{split}
\end{equation*}
Then we notice that using the continuity equation in \eqref{main system} and again integrating by parts we have 
\begin{equation*}
a^2 \int_{\mathbb{T}} \partial_x \rho^\gamma u dx= \dfrac{d}{dt} \int_{\mathbb{T}}h(\rho)dx
\end{equation*}
and 
\begin{equation*}
\begin{split}
 \int_{\mathbb{T}} \partial_x \mathcal{K} u \, dx & = \int_{\mathbb{T}} \partial_x \bigg( \partial_x(k(\rho) \partial_x \rho)- \dfrac{k'(\rho)}{2} | \partial_x \rho |^2 \bigg) \rho u \, dx =  \int_{\mathbb{T}} \bigg( \partial_x(k(\rho) \partial_x \rho)- \dfrac{k'(\rho)}{2} | \partial_x \rho |^2 \bigg) \partial_t \rho \, dx \\ & = -\int_{\mathbb{T}} \bigg( k(\rho) \partial_x \rho \partial_t \partial_x \rho + \dfrac{k'(\rho)}{2} | \partial_x \rho |^2 \partial_t \rho \bigg) dx =- \dfrac{d}{dt} \int_{\mathbb{T}} k(\rho)\dfrac{ | \partial_x \rho |^2}{2}dx,
\end{split}
\end{equation*}
hence we end up with the following energy equality
\begin{equation}\label{energy equality}
\text{d}\mathcal{H}(\rho,u) + \| \rho^{\frac{\alpha}{2}} \partial_x u \|^2_{L^2}+ \varepsilon \| \sqrt{\rho} u \|^2_{L^2}=\sum_{l=1}^{\infty} \int_{\mathbb{T}} \rho u \sigma_l dx \text{d}W^l+ \dfrac{1}{2} \sum_{l=1}^{\infty} \int_{\mathbb{T}} \rho | \sigma_l|^2 dx \text{d}t.
\end{equation}
Finally,  we integrate in time and after having applied the expectation we observe that the first term in the right hand side of \eqref{energy equality} vanishes due to the martingale property. Furthermore, 
\begin{equation*}
\sum_{l=1}^{\infty} \int_{0}^{t} \int_{\mathbb{T}} \rho | \sigma_l|^2 dx \text{d}t \le \| \sigma \|^2_{L^\infty} t,
\end{equation*}
hence we deduce \eqref{energy inequ}.
\end{proof}
\noindent 
In the next Proposition we provide the BD entropy estimate. 
\begin{proposition}\label{Prop entropy ineq Q/2}
Let $(\rho,u)$ be a solution of \eqref{main system} with initial condition $(\rho_0,u_0) \in \mathcal{X}.$ Assume that \eqref{SCC} is satisfied, then the following entropy inequality holds
\begin{equation}\label{entropy ineq Q/2}
\begin{split}
& \mathbb{E} \mathcal{E}(\rho,u)(t)+  \mathbb{E}\int_{0}^{t} \mathcal{D}[\rho,u] ds  \le \mathcal{E}(\rho_0,u_0) +\dfrac{1}{2} \| \sigma \|^2_{L^\infty} t, \quad 	\forall t\ge0,
\end{split}
\end{equation}
where
\begin{equation}\label{entropy dissipation}
\begin{split}
\mathcal{D}[\rho,u]= & \frac{2\gamma a^2}{(\gamma+ \alpha-1)^2}\| \partial_x \rho^{	\frac{\gamma+\alpha-1}{2}} \|^2_{L^2}+ \frac{1}{2} \| \rho^{\frac{\alpha}{2}} \partial_x u \|^2_{L^2} + \varepsilon \| \sqrt{\rho} u \|^2_{L^2}  \\ & + \frac{2}{(\alpha+\beta+1)^2}\| \partial_{xx} \rho^{\frac{\alpha+ \beta+1}{2}} \|^2_{L^2} + \frac{c(\alpha,\beta)}{2} \| \partial_x \rho^{\frac{\alpha+\beta+1}{4}} \|^4_{L^4}
\end{split}
\end{equation}
for a positive constant $c(\alpha,\beta)$ defined by 
\begin{equation}\label{def c(alpha,beta)}
c(\alpha,\beta)= \dfrac{64}{(\alpha+\beta+1)^2} \bigg[ \dfrac{(\alpha-\beta-1)(1-\alpha)}{(\alpha+\beta+1)^2}-\dfrac{\beta}{3(\alpha+\beta+1)} \bigg] 
\end{equation}
\end{proposition}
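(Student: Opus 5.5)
The plan is to use the decomposition \eqref{entropy with Q}: $\mathcal{E}=\frac12 H+\frac12\big(\text{BD entropy}\big)+\int_{\mathbb{T}} I\,dx$. Proposition \ref{Prop energy ineq} already handles the $\frac12 H$ part. Taking half of \eqref{energy equality} contributes $\frac12\|\rho^{\alpha/2}\partial_x u\|^2_{L^2}+\frac{\varepsilon}{2}\|\sqrt\rho u\|^2_{L^2}$ to the dissipation and $\frac14\sum_l\int\rho|\sigma_l|^2$ to the It\^o correction. What remains is an It\^o computation for the BD part $\frac12\int(\frac12\rho|u+Q|^2+h(\rho)+\frac{k(\rho)}2|\partial_x\rho|^2)dx$, with $V=u+Q$ and $Q=\rho^{\alpha-2}\partial_x\rho$.

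\textbf{Step 1: equation for $\rho V$.} From the continuity equation, $\partial_t(\rho Q)+\partial_x(\rho u Q)=-\partial_x(\rho^\alpha\partial_x u)$. I will check this directly, noting that $\rho Q=\partial_x\phi(\rho)$ with $\phi'(\rho)=\rho^{\alpha-1}$. Adding it to the momentum equation cancels the viscous term, which gives
$$\text{d}(\rho V)+\partial_x(\rho u V)\,\text{d}t+\partial_x p\,\text{d}t=\big(\partial_x\mathcal K-\varepsilon\rho u\big)\text{d}t+\rho\sigma\,\text{d}W.$$
Following the scheme of Proposition \ref{Prop energy ineq}, I then apply It\^o to $\frac12\rho V^2=\frac{(\rho V)^2}{2\rho}$ and integrate in space. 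The transport terms cancel, and the It\^o correction is again $\frac12\sum_l\int\rho|\sigma_l|^2$. The terms paired with $u$ reproduce the time derivatives of $\int h$ and $\int\frac k2|\partial_x\rho|^2$, exactly as before, together with $-\varepsilon\|\sqrt\rho u\|^2$. Three new terms come from pairing with $Q$:
\begin{itemize}
\item \emph{Pressure:} $\int\partial_x p\,Q=a^2\gamma\int\rho^{\gamma+\alpha-3}|\partial_x\rho|^2=\frac{4\gamma a^2}{(\gamma+\alpha-1)^2}\|\partial_x\rho^{\frac{\gamma+\alpha-1}{2}}\|^2_{L^2}$.
\item \emph{Damping:} $\varepsilon\int\rho uQ=\varepsilon\int\rho^{\alpha-1}u\partial_x\rho$. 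By the continuity equation this equals $-\frac{d}{dt}\int 2I(\rho;\alpha;\varepsilon)\,dx$ in each of the three cases for $\alpha$, which is exactly why $I$ is included in $\mathcal E$.
\item \emph{Capillarity:} $-\int\partial_x\mathcal K\,Q$, treated in Step 2.
\end{itemize}
Halving and summing with half the energy identity gives all the stated coefficients: $\frac{2\gamma a^2}{(\gamma+\alpha-1)^2}$, $\frac12\|\rho^{\alpha/2}\partial_x u\|^2$, $\varepsilon\|\sqrt\rho u\|^2$ (half from each part), and a total noise correction $\frac12\sum_l\int\rho|\sigma_l|^2\le\frac12\|\sigma\|^2_{L^\infty}$, using $\int\rho=1$. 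Taking expectations removes the martingale terms. This is rigorous for the strong solutions of Theorem \ref{Thm well posedness}, since $\rho$ is bounded away from zero there.

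\textbf{Step 2: the capillarity term (main obstacle).} I must show
$$-\int\partial_x\mathcal K\,Q\,dx\ \ge\ \frac{4}{(\alpha+\beta+1)^2}\|\partial_{xx}\rho^{\frac{\alpha+\beta+1}2}\|^2_{L^2}+c(\alpha,\beta)\|\partial_x\rho^{\frac{\alpha+\beta+1}4}\|^4_{L^4},$$
since these quantities are halved in $\mathcal D$. Using \eqref{partial x K}, $\rho Q=\partial_x\phi(\rho)$, and one integration by parts, the left side becomes $\int\big(\partial_x(k\partial_x\rho)-\frac{k'}2|\partial_x\rho|^2\big)\partial_x(\rho^{\alpha-1}\partial_x\rho)\,dx$.

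I then write everything in terms of $w=\rho^{s}$ with $s=\frac{\alpha+\beta+1}{2}$. Expanding, the integrand becomes a quadratic form in $\big(\rho^{\cdot}\partial_{xx}\rho,\ \rho^{\cdot}|\partial_x\rho|^2\big)$, i.e. a combination of $\rho^{\alpha+\beta-1}(\partial_{xx}\rho)^2$, $\rho^{\alpha+\beta-2}\partial_{xx}\rho|\partial_x\rho|^2$ and $\rho^{\alpha+\beta-3}|\partial_x\rho|^4$. The key identity I will use is that the cross term is a total derivative:
$$\int\rho^{m}\partial_{xx}\rho\,|\partial_x\rho|^2\,dx=-\frac m3\int\rho^{m-1}|\partial_x\rho|^4\,dx.$$
With it, the cross term can be traded for quartic terms. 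I then complete the square against $\|\partial_{xx}w\|^2=s^2\int(\rho^{s-1}\partial_{xx}\rho+(s-1)\rho^{s-2}|\partial_x\rho|^2)^2$, and the leftover quartic coefficient should be exactly $c(\alpha,\beta)$ times the factor converting $\rho^{\alpha+\beta-3}|\partial_x\rho|^4$ into $|\partial_x\rho^{s/2}|^4$. The final step is to verify that \eqref{SCC} makes this coefficient nonnegative. The $\frac{\beta}{3(\alpha+\beta+1)}$ term in \eqref{def c(alpha,beta)} comes from the integration-by-parts identity, which serves as a consistency check. I expect this algebra to be the delicate part, together with the degenerate cases $\alpha+\beta+1=0$ and $\alpha\in\{0,1\}$, which I would handle by continuity or by the analogous logarithmic computation.
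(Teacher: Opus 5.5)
Your architecture is the paper's: half the energy identity plus half the It\^o identity for $\frac12\rho V^2$ with $V=u+Q$, the pressure and damping pairings with $Q$, and an exact computation of the capillary pairing. Your cross-term identity and the completion of the square against $\|\partial_{xx}\rho^{s}\|^2_{L^2}$, $s=\frac{\alpha+\beta+1}{2}$, do reproduce \eqref{capillarity BD} as an identity, with exactly the constant of \eqref{def c(alpha,beta)}. There is a minor sign slip: $\varepsilon\int_{\mathbb{T}}\rho uQ\,dx=+\frac{d}{dt}\int_{\mathbb{T}}2I\,dx$. It is the right-hand-side term $-\varepsilon\int_{\mathbb{T}}\rho uQ\,dx$ that produces $+\frac{d}{dt}\int_{\mathbb{T}}2I\,dx$ on the left.

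\textbf{The gap.} Your last step fails: \eqref{SCC} does \emph{not} make $c(\alpha,\beta)$ nonnegative. Take $(\alpha,\beta)=(0,-4)$, which satisfies \eqref{SCC} and even lies in range (1) of Theorem \ref{main theorem}. Then $c(0,-4)=\frac{64}{9}\big(\frac13-\frac49\big)=-\frac{64}{81}$. More generally, for $\alpha=0$ the bracket in \eqref{def c(alpha,beta)} equals $-\frac{\beta+3}{3(\beta+1)}$, so $c<0$ for every $\beta\in[-4,-3)$. The positivity of $c$ asserted in the statement therefore cannot be proved, and the paper's own proof concedes that $c$ need not be positive.

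\textbf{The missing ingredient} is Bernis' inequality, which is what the paper uses. For positive periodic $f$, an integration by parts gives $\int_{\mathbb{T}}\frac{(\partial_x f)^4}{f^2}dx=3\int_{\mathbb{T}}\frac{(\partial_x f)^2\partial_{xx}f}{f}dx$. Cauchy--Schwarz then yields $\|\partial_x f^{1/2}\|^4_{L^4}\le\frac{9}{16}\|\partial_{xx}f\|^2_{L^2}$. Applied to $f=\rho^{s}$, this shows the capillary contribution is nonnegative whenever $c(\alpha,\beta)\ge-\frac{64}{9(\alpha+\beta+1)^2}$, that is,
\[
(\alpha-\beta-1)(1-\alpha)-\frac{\beta(\alpha+\beta+1)}{3}+\frac{(\alpha+\beta+1)^2}{9}\ge 0 .
\]
The left side is a concave quadratic in $\beta$ vanishing exactly at $\beta=2\alpha-4$ and $\beta=2\alpha-1$, so this condition is precisely \eqref{SCC}.

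\textbf{What the sign actually controls.} Your computation is an identity except for the bound $\sum_l\int_{\mathbb{T}}\rho|\sigma_l|^2dx\le\|\sigma\|^2_{L^\infty}$. So the sign of $c$ is not what produces \eqref{entropy ineq Q/2}. What \eqref{SCC} together with Bernis gives is $\mathcal{D}[\rho,u]\ge 0$. That is what makes the estimate meaningful, what justifies the usual localization and limit passage, and what the later arguments (pathwise Gronwall, Markov inequality in the tightness proof) rely on.
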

\begin{proof}
We consider the evolution in time of the effective velocity
\begin{equation}
V= u+ \mu(\rho) \dfrac{\partial_x \rho}{\rho^2}.
\end{equation}
In particular, $Q= \mu(\rho) \dfrac{ \partial_x \rho}{\rho^2}$
solves the following transport equation
\begin{equation}
\partial_t Q+ u \partial_x Q= - \rho^{-1} \partial_x(\mu (\rho) \partial_{x} u ),
\end{equation}
thus systems \eqref{main system} in the new variables $(\rho,V)$ reads 
\begin{equation}\label{ rho V main system}
\begin{cases}
\partial_t \rho+ \partial_x(\rho u)=0, \\ 
\text{d} (\rho V)+ [\partial_x (\rho u V) + \partial_x p(\rho)]\text{d}t= [ \partial_x \mathcal{K} -\varepsilon \rho u ]\text{d}t+ \rho \sigma(x) \text{d}W.
\end{cases}
\end{equation}
Then, with similar lines of argument with respect to the proof of Proposition \ref{Prop energy ineq}, we deduce
\begin{equation}\label{bd diff ineq}
\begin{split}
\text{d}(\rho V^2) & =\text{d} \bigg(\dfrac{1}{\rho}(\rho V)^2 \bigg)=- \partial_t \rho V^2 \text{d}t+ \dfrac{1}{\rho} \bigg( 2 \rho V  \text{d}(\rho V)+ \text{d}(\rho V) \text{d}(\rho V) \bigg) \\ & = \partial_x (\rho u) V^2 \text{d}t + 2V \bigg( \partial_x \mathcal{K} -\varepsilon \rho u -\partial_x (\rho u V) -\partial_x p(\rho) \bigg) \text{d}t \\ & + 2 \sum_{l=1}^{\infty}\rho V \sigma_l \text{d}W^l+ \sum_{l=1}^{\infty} \rho | \sigma_l|^2 \text{d}t.
\end{split}
\end{equation}
We integrate by parts and we focus on the new terms arising in \eqref{bd diff ineq}.  In particular, the pressure term can be estimated as follows
\begin{equation}\label{pressure bd}
\begin{split}
& \int_{\mathbb{T}} \partial_x \rho^\gamma Q \, dx= \gamma \int_{\mathbb{T}}\rho^{\gamma-1} \partial_x \rho \rho^{\alpha-2} \partial_x \rho \, dx = \dfrac{4\gamma}{(\gamma+\alpha-1)^2}\int_{\mathbb{T}} | \partial_x \rho^{\frac{\gamma+\alpha-1}{2}} |^2 \, dx
\end{split}
\end{equation}
and we highlight that by construction the quantity $\dfrac{\partial_x \rho^\theta}{\theta}= \partial_x \log \rho$ in the degenerate case $\theta=0.$
The contribution of the damping term can be written as 
\begin{equation}\label{damping bd}
\varepsilon \int_{\mathbb{T}}  \rho u Q \,dx = \dfrac{\varepsilon}{\alpha(\alpha-1)} \int_{\mathbb{T}} \rho^\alpha \, dx, \quad \alpha \neq 0, \quad \alpha \neq 1,
\end{equation}
while 
$$ \varepsilon \int_{\mathbb{T}}  \rho u Q \,dx =\varepsilon  \int_{\mathbb{T}}\rho \log \rho\,dx, \quad \quad \alpha=1  $$ and $$  \varepsilon \int_{\mathbb{T}}  \rho u Q \,dx = - \varepsilon \int_{\mathbb{T}}\log \rho \,dx , \quad \quad \alpha=0. $$
The analysis of the capillary term is more delicate and strongly relies on the strong coercivity condition \eqref{SCC}. To be precise we have
\begin{equation}\label{capillarity BD}
-\int_{\mathbb{T}} \partial_x \mathcal{K} \cdot Q \, dx = \frac{4}{(\alpha+\beta+1)^2} \int_{\mathbb{T}} | \partial_{xx} \rho^{\frac{\alpha+ \beta+1}{2}} |^2 dx  + c(\alpha, \beta) \int_{\mathbb{T}} |\partial_x \rho^{\frac{\alpha+\beta+1}{4}} |^4 dx,
\end{equation}
with $c(\alpha,\beta)$ as in \eqref{def c(alpha,beta)}.
Note that the constant $c(\alpha,\beta)$ is not necessarily positive. On the other hand,  by using Bernis inequality (see \cite[Proposition 2.8]{Alazard}) for the function $f=\rho^{\frac{\alpha+\beta+1}{2}},$ the right-hand side of \eqref{capillarity BD} is non-negative if and only if \eqref{SCC} holds. This dichotomy has been already provided in several contributions, see for instance \cite{Pesc}, \cite{Ant} and \cite{Le Floch}, therefore we omit the details here.
Collecting the estimates \eqref{pressure bd}-\eqref{capillarity BD} together, integrating in time and applying the expectation we end up with 
\begin{equation}\label{BD Entropy ineq Q}
\begin{split}
& \mathbb{E} \int_{\mathbb{T}} \bigg( \dfrac{1}{2} \rho V^2 + h(\rho)+ \dfrac{k(\rho)}{2}| \partial_x \rho |^2+ 2I(\rho \, ; \alpha \, ; \varepsilon) \bigg) dx +\mathbb{E} \int_{0}^{t}\int_{\mathbb{T}} \dfrac{4\gamma a^2}{(\gamma+\alpha-1)^2} | \partial_x \rho^{\frac{\gamma+\alpha-1}{2}} |^2 \, dx ds \\ & + \mathbb{E}\int_{0}^{t} \bigg[ \frac{4}{(\alpha+\beta+1)^2}\| \partial_{xx} \rho^{\frac{\alpha+ \beta+1}{2}} \|^2_{L^2} + c(\alpha,\beta) \| \partial_x \rho^{\frac{\alpha+\beta+1}{4}} \|^4_{L^4} \bigg] ds + \mathbb{E} \int_{0}^{t} \varepsilon \| \sqrt{\rho}u \|^2_{L^2}ds \\ & \le \int_{\mathbb{T}} \bigg( \dfrac{1}{2} \rho_0 V_0^2 + h(\rho_0)+ \dfrac{k(\rho_0)}{2}| \partial_x \rho_0 |^2+ 2I(\rho_0 \, ; \alpha \, ; \varepsilon) \bigg) dx +\dfrac{1}{2} \| \sigma \|^2_{L^\infty} t, \quad 	\forall t\ge0.
\end{split}
\end{equation}
Finally, we sum up the weighted inequalities \eqref{energy inequ} and \eqref{BD Entropy ineq Q} to deduce \eqref{entropy ineq Q/2}.
\end{proof}
\subsection{Pathwise estimates}
The next part of this Section is devoted to a collection of pathwise estimates.  The starting point of this analysis is the following change of variables
\begin{equation*}
\tilde{u}= u- w, \quad w= \sigma W,
\end{equation*}
which allows to rewrite system \eqref{main system} in terms of $(\rho, \tilde{u})$ satisfying the following random PDE
\begin{equation}\label{random pde}
\begin{cases}
\partial_ t \rho + \partial_x (\rho \tilde{u})= f(\rho,w),  \\
\partial_t (\rho \tilde{u})+ \partial_x(\rho \tilde{u}^2) + \partial_x p(\rho)= \partial_x \mathcal{\tilde{S}}+ \partial_x \mathcal{K}- \varepsilon \rho \tilde{u} + \tilde{u} f(\rho,w)+ \rho g(\rho,\tilde{u},w)
\end{cases}
\end{equation}
where $$ f(\rho,w)=-\partial_x (\rho w), \quad \tilde{S}=\mu(\rho) \partial_x \tilde{u}, \quad g(\rho, \tilde{u},w)=\dfrac{\partial_x(\mu(\rho) \partial_x w)}{\rho} -\varepsilon w-\partial_x(\tilde{u}w)-w \partial_x w,$$
having initial datum 
$(\rho_0,\tilde{u}_0)= (\rho_0,u_0).$
We start with the following pathwise BD entropy estimate. 
\begin{proposition}\label{Prop path BD entropy}
Let $(\rho,u)$ be a solution of \eqref{main system} with initial condition $(\rho_0,u_0) \in \mathcal{X}.$ Assume that \eqref{SCC} holds. Then, for any $T>0$ the following pathwise entropy estimate holds $\mathbb{P}-a.s.$
\begin{equation}\label{entropy ineq path}
\begin{split}
& \sup_{t \in [0,T]}  \mathcal{E}(\rho,u)(t) + \int_{0}^{T} \mathcal{D}[\rho,u]  dt  \le C( \sigma W, a, \mathcal{E}(\rho_0,u_0), T)
\end{split}
\end{equation}
and consequently 
\begin{equation}\label{upper rho path}
\sup_{t \in [0,T]}  \| \rho(t)\|_{L^\infty} \le C(\sigma W, a, \mathcal{E}(\rho_0, u_0), T).
 \end{equation}
Furthermore,  if condition \eqref{NV} is satisfied then the following pathwise lower bound on the density holds
\begin{equation}\label{lower rho path}
\sup_{t \in [0,T]}  \| \rho^{-1}(t) \|_{L^\infty} \le C(\sigma W, a, \mathcal{E}(\rho_0, u_0), T).
\end{equation}
\end{proposition}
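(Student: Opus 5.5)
The plan is to redo the computations of Proposition \ref{Prop energy ineq} and Proposition \ref{Prop entropy ineq Q/2} pathwise. We work on the random PDE \eqref{random pde} for $(\rho,\tilde u)$, $\tilde u=u-w$, so there are no stochastic integrals, and then close the resulting differential inequality by Gronwall's lemma. Since $\sigma\in H^3$ with values in $\ell^2$, we have $\mathbb{P}$-a.s. $w=\sigma W\in C([0,T];W^{2,\infty}(\mathbb{T}))$. This is the only pathwise quantity the constant will depend on.

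\textbf{Step 1: tilde energy and entropy.} Define $\tilde H=H(\rho,\tilde u)$ and $\tilde{\mathcal E}=\mathcal E(\rho,\tilde u)$ by replacing $u$ with $\tilde u$ in \eqref{def energy} and \eqref{BD Def}. Equivalently, set $\tilde V=\tilde u+Q$. Testing the momentum equation of \eqref{random pde} by $\tilde u$ and by $\tilde V$ gives the same structural terms as before. In particular, the capillarity contribution \eqref{capillarity BD} is nonnegative under \eqref{SCC}, and the pressure and damping terms produce $\mathcal D[\rho,\tilde u]$. The continuity equation now carries the source $f=-\partial_x(\rho w)$, i.e. the effective transport velocity is still $u=\tilde u+w$. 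Hence the extra terms are error terms that are linear in $w,\partial_x w,\partial_{xx}w$: the $\tilde u f$ and $\rho g$ terms, and the commutators arising when $\partial_t h(\rho)$, $\partial_t(k(\rho)|\partial_x\rho|^2/2)$ and $\partial_t I$ are computed with $u$ in place of $\tilde u$.

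\textbf{Step 2: bounding the error terms.} Each error term should be bounded by
\[
C\big(\|w\|_{W^{2,\infty}}\big)\big(1+\tilde{\mathcal E}\big)+\tfrac12\mathcal D[\rho,\tilde u].
\]
The typical terms are handled as follows. A term like $\int\rho\,\partial_x w\,\tilde u^2$ is controlled by $\|\partial_x w\|_\infty\tilde H$. The viscous term $\int\mu(\rho)\partial_x w\,\partial_x\tilde u$ is absorbed via Young's inequality into $\|\rho^{\alpha/2}\partial_x\tilde u\|^2$, plus $\int\rho^\alpha$. That last integral is bounded by $1+\int\rho^{\gamma}$ or by $\int I$ when $\alpha\le1$, and by $\int I$ when $\alpha>1$.

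The capillary commutator is where I expect the \textbf{main obstacle}. It reads
\[
\int k(\rho)\,\partial_x\rho\,\partial_x(\partial_x(\rho w))+\dots,
\]
and after integrating by parts it contains $\int k(\rho)\rho\,|\partial_x\rho|^2\partial_x w$ and $\int k(\rho)\,\partial_x\rho\,\rho\,\partial_{xx}w$-type terms. I would handle these through the identity
\[
\int k(\rho)|\partial_x\rho|^2\partial_x w\cdot(\text{const}),
\]
obtained by integrating by parts using $\partial_x(\rho w)=\rho\partial_x w+w\partial_x\rho$ and the exact cancellation of the $w\partial_x\rho$ transport part. The result is bounded by $\|w\|_{W^{2,\infty}}\big(\tilde H+\int k(\rho)\rho^{2}|\partial_x\rho|\dots\big)$. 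Any leftover is bounded by the $L^4$ term and the $\partial_{xx}\rho^{(\alpha+\beta+1)/2}$ dissipation when $c(\alpha,\beta)>0$. If $c(\alpha,\beta)$ is not positive, we use the Bernis inequality combination. Similarly, $Q$-dependent errors $\int\rho\tilde V\,\partial_x(\mu\partial_x w)/\rho$ are bounded by $\tilde{\mathcal E}$ and $\int\rho^{2\alpha-1}$, which is again controlled by the energy and $I$.

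\textbf{Step 3: Gronwall and conclusion.} Steps 1 and 2 give
\[
\frac{d}{dt}\tilde{\mathcal E}+\tfrac12\mathcal D\le C(w)(1+\tilde{\mathcal E}).
\]
Gronwall's lemma then yields the bound on $\sup_{t\le T}\tilde{\mathcal E}+\int_0^T\mathcal D$. Passing back to $u=\tilde u+w$ costs only $\|w\|_\infty^2$ times the mass, which gives \eqref{entropy ineq path}; the dissipation is converted back in the same way. For \eqref{upper rho path} we use mass one and the bound on $\int\rho^{-3}\mu^2|\partial_x\rho|^2=c\|\partial_x\rho^{\alpha-1/2}\|_{L^2}^2$, which follows from \eqref{entropy with Q} since $\rho Q^2\le 2\rho V^2+2\rho u^2$. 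If $\alpha>1/2$, the one-dimensional embedding gives $\rho^{\alpha-1/2}\le\int\rho^{\alpha-1/2}+\|\partial_x\rho^{\alpha-1/2}\|_{L^1}$, and both are bounded. If $\alpha\le 1/2$, we use the $k(\rho)|\partial_x\rho|^2$ energy term together with the $\partial_x\rho^{(\gamma+\alpha-1)/2}$ information instead. For \eqref{lower rho path}, when $\alpha<1/2$ the quantity $\rho^{\alpha-1/2}$ is bounded in $W^{1,2}\subset L^\infty$ by the same argument, giving the lower bound on $\rho$. When $\beta\le-2$, we instead use $\partial_x\rho^{(\beta+2)/2}\in L^2$ from $\int k(\rho)|\partial_x\rho|^2$: since $(\beta+2)/2\le0$, controlling $\rho^{(\beta+2)/2}$ (or $\log\rho$ when $\beta=-2$) in $L^\infty$ bounds $\rho^{-1}$. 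The case $\alpha=1/2$ uses $\partial_x\log\rho$ in the same way.
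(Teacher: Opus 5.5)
Your architecture is the paper's. You pass to the random PDE for $(\rho,\tilde u)$, test by $\tilde u$ and by $\tilde V=\tilde u+Q$, bound the remainder by $C(\|w\|_{W^{2,\infty}})(1+\mathcal{E})+\delta\,\mathcal{D}$, apply Gronwall, and return to $u=\tilde u+w$.

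\textbf{The gap.} The upper bound \eqref{upper rho path} is not established when $\alpha<\frac12$ and $\beta<-2$. This range is allowed by \eqref{SCC} and lies in case (1) of Theorem \ref{main theorem} (e.g.\ $\alpha=\frac14$, $\beta=-3$). There, $\int k(\rho)|\partial_x\rho|^2dx=\frac{4}{(\beta+2)^2}\|\partial_x\rho^{\frac{\beta+2}{2}}\|^2_{L^2}$ and $\|\partial_x\rho^{\alpha-\frac12}\|_{L^2}$ both involve negative powers of $\rho$, so they only bound $\rho$ from below. The term $\|\partial_x\rho^{\frac{\gamma+\alpha-1}{2}}\|^2_{L^2}$ appears only inside $\int_0^T\mathcal{D}\,dt$. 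It therefore yields at most $\int_0^T\|\rho\|^{\gamma+\alpha-1}_{L^\infty}dt<\infty$, never a bound on $\sup_{t\in[0,T]}\|\rho(t)\|_{L^\infty}$.

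\textbf{The fix.} Apply the fundamental theorem of calculus to $\rho^{\alpha}$ rather than to $\rho^{\alpha-\frac12}$. Since $\partial_x\rho^\alpha=\frac{\alpha}{\alpha-\frac12}\sqrt{\rho}\,\partial_x\rho^{\alpha-\frac12}$ and $\|\sqrt\rho\|_{L^2}=1$, you get $\|\partial_x\rho^\alpha\|_{L^1}\le C\|\partial_x\rho^{\alpha-\frac12}\|_{L^2}$. Integrating from a point $x_0$ with $\rho(x_0)=1$ then gives $\|\rho\|^{\alpha}_{L^\infty}\le 1+C\|\partial_x\rho^{\alpha-\frac12}\|_{L^2}$. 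The right-hand side is bounded uniformly in $t$ by \eqref{entropy ineq path}, for every $\alpha>0$ and either sign of $\beta+2$. This is the paper's argument \eqref{rho L infty alpha}. For $\alpha=0$, the same computation with $\rho^\theta$ gives $\|\rho\|^\theta_{L^\infty}\le 1+2\theta\|\rho\|^\theta_{L^\infty}\|\partial_x\rho^{-\frac12}\|_{L^2}$, and a small $\theta>0$ absorbs the last term.

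The same device, $\|\rho^\theta\|_{L^\infty}\le 1+\|\partial_x\rho^\theta\|_{L^1}$ for every $\theta\neq0$ because $\rho(x_0)=1$ at some $x_0$, is also what several of your other steps need.
\begin{itemize}
\item In your lower bound, the embedding $W^{1,2}\subset L^\infty$ implicitly uses the zero-order terms $\int\rho^{\alpha-\frac12}dx$ and $\int\rho^{\frac{\beta+2}{2}}dx$. These are negative moments not directly controlled by the energy, so the constant should come from $x_0$, not from an integral.
\item Your claim that $\int\rho^{2\alpha-1}dx$ is controlled by the energy and $I$ is false for $\alpha<\frac12$: at $\alpha=0$ it is $\int\rho^{-1}dx$, while $I$ only gives $-\int\log\rho\,dx$. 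The bound $\int\rho^{2\alpha-1}dx\le(1+C\|\partial_x\rho^{\alpha-\frac12}\|_{L^2})^2$ repairs it.
\item The capillary remainder you flag as the main obstacle needs neither dissipation nor Bernis's inequality. Up to sign it equals $\int\mathcal{K}\,\partial_x w\,dx=-\frac{3+\beta}{2}\int k(\rho)|\partial_x\rho|^2\partial_x w\,dx-\int\rho^{\beta+1}\partial_x\rho\,\partial_{xx}w\,dx$. The first term is at most $C\|\partial_x w\|_{L^\infty}\mathcal{E}$. For $\beta\neq-2$, the second is at most $C\|\partial_{xx}w\|_{L^\infty}\|\rho^{\frac{\beta+2}{2}}\|_{L^\infty}\|\partial_x\rho^{\frac{\beta+2}{2}}\|_{L^2}\le C\|\partial_{xx}w\|_{L^\infty}(1+\mathcal{E})$. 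This is exactly how the paper handles it via \eqref{tfc beta}.
\end{itemize}
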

\begin{proof}
We test \eqref{random pde} with $\tilde{u}$ and we obtain 
\begin{equation}\label{H path}
\dfrac{d}{dt} H(\rho, \tilde{u})+ \| \rho^{\frac{\alpha}{2}} \partial_x \tilde{u} \|^2_{L^2}+ \varepsilon \| \sqrt{\rho} u \|^2_{L^2}= \mathfrak{R}_1(\rho, \tilde{u},f,g),
\end{equation}
with 
\begin{equation*}
\begin{split}
\mathfrak{R}_1(\rho, \tilde{u}, f,g) & = \int_{\mathbb{T}} \bigg( \dfrac{1}{2} f \tilde{u}^2+ \dfrac{\gamma a^2}{\gamma-1} f \rho^{\gamma-1} +f \big[ \partial_x (k(\rho) \partial_x \rho)-\dfrac{k'(\rho)}{2} | \partial_x \rho |^2 \big] + \rho g \tilde{u} \bigg) dx.
\end{split}
\end{equation*}
Next we focus on the entropy $\mathcal{E}(\rho, \tilde{u}).$
We observe that in the new variables the transport term $Q= \mu(\rho)\frac{\partial_x \rho}{\rho^2}$ satisfies 
\begin{equation*}
\partial_t Q + \tilde{u} \partial_x Q= -\frac{1}{\rho} \partial_x (\mu(\rho) \partial_x \tilde{u})-w \partial_x Q- \frac{1}{\rho} \partial_x (\mu(\rho) \partial_x w)=-\frac{1}{\rho} \partial_x (\mu(\rho) \partial_x \tilde{u})+ \partial_x ( \rho^{\alpha-2} f),
\end{equation*}
thus for $\tilde{V}= \tilde{u}+ Q$ we can rewrite the system in the new variables $(\rho, \tilde{V})$ as follows
\begin{equation}\label{random pde V}
\begin{cases}
\partial_ t \rho + \partial_x (\rho \tilde{u})= f(\rho,w),  \\
\partial_t (\rho \tilde{V})+ \partial_x(\rho \tilde{u} \tilde{V}) + \partial_x p(\rho)= \partial_x \mathcal{K}- \varepsilon \rho \tilde{u} + \tilde{V} f(\rho,w)+ \rho g(\rho,\tilde{u},w)+ \rho r(\rho,w),
\end{cases}
\end{equation}
with 
\begin{equation*}
r(\rho, w)= \partial_x ( \rho^{\alpha-2} f).
\end{equation*}
Then we multiply \eqref{random pde V} by $\tilde{V}$ and again integrating by parts, using \eqref{entropy with Q} and summing up with \eqref{H path} we get 
\begin{equation}\label{path energy eq}
\dfrac{d}{dt} \mathcal{E}(\rho, \tilde{u}) + \mathcal{D}[\rho, \tilde{u}] = \mathfrak{R}(\rho, \tilde{u}, f,g,r),
\end{equation}
with 
\begin{equation*}
\begin{split}
\mathfrak{R}(\rho, \tilde{u}, f,g,r)  & = \int_{\mathbb{T}} \bigg( \dfrac{1}{2} f \tilde{u}^2+ \frac{1}{4} f Q^2+ \frac{1}{2} f \tilde{u} Q+ \dfrac{\gamma a^2}{\gamma-1} f \rho^{\gamma-1} +f \big[ \partial_x (k(\rho) \partial_x \rho)-\dfrac{k'(\rho)}{2} | \partial_x \rho |^2 \big]\bigg) dx \\ & + \int_{\mathbb{T}} \bigg( \dfrac{\varepsilon}{2(\alpha-1)} f \rho^{\alpha-1}+ \rho g \tilde{u} +\frac{1}{2}\rho g Q + \frac{1}{2}\rho r \tilde{u}+\frac{1}{2} \rho r Q \bigg) dx \\ & = \mathfrak{R}_1+ \mathfrak{R}_2.
\end{split}
\end{equation*}
Our goal now is to estimate $| \mathfrak{R}(\rho, \tilde{u}, f,g,r) |$ in terms of the left hand side of \eqref{path energy eq} and then to apply a deterministic Gronwall argument.  To start we observe that by definition of $r(\rho,w)$ we have 
\begin{equation*}
\begin{split}
\mathfrak{R}(\rho, \tilde{u}, f,g,r)  & = \int_{\mathbb{T}} \bigg( \dfrac{1}{2} f \tilde{u}^2+ \dfrac{\gamma a^2}{\gamma-1} f \rho^{\gamma-1} +f \big[ \partial_x (k(\rho) \partial_x \rho)-\dfrac{k'(\rho)}{2} | \partial_x \rho |^2 \big]\bigg) + \frac{1}{2} f \tilde{u} Qdx \\ & + \int_{\mathbb{T}} \bigg[ \frac{1}{2} \tilde{u} \partial_x f \rho^{\alpha-1} + \bigg( \frac{\alpha-2}{2} \bigg) \tilde{u} f Q+ \frac{1}{2} \rho^{2\alpha-3} \partial_x \rho \partial_x f + \bigg( \frac{2\alpha-3}{4} \bigg) f Q^2\bigg] dx \\ & + \int_{\mathbb{T}} \bigg( \dfrac{\varepsilon}{2(\alpha-1)} f \rho^{\alpha-1}+ \rho g \tilde{u} +\frac{1}{2}\rho g Q  \bigg) dx.
\end{split}
\end{equation*}
Then, after a straightforward computation we rewrite the above term as follows 
\begin{equation*}
\begin{split}
\mathfrak{R}(\rho, \tilde{u}, f,g,r)  & = -\dfrac{\gamma a^2}{\gamma-1} \int_{\mathbb{T}} \partial_x(\rho w) \rho^{\gamma-1}dx -\int_{\mathbb{T}} \partial_x(\rho w) \big[ \partial_x (k(\rho) \partial_x \rho)-\dfrac{k'(\rho)}{2} | \partial_x \rho |^2 \big]dx \\ & -\frac{1}{2} \int_{\mathbb{T}} \bigg[ \partial_x \rho \rho^{2\alpha-3} \partial_{xx} (\rho w) -\frac{3}{2} | \partial_x \rho|^2 \rho^{2\alpha-4} \partial_x(\rho w)+ \alpha | \partial_x \rho|^2 \rho^{2\alpha-4} \partial_x(\rho w) \\ & - \rho Q \bigg( \frac{\partial_x(\mu(\rho) \partial_x w)}{\rho} -w \partial_x w \bigg) \bigg] dx -\frac{1}{2} \int_{\mathbb{T}} \bigg[ \partial_x(\rho w) \bigg( \tilde{u}^2-(1-\alpha) \tilde{u}Q \bigg) \\ & + \partial_{xx}(\rho w) \tilde{u} \rho^{\alpha-1} -2\rho \tilde{u} \bigg( \frac{\partial_x( \mu(\rho) \partial_x w)}{\rho}- \varepsilon w -\partial_x(\tilde{u}w)-w\partial_x w \bigg) \\ & + \partial_x(\tilde{u}w) \partial_x \rho \rho^{\alpha-1} \bigg]dx-\int_{\mathbb{T}} \bigg( \frac{1}{2}\varepsilon w \rho^{\alpha-1} \partial_x \rho +\frac{\varepsilon}{2(\alpha-1)} \partial_x(\rho w) \rho^{\alpha-1} \bigg) dx
\end{split}
\end{equation*}
and we observe that after integration by parts
\begin{equation*}
\begin{split}
& \int_{\mathbb{T}} \bigg[ \partial_x \rho \rho^{2\alpha-3} \partial_{xx} (\rho w) -\frac{3}{2} | \partial_x \rho|^2 \rho^{2\alpha-4} \partial_x(\rho w)  \\ & + \alpha | \partial_x \rho|^2 \rho^{2\alpha-4} \partial_x(\rho w)  - \rho Q \bigg( \frac{\partial_x(\mu(\rho) \partial_x w)}{\rho} -w \partial_x w \bigg) \bigg] dx  \\ & = \int_{\mathbb{T}} \bigg[ \frac{1}{2} \partial_x \bigg( | \partial_x \rho|^2 \bigg) w \rho^{2\alpha-3}+ \frac{1}{2} | \partial_x \rho|^2 w \rho^{2\alpha-3}+ \alpha | \partial_x \rho|^2 \partial_x w \rho^{2\alpha-3} \\ & -\frac{3}{2} | \partial_x \rho|^3 \rho^{2\alpha-4} w + \alpha | \partial_x \rho|^3 \rho^{2\alpha-4} w -\alpha | \partial_x \rho|^2 \rho^{2\alpha-3} \partial_x w + w \partial_x w \rho^{\alpha-1} \partial_x \rho \bigg]dx  \\ & = \int_{\mathbb{T}} w \partial_x w \rho^{\alpha-1} \partial_x \rho dx,
\end{split}
\end{equation*}
while 
\begin{equation*}
\begin{split}
& \int_{\mathbb{T}} \bigg[ \partial_x(\rho w) \bigg( \tilde{u}^2-(1-\alpha) \tilde{u}Q \bigg) + \partial_{xx}(\rho w) \tilde{u} \rho^{\alpha-1} \\ & -2\rho \tilde{u} \bigg( \frac{\partial_x( \mu(\rho) \partial_x w)}{\rho}-\partial_x(\tilde{u}w)-w\partial_x w \bigg)  + \partial_x(\tilde{u}w) \partial_x \rho \rho^{\alpha-1} \bigg]dx \\ & = \int_{\mathbb{T}} \bigg[ \partial_x(\rho w) \bigg( \tilde{u}^2-(1-\alpha) \tilde{u}Q -\partial_x ( \tilde{u} \rho^{\alpha-1}) \bigg) \\ & + 2 \partial_x \tilde{u} \mu(\rho) \partial_x w + 2 \rho \tilde{u} \partial_x (\tilde{u}w)+ 2 \rho \tilde{u} w \partial_x w + \partial_x( \tilde{u}w) \partial_x \rho \rho^{\alpha-1} \bigg] dx \\ & = \int_{\mathbb{T}} \bigg[ \partial_x(\rho w) \bigg( \tilde{u}^2-\partial_x  \tilde{u} \rho^{\alpha-1} \bigg) + 2 \partial_x \tilde{u} \mu(\rho) \partial_x w + 2 \rho \tilde{u} \partial_x (\tilde{u}w)+ 2 \rho \tilde{u} w \partial_x w + \partial_x( \tilde{u}w) \partial_x \rho \rho^{\alpha-1} \bigg] dx \\ & = \int_{\mathbb{T}} \partial_x \tilde{u} \partial_x w \mu(\rho) + 2 \rho \tilde{u}^2 \partial_x w + 2 \rho \tilde{u} w \partial_x w + \tilde{u} \partial_x \rho \partial_x w \rho^{\alpha-1} dx.
\end{split}
\end{equation*}
Therefore we end up with the following equality 
\begin{equation*}
\begin{split}
\mathfrak{R}(\rho, \tilde{u}, f,g,r)  = & -\dfrac{\gamma a^2}{\gamma-1} \int_{\mathbb{T}} \partial_x(\rho w) \rho^{\gamma-1}dx -\int_{\mathbb{T}} \partial_x(\rho w) \big[ \partial_x (k(\rho) \partial_x \rho)-\dfrac{k'(\rho)}{2} | \partial_x \rho |^2 \big]dx \\ & -\frac{1}{2}\int_{\mathbb{T}} \bigg( w \partial_x w \rho^{\alpha-1} \partial_x \rho+  \partial_x \tilde{u} \partial_x w \mu(\rho) + 2 \rho \tilde{u}^2 \partial_x w + 2 \rho \tilde{u} w \partial_x w + \tilde{u} \partial_x \rho \partial_x w \rho^{\alpha-1} \bigg) dx \\ & + \int_{\mathbb{T}} \bigg( -\varepsilon \rho \tilde{u} w- \frac{1}{2}\varepsilon w \rho^{\alpha-1} \partial_x \rho +\frac{\varepsilon}{2(\alpha-1)} \partial_x(\rho w) \rho^{\alpha-1} \bigg) dx \\ & = \sum_{i=1}^{10} I_i.
\end{split}
\end{equation*}
The pressure term can be handled as follows
\begin{equation}\label{I_1 path entropy est}
 |I_1| \le C \| \partial_x w \|_{L^\infty} \mathcal{E}(\rho, \tilde{u}),
\end{equation}
while for the capillary term we integrate by parts multiple times to get 
\begin{equation*}
\begin{split}
|I_2| &  =  \bigg| \int_{\mathbb{T}} \partial_x (\rho w) \big[ \partial_x (k(\rho) \partial_x \rho)- \frac{k'(\rho)}{2} | \partial_x \rho |^2 \big] dx \bigg|  \\ & =  \bigg| \int_{\mathbb{T}} \rho \partial_x w \big[ \partial_x (k(\rho) \partial_x \rho)- \frac{k'(\rho)}{2} | \partial_x \rho |^2 \big] + w \partial_x \rho \big[ \partial_x (k(\rho) \partial_x \rho)- \frac{k'(\rho)}{2} | \partial_x \rho |^2 \big] dx \bigg| \\ & = \bigg| \int_{\mathbb{T}} \partial_x \rho \partial_x w k(\rho) \partial_x \rho+ \rho \partial_{xx} w  k(\rho) \partial_x \rho - \rho \partial_x w \frac{k'(\rho)}{2} | \partial_x \rho |^2 - \partial_x w k(\rho) | \partial_x \rho|^2 - w \partial_x \bigg( k(\rho) | \partial_x \rho |^2 \bigg) dx  \bigg|  \\ & =  \sum_{i=1,5}  | I_{2,i} |.
\end{split}
\end{equation*}
We observe that
\begin{equation}
| I_{2,1} | \le \| \partial_x w \|_{L^\infty} \mathcal{E}(\rho, \tilde{u}),
\end{equation}
then in order to estimate $I_{2,2}$ we observe that by virtue of the conservation of mass there exists a point $x_0 \in \mathbb{T}$ such that $\rho(x_0)=1$ and therefore by using the fundamental theorem of calculus
\begin{equation}\label{tfc beta}
\rho^{\frac{\beta+2}{2}}= 1+ \int_{x_0}^{x} \partial_y \rho^{\frac{\beta+2}{2}} dy \le 1 + \int_{\mathbb{T}} | \partial_x \rho^{\frac{\beta+2}{2}} | dx,
\end{equation}
hence by H\"{o}lder inequality and \eqref{tfc beta}
\begin{equation}
\begin{split}
| I_{2,2} | = \bigg| \int_{\mathbb{T}} \rho \partial_{xx} w k(\rho) \partial_x \rho dx \bigg| & \le \| \partial_{xx} w \|_{L^\infty} \| \rho^{\frac{\beta+2}{2}} \|_{L^2}  \| \partial_x \rho^{\frac{\beta+2}{2}} \|_{L^2} \\ & \le \| \partial_{xx} w \|_{L^\infty}( C_1+ C_2 \| \partial_x \rho^{\frac{\beta+2}{2}} \|_{L^2}) \| \partial_x \rho^{\frac{\beta+2}{2}} \|_{L^2} \\ & \le C(  \| \partial_{xx} w \|_{L^\infty} +  \| \partial_{xx} w \|^2_{L^\infty}) \mathcal{E}(\rho, \tilde{u}).
\end{split}
\end{equation}
With similar lines of argument we have 
\begin{equation}
|I_{2,3}|+| I_{2,4}| + |I_{2,5}| \le \| \partial_x w \|_{L^\infty} \mathcal{E}(\rho, \tilde{u}),
\end{equation}
\begin{equation}
\begin{split}
|I_3|=\bigg|  \int_{\mathbb{T}} w \partial_x w \rho^{\alpha-1} \partial_x \rho dx \bigg| & \le \| w \|_{L^\infty} \| \partial_x w \|_{L^\infty} \| \partial_x \rho^{\alpha-\frac{1}{2}} \|_{L^2} \| \sqrt{\rho} \|_{L^2} \\ & \le \| w \|^2_{L^\infty} + \mathcal{E}(\rho, \tilde{u}) \| \partial_x w \|^2_{L^\infty},
\end{split}
\end{equation}
\begin{equation}
\begin{split}
|I_4|= \bigg| \int_{\mathbb{T}} \partial_x \tilde{u} \partial_x w \rho^{\alpha} dx \bigg| \le \| \rho^{\frac{\alpha}{2}} \partial_x \tilde{u} \|_{L^2} \| \rho^{\frac{\alpha}{2}} \|_{L^2} \le \delta \| \rho^{\frac{\alpha}{2}} \partial_x \tilde{u} \|^2_{L^2}+ C(\delta) \mathcal{E}(\rho, \tilde{u}),
\end{split}
\end{equation}
\begin{equation}
\begin{split}
|I_5| \le C \| \partial_x w \|_{L^\infty} \mathcal{E}(\rho, \tilde{u}),
\end{split}
\end{equation}
\begin{equation}
\begin{split}
|I_6| \le C \| w \|_{L^\infty} \| \partial_x w \|_{L^\infty} \| \sqrt{\rho} \|_{L^2} \| \sqrt{\rho} u \|_{L^2} \le \| w \|^2_{L^\infty} + \| \partial_x w \|^2_{L^\infty} \mathcal{E}(\rho, \tilde{u}),
\end{split}
\end{equation}
\begin{equation}
\begin{split}
|I_7| \le C \| \partial_x w \|_{L^\infty} \| \sqrt{\rho} u \|_{L^2} \| \partial_x \rho^{\alpha-\frac{1}{2}} \|_{L^2} \le C \| \partial_x w \|_{L^\infty} \mathcal{E}(\rho, \tilde{u}),
\end{split}
\end{equation}
\begin{equation}
\begin{split}
|I_8| \le C \| w \|_{L^\infty} \| \sqrt{\rho} \|_{L^2} \| \sqrt{\rho} u \|_{L^2} \le \| w \|^2_{L^\infty}+ \mathcal{E}(\rho,\tilde{u}),
\end{split}
\end{equation}
\begin{equation}
\begin{split}
|I_9| \le C \| w \|_{L^\infty} \| \sqrt{\rho} \|_{L^2} \| \partial_x \rho^{\alpha-\frac{1}{2}} \|_{L^2} \le \| w \|^2_{L^\infty}+ \mathcal{E}(\rho, \tilde{u}),
\end{split}
\end{equation}
\begin{equation}\label{I_10 path entropy est}
\begin{split}
|I_{10}| \le \bigg| \int_{\mathbb{T}} w \partial_x \rho \rho^{\alpha-1} dx \bigg| + \bigg| \int_{\mathbb{T}} \rho \partial_x w \rho^{\alpha-1} dx \bigg| \le \| w \|^2_{L^\infty}+ \| \partial_x w \|_{L^\infty} + 2 \mathcal{E}(\rho, \tilde{u}).
\end{split}
\end{equation}
Therefore summing up \eqref{I_1 path entropy est}-\eqref{I_10 path entropy est} and choosing $\delta$ sufficiently small, we deduce 
\begin{equation}
\dfrac{d}{dt} \mathcal{E}(\rho, \tilde{u}) + c \, \mathcal{D}[\rho, \tilde{u}] \le a_1(t)+ b_1(t) \mathcal{E}(\rho, \tilde{u}),
\end{equation}
with $a_1(t), \; b_1(t)$ integrable in time and $c$ being a positive constant. Our claim then follows by using the Gronwall lemma and by rewriting the entropy $\mathcal{E}(\rho, \tilde{u})$ in terms of the original variables $(\rho,u)$ and $w.$
Finally, to deduce \eqref{upper rho path} and \eqref{lower rho path} we observe that similarly to \eqref{tfc beta}
\begin{equation}\label{rho L infty alpha}
\rho^{\alpha}= 1+ \int_{x_0}^{x} \partial_y \rho^{\alpha} dy \le 1 + \int_{\mathbb{T}} | \partial_x \rho^{\alpha} | dx 
\end{equation}
on the other hand since 
\begin{equation}\label{visc energ path}
\sqrt{\rho} Q= \dfrac{\partial_x \rho^{\alpha-\frac{1}{2}}}{(\alpha-\frac{1}{2})} \in L^\infty(0,T; L^2(\mathbb{T})), \quad \mathbb{P}-a.s.,
\end{equation}
and by using the mass constraint and H\'{o}lder inequality we have
\begin{equation}
\int_{\mathbb{T}} |\partial_x \rho^{\alpha} | dx = \int_{\mathbb{T}} \sqrt{\rho} \partial_x \rho^{\alpha-\frac{1}{2}} dx \le C(\sigma W, a, \mathcal{E}(\rho_0, u_0), T),
\end{equation}
then \eqref{upper rho path} follows by taking the sup in both space and time in \eqref{rho L infty alpha}.
With similar lines of argument we also have that by virtue of \eqref{visc energ path} and the estimate 
\begin{equation*}
\frac{\partial_x \rho^{\frac{\beta}{2}+1}}{(\frac{\beta}{2}+1)}\in L^\infty(0,T; L^2(\mathbb{T})), \quad \mathbb{P}-a.s.,
\end{equation*}
then \eqref{lower rho path} holds provided that \eqref{NV} is satisfied. This concludes the proof.
\end{proof}
\noindent
The next proposition is an higher order derivatives estimate.  Note that the momentum equation involves three derivatives on the density and the continuity equation yields loss of  derivatives with respect to the velocity $u.$ The following high-order derivatives estimate is therefore obtained thanks to the introduction of the new variable $A(\rho)$ which determines a skew-adjoint structure of \eqref{main system} providing several cancellations between high-order derivative terms.
\begin{proposition}\label{Prop path HS est}
Let $(\rho,u)$ be a solution of \eqref{main system} with initial condition $(\rho_0,u_0) \in \mathcal{X}.$ 
Assume that conditions \eqref{SCC} and \eqref{NV} are satisfied. Then, for any $T>0$ the following pathwise high-order estimate holds $\mathbb{P}-a.s.$
\begin{equation}\label{H2 estimate}
\sup_{t \in [0,T]} \bigg[ \| \partial_{x} A(\rho) \|^2_{L^2} + \| \partial_x \tilde{u} \|^2_{L^2} \bigg] + \int_{0}^{T} \int_{\mathbb{T}} \rho^{\alpha-1} |\partial_{xx} \tilde{u}|^2  dxdt \le C,
\end{equation}
\begin{equation}\label{dissip A}
\int_{0}^{T} \int_{\mathbb{T}} | \partial_{xx} A(\rho) |^2 dxdt \le C,
\end{equation}
with 
\begin{equation}\label{def A(rho)}
A(\rho)= \sqrt{\dfrac{k(\rho)}{\rho}} \partial_x \rho
\end{equation}
and $C$ being a generic constant $C=C( \sigma W, a, \mathcal{E}(\rho_0,u_0),  \| \partial_{x}  A(\rho_0) \|_{L^2}, \| \partial_x u_0 \|_{L^2}, T).$
\end{proposition}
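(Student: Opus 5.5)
We will work pathwise with the random PDE \eqref{random pde} for $(\rho,\tilde u)$ and use the Bresch–Desjardins bounds of Proposition \ref{Prop path BD entropy}. These give, $\mathbb{P}$-a.s. on $[0,T]$, the two-sided bound $c\le\rho\le C$ from \eqref{upper rho path}–\eqref{lower rho path}, $\partial_x\rho\in L^\infty_tL^2_x$ (from $H$ and the upper/lower bounds), $\sqrt\rho\,\partial_x\tilde u\in L^2_{t,x}$, and $\partial_{xx}\rho^{\frac{\alpha+\beta+1}{2}}\in L^2_{t,x}$. Because $\rho$ is bounded above and below, all power weights are equivalent to constants. In particular $\partial_{xx}\rho\in L^2_{t,x}$, since $\partial_{xx}\rho^\theta=\theta\rho^{\theta-1}\partial_{xx}\rho+\theta(\theta-1)\rho^{\theta-2}|\partial_x\rho|^2$ and $\|\partial_x\rho\|_{L^4}^4\lesssim\|\partial_x\rho\|_{L^2}^3\|\partial_{xx}\rho\|_{L^2}$ by Gagliardo–Nirenberg; the quartic term is therefore absorbed. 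We then rewrite the system in the variables $(\rho,\tilde u)$ with $A=A(\rho)=\sqrt{k(\rho)/\rho}\,\partial_x\rho$. Using \eqref{partial x K}, the capillarity force divided by $\rho$ becomes $\partial_x\big(\sqrt{k/\rho}\,\partial_x(\sqrt{k\rho}\,\partial_x\rho/\rho)\big)$, up to lower-order terms in the spirit of \cite{Benz}. Differentiating the continuity equation gives $\partial_tA+\partial_x(\tilde uA)+\sqrt{k\rho}\,\partial_{xx}\tilde u=$ (noise terms depending on $w$) $+$ (lower order). The momentum equation for $\tilde u$ contains $\sqrt{k\rho}\,\partial_x\big(\frac{1}{\rho}\partial_x(\ldots)\big)$; in the normalization of \cite{Benz}, with $a(\rho)=\sqrt{k\rho}$, it reads $\partial_t\tilde u+\tilde u\partial_x\tilde u-\partial_x(a\,\partial_x A/\rho)=\ldots$.

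We then differentiate the $\tilde u$ equation once in $x$, test it with $\partial_x\tilde u$, and test the $A$ equation with $-\partial_{xx}A$, or equivalently $\partial_xA$ after one further derivative, with the weights chosen so that the two third-order cross terms $\int a\,\partial_{xx}\tilde u\,\partial_{xx}A$ cancel. This is the skew-adjoint cancellation. The viscous term $\rho^{-1}\partial_x(\rho^\alpha\partial_x\tilde u)$ tested with $-\partial_{xx}\tilde u$ yields the dissipation $\int\rho^{\alpha-1}|\partial_{xx}\tilde u|^2$, with the error $\rho^{\alpha-2}\partial_x\rho\,\partial_x\tilde u\,\partial_{xx}\tilde u$ handled by Young's inequality, the bound on $\|\partial_x\rho\|_{L^\infty}\lesssim\|\partial_x A\|_{L^2}+C$, and $\|\partial_x\tilde u\|_{L^\infty}^2\lesssim\|\partial_x\tilde u\|_{L^2}\|\partial_{xx}\tilde u\|_{L^2}$. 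The convective terms $\int\partial_x\tilde u\,|\partial_x\tilde u|^2$ and $\int\partial_x\tilde u\,|\partial_xA|^2$ are bounded by $\|\partial_x\tilde u\|_{L^\infty}(\|\partial_x\tilde u\|^2_{L^2}+\|\partial_xA\|^2_{L^2})$, and after Young give the Gronwall coefficient $\|\partial_x\tilde u\|_{L^2}^2$. Pressure terms and commutators involving $a'(\rho)\partial_x\rho$ and $\partial_x\rho$ are treated the same way. Noise terms involve $w,\partial_xw,\partial_{xx}w,\partial_x^3w$, which lie in $L^\infty_t$ because $\sigma\in H^3$ and $W$ is continuous; they are bounded by $C(w)(1+\|\partial_xA\|^2+\|\partial_x\tilde u\|^2)+\delta\int\rho^{\alpha-1}|\partial_{xx}\tilde u|^2$.

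The resulting differential inequality is
$$\frac{d}{dt}\big(\|\partial_xA\|^2_{L^2}+\|\partial_x\tilde u\|^2_{L^2}\big)+c\int\rho^{\alpha-1}|\partial_{xx}\tilde u|^2\le b(t)\big(1+\|\partial_xA\|^2_{L^2}+\|\partial_x\tilde u\|^2_{L^2}\big),$$
where $b\in L^1(0,T)$ is built from $\|\partial_x\tilde u\|^2_{L^2}$, $\|\partial_{xx}\rho\|^2_{L^2}$ and the $w$-norms, all of which are integrable pathwise. Gronwall's lemma then gives \eqref{H2 estimate}. For \eqref{dissip A} we write $\partial_{xx}A=\sqrt{k/\rho}\,\partial^3_x\rho+$ (terms quadratic and cubic in lower derivatives). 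The BD dissipation together with the bounds on $\rho$ controls $\partial_{xx}\rho$ in $L^2_{t,x}$ but not $\partial_x^3\rho$. So we would instead obtain $\partial_{xx}A$ from the momentum equation: solving for $\partial_x(a\,\partial_xA/\rho)$ expresses it through $\partial_t\tilde u$, which is uncontrolled. Our first attempt will therefore be to test the $A$ equation with $\partial_{xx}A$ against the damping-like contribution generated by the viscosity, namely the effective-velocity trick, in which $\tilde u+Q$ eliminates the $\partial_{xx}\tilde u$ term and produces a parabolic equation for $A$ with diffusion $\rho^{\alpha-1}\partial_{xx}A$. Testing that equation with $-\partial_{xx}A$ gives $\int_0^T\|\partial_{xx}A\|^2$ bounded in terms of quantities already controlled by \eqref{H2 estimate}. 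The main obstacle is the algebra of the skew-adjoint cancellation for general $\beta$ and the bookkeeping of the commutators; the second, parabolic step for \eqref{dissip A} is the other delicate point.
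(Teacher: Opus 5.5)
Your treatment of \eqref{H2 estimate} follows the paper's argument. You test the $A$-equation with $-\partial_{xx}A$ and the $\tilde u$-equation with $-\partial_{xx}\tilde u$, use the skew-adjoint cancellation of the third-order cross terms, and close by Gronwall with coefficients made integrable by the pathwise BD bounds. One small correction: the capillary force in the $\tilde u$-equation is exactly $\partial_x(\sqrt{\rho k}\,\partial_x A)+A\partial_x A$. It carries the same coefficient $\sqrt{\rho k}$ that appears in front of $\partial_x\tilde u$ in \eqref{eq A(rho)}, not $\sqrt{k/\rho}$, which is why the plain unweighted sum already cancels.

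The gap is in \eqref{dissip A}: the effective-velocity step is circular. Substituting $\tilde u=\tilde V-Q$, $Q=\rho^{\alpha-2}\partial_x\rho$, into \eqref{eq A(rho)} does produce $-\partial_x(\sqrt{\rho k}\,\partial_xQ)=-\partial_x(\rho^{\alpha-1}\partial_xA)+\text{l.o.t.}$. It also produces the forcing $\partial_x(\sqrt{\rho k}\,\partial_x\tilde V)$. Since $\sqrt{\rho k}\,\partial_{xx}\tilde V=\sqrt{\rho k}\,\partial_{xx}\tilde u+\rho^{\alpha-1}\partial_{xx}A+\text{l.o.t.}$, testing with $-\partial_{xx}A$ gives a top-order forcing contribution containing exactly $\int\rho^{\alpha-1}|\partial_{xx}A|^2$. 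This cancels the diffusion you created; the substitution is an identity, and you are back to the original equation, where $\partial_{xx}A$ is paired with $\partial_{xx}\tilde u$ and has no coercive term. Nothing in \eqref{H2 estimate} or the BD entropy controls $\partial_{xx}\tilde V$ independently, since $\tilde V$ obeys an equation without viscosity. So the claim that the result is bounded by quantities already controlled by \eqref{H2 estimate} fails.

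The route you discarded is the one that works, provided you integrate by parts in time rather than solving for $\partial_{xx}A$ pointwise. Multiply \eqref{random pde u systm} by $\partial_{xx}A$. The capillary term gives $\int\sqrt{\rho k}\,|\partial_{xx}A|^2$, and $\sqrt{\rho k}$ is bounded below pathwise. For the time derivative,
\[
\int_{\mathbb{T}}\partial_t\tilde u\,\partial_{xx}A\,dx=-\frac{d}{dt}\int_{\mathbb{T}}\partial_x\tilde u\,\partial_xA\,dx-\int_{\mathbb{T}}\partial_{xx}\tilde u\,\partial_tA\,dx .
\]
After integrating over $[0,T]$, the boundary term is bounded by $\sup_t\|\partial_x\tilde u\|_{L^2}\|\partial_xA\|_{L^2}$. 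By \eqref{eq A(rho)}, $\partial_tA=-\partial_x(A\tilde u)-\partial_x(\sqrt{\rho k}\,\partial_x\tilde u)+\partial_x(\sqrt{k/\rho}\,f)$, which lies in $L^2_{t,x}$ thanks to \eqref{H2 estimate}. The viscous, convective, commutator, pressure and noise terms are absorbed by Young into $\delta\|\partial_{xx}A\|^2_{L^2}$ plus quantities already bounded. This is what the paper means by deriving \eqref{dissip A} from \eqref{eq A(rho)}--\eqref{random pde u systm}, integration by parts and \eqref{H2 estimate}.
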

\begin{proof}
First we observe that by virtue of the introduction of the new unknown $A(\rho),$ system \eqref{main system} can be rewritten as 
\begin{equation}\label{eq A(rho)}
\partial_t A(\rho)+ \partial_x ( A(\rho) \tilde{u} )+ \partial_x (\sqrt{ \rho k(\rho)} \partial_x \tilde{u} )= \partial_x \bigg( \sqrt{ \frac{k(\rho)}{\rho}} f \bigg),
\end{equation}
\begin{equation}\label{random pde u systm}
\partial_t  \tilde{u}+ \tilde{u}\partial_x \tilde{u}+ \dfrac{\partial_x p(\rho)}{\rho}= \dfrac{\partial_x (\mu(\rho)\partial_x \tilde{u})}{\rho}+ \partial_x (\sqrt{\rho k(\rho)} \partial_x A(\rho)) + A(\rho) \partial_x A(\rho) -\varepsilon u+ g(\rho,\tilde{u},w),
\end{equation}
then we multiply \eqref{eq A(rho)} by $-\partial_{xx} A(\rho)$ and integrating by parts we get 
\begin{equation}\label{test partial xx rho}
\begin{split}
\dfrac{1}{2} \dfrac{d}{dt} \| \partial_x A(\rho) \|^2_{L^2} = &  \int_{\mathbb{T}}  \partial_x ( A(\rho) \tilde{u} )\partial_{xx} A(\rho)dx + \int_{\mathbb{T}} \partial_x (\sqrt{ \rho k(\rho)} \partial_x \tilde{u} ) \partial_{xx} A(\rho)dx \\ & - \int_{\mathbb{T}} \partial_x \bigg( \sqrt{ \frac{k(\rho)}{\rho}} f \bigg)\partial_{xx} A(\rho) dx
\end{split}
\end{equation}
and similarly, by testing \eqref{random pde u systm} with $-\partial_{xx} \tilde{u}$ we obtain 
\begin{equation}\label{test partial xx u}
\begin{split}
\dfrac{1}{2} \dfrac{d}{dt} \| \partial_x \tilde{u} \|^2_{L^2}  +  \int_{\mathbb{T}} \rho^{\alpha-1} | \partial_{xx} \tilde{u} |^2 dx & = -\varepsilon \| \partial_x \tilde{u} \|^2_{L^2}+  \int_{\mathbb{T}} \tilde{u} \partial_x \tilde{u} \partial_{xx} \tilde{u} dx+\int_{\mathbb{T}} \dfrac{\partial_x p(\rho) }{\rho} \partial_{xx} \tilde{u} dx \\ &  - \int_{\mathbb{T}} \dfrac{\partial_x \mu(\rho) \partial_x \tilde{u}}{\rho} \partial_{xx} \tilde{u} dx  -\int_{\mathbb{T}} \partial_x (\sqrt{\rho k(\rho)} \partial_x A(\rho)) \partial_{xx} \tilde{u} dx\\ & - \int_{\mathbb{T}}A(\rho) \partial_x A(\rho) \partial_{xx} \tilde{u} dx - \int_{\mathbb{T}} g(\rho,u,w) \partial_{xx} \tilde{u} dx.
\end{split}
\end{equation}
Summing up \eqref{test partial xx rho} and \eqref{test partial xx u} we end up with 
\begin{equation}\label{Hs equality}
\begin{split}
& \dfrac{1}{2} \dfrac{d}{dt} \bigg( \| \partial_x A(\rho) \|^2_{L^2} + \| \partial_x \tilde{u} \|^2_{L^2}  \bigg) +  \int_{\mathbb{T}} \rho^{\alpha-1} | \partial_{xx} \tilde{u} |^2 dx  +  \varepsilon \| \partial_x \tilde{u} \|^2_{L^2} = \int_{\mathbb{T}}  \partial_x ( A(\rho) \tilde{u} )\partial_{xx} A(\rho)dx  \\ & + \int_{\mathbb{T}} \partial_x (\sqrt{ \rho k(\rho)} \partial_x \tilde{u} ) \partial_{xx} A(\rho)dx - \int_{\mathbb{T}} \partial_x \bigg( \sqrt{ \frac{k(\rho)}{\rho}} f \bigg)\partial_{xx} A(\rho) dx +  \int_{\mathbb{T}} \tilde{u} \partial_x \tilde{u} \partial_{xx} \tilde{u} dx \\ & +\int_{\mathbb{T}} \dfrac{\partial_x p(\rho) }{\rho} \partial_{xx} \tilde{u} dx - \int_{\mathbb{T}} \dfrac{\partial_x \mu(\rho) \partial_x \tilde{u}}{\rho} \partial_{xx} \tilde{u} dx - \int_{\mathbb{T}} \dfrac{\partial_x \mu(\rho) \partial_x \tilde{u}}{\rho} \partial_{xx} \tilde{u} dx  \\ & -\int_{\mathbb{T}} \partial_x (\sqrt{\rho k(\rho)} \partial_x A(\rho)) \partial_{xx} \tilde{u} dx - \int_{\mathbb{T}}A(\rho) \partial_x A(\rho) \partial_{xx} \tilde{u} dx  - \int_{\mathbb{T}} g(\rho,u,w) \partial_{xx} \tilde{u} dx.
\end{split}
\end{equation}
The key point here is to note that, as highlighted in Remark \ref{remark structural prop compressible},  by virtue of the skew-symmetric structure of system  \eqref{eq A(rho)}-\eqref{random pde u systm} an important cancellation between the second and the eighth term in the right hand side of the above equality occurs.  To be precise 
\begin{equation}\label{sum with cancellation}
\begin{split}
&  \int_{\mathbb{T}} \partial_x ( \sqrt{\rho k(\rho)} \partial_x \tilde{u}) \partial_{xx} Adx - \int_{\mathbb{T}} \partial_x (\sqrt{\rho k(\rho)}) \partial_x A) \partial_{xx} \tilde{u} dx \\ & =  \int_{\mathbb{T}} \partial_x (\sqrt{\rho k(\rho)}) \partial_x \tilde{u} \partial_{xx} Adx- \int_{\mathbb{T}} \partial_x (\sqrt{\rho k(\rho)}) \partial_x A \partial_{xx} \tilde{u} dx.
\end{split}
\end{equation}
Thus we can rewrite \eqref{Hs equality} as 
\begin{equation}\label{Hs equality final}
\begin{split}
& \dfrac{1}{2} \dfrac{d}{dt} \bigg( \| \partial_x A(\rho) \|^2_{L^2} + \| \partial_x \tilde{u} \|^2_{L^2}  \bigg) +  \int_{\mathbb{T}} \rho^{\alpha-1} | \partial_{xx} \tilde{u} |^2 dx  +  \varepsilon \| \partial_x \tilde{u} \|^2_{L^2}=  \int_{\mathbb{T}}  \partial_x ( A(\rho) \tilde{u} )\partial_{xx} A(\rho)dx \\ & - \int_{\mathbb{T}} \partial_x \bigg( \sqrt{ \frac{k(\rho)}{\rho}} f \bigg)\partial_{xx} A(\rho) dx+  \int_{\mathbb{T}} \tilde{u} \partial_x \tilde{u} \partial_{xx} \tilde{u} dx  +\int_{\mathbb{T}} \dfrac{\partial_x p(\rho) }{\rho} \partial_{xx} \tilde{u} dx  \\ & - \int_{\mathbb{T}} \dfrac{\partial_x \mu(\rho) \partial_x \tilde{u}}{\rho} \partial_{xx} \tilde{u} dx - \int_{\mathbb{T}} g(\rho,u,w) \partial_{xx} \tilde{u} dx  -\int_{\mathbb{T}} A(\rho) \partial_x A(\rho) \partial_{xx} \tilde{u} dx  \\ & + \int_{\mathbb{T}} \partial_x (\sqrt{ \rho k(\rho)}) \partial_x \tilde{u}\partial_{xx} A(\rho)dx - \int_{\mathbb{T}} \partial_x (\sqrt{ \rho k(\rho)} ) \partial_{xx} \tilde{u} \partial_{x} A(\rho)dx = \sum_{i=1}^{9} I_i.
\end{split}
\end{equation}
Now we start estimating the integrals in the right hand side of \eqref{Hs equality final}. In particular, we will make an extensive use of H\"{o}lder, Sobolev and Young inequalities. For the reader simplicity, we also highlight that up to a positive deterministic constant $C=C(\alpha,\beta),$ 
\begin{equation*}
|\partial_x A| \le \rho^{-\frac{\alpha}{2}} |\partial_{xx} \rho^{\frac{\alpha+\beta+1}{2}} |+\rho^{-\frac{\alpha}{2}}| \partial_x \rho^{\frac{\alpha+\beta+1}{4}}|^2
\end{equation*}
and therefore by using also Proposition 2.8  in \cite{Alazard} for the function $f=\rho^{\frac{\alpha+\beta+1}{2}},$ we have
\begin{equation*}
\| \partial_x A \|_{L^2} \le \| \rho^{-\frac{\alpha}{2}} \|_{L^\infty} \| \rho^{\frac{\alpha}{2}} \partial_x A \|_{L^2}\le \| \rho^{-\frac{\alpha}{2}} \|_{L^\infty} \| \partial_{xx} \rho^{\frac{\alpha+\beta+1}{2}} \|_{L^2}.
\end{equation*}
We omit the dependence on positive constants depending on $\alpha$ and $\beta.$
\begin{equation}\label{I1}
\begin{split}
|I_1|& \le \int_{\mathbb{T}} | \partial_x A|^2 |\partial_x \tilde{u}| dx +\int_{\mathbb{T}} |A(\rho)| | \partial_x A(\rho)| | \partial_{xx}\tilde{u}| dx \\ & \le \| \partial_x A(\rho) \|_{L^2} \| \partial_x A(\rho) \|_{L^2} \| \partial_x \tilde{u} \|_{L^\infty} + \| A(\rho) \|_{L^\infty} \| \frac{1}{\rho^{\frac{\alpha-1}{2}}}\|_{L^\infty} \| \partial_x A(\rho) \|_{L^2} \| \rho^{\frac{\alpha-1}{2}} \partial_{xx} \tilde{u} \|_{L^2} \\ & \le \| \partial_x A(\rho) \|_{L^2} \| \partial_{xx} \rho^{\frac{\alpha+\beta+1}{2}} \|_{L^2} \| \partial_{xx} \tilde{u} \|_{L^2} \| \rho^{-\frac{\alpha}{2}} \|_{L^\infty}  \\ & + \| \partial_x A(\rho) \|_{L^2} \| \frac{1}{\rho^{\frac{\alpha-1}{2}}}\|_{L^\infty} \| \partial_x A(\rho) \|_{L^2} \| \rho^{\frac{\alpha-1}{2}} \partial_{xx} \tilde{u} \|_{L^2} \\ & \le \| \partial_x A(\rho) \|_{L^2} \| \partial_{xx} \rho^{\frac{\alpha+\beta+1}{2}} \|_{L^2} \| \rho^{\frac{\alpha-1}{2}} \partial_{xx} \tilde{u} \|_{L^2} \| \rho^{\frac{-2 \alpha+1}{2}} \|_{L^\infty}  \\ & + \| \partial_{xx} \rho^{\frac{\alpha+\beta+1}{2}} \|_{L^2} \| \rho^{\frac{-2 \alpha+1}{2}} \|_{L^\infty} \| \partial_x A(\rho) \|_{L^2} \| \rho^{\frac{\alpha-1}{2}} \partial_{xx} \tilde{u} \|_{L^2} \\ & \le \delta \| \rho^{\frac{\alpha-1}{2}} \partial_{xx} \tilde{u} \|^2_{L^2}+ \| \partial_x A(\rho) \|^2_{L^2} \| \partial_{xx} \rho^{\frac{\alpha+\beta+1}{2}} \|^2_{L^2} \| \rho^{\frac{-2 \alpha+1}{2}} \|^2_{L^\infty}.
\end{split}
\end{equation}
The estimate of the integrals appearing in $I_2$ is very tedious but straightforward. For completeness, we explicit the computations
\begin{equation}
\begin{split}
| I_2 | & =  \bigg| \int_{\mathbb{T}} \partial_x \bigg( \sqrt{ \frac{k(\rho)}{\rho}} f \bigg)\partial_{xx} A(\rho) dx \bigg|  =\bigg| \int_{\mathbb{T}} \partial_{xx} \bigg( \sqrt{ \frac{k(\rho)}{\rho}} f \bigg)\partial_{x} A(\rho) dx \bigg| \\ & \le \bigg| \int_{\mathbb{T}} \bigg( \rho^{\frac{\beta-5}{2}} | \partial_x \rho |^3 w \partial_x A(\rho)+  \rho^{\frac{\beta-3}{2}} \partial_x \rho \partial_{xx} \rho w \partial_x A(\rho) + \rho^{\frac{\beta-3}{2}} | \partial_x \rho |^2 \partial_x u \\ & +\rho^{\frac{\beta-5}{2}} | \partial_x \rho |^2 \rho \partial_x w \partial_x A(\rho) +  \rho^{\frac{\beta-1}{2}} \partial_{xx} \rho  \partial_x w \partial_x A(\rho) + \rho^{\frac{\beta-1}{2}} \partial_x \rho  \partial_{xx} w \partial_x A(\rho) \\ & + \rho^{\frac{\beta-1}{2}} \partial^3_x \rho \partial_x A(\rho) w + + \rho^{\frac{\beta+1}{2}} \partial^3_x w \partial_x A(\rho) \bigg) dx \bigg| \\ & \le \sum_{i=1}^{8} I_{2,i}.
\end{split}
\end{equation}
We focus on the key terms $I_{2,7}$ and $I_{2,8}.$ The estimates of the remaining integrals follows the same strategy of  the previous lines.
\begin{equation}
\begin{split}
I_{2,7} & = \bigg| \int_{\mathbb{T}} w \rho^{\frac{\beta-1}{2}} \partial^3_x \rho \partial_x A(\rho) w dx \bigg| = \bigg| \int_{\mathbb{T}} w \rho^{\theta_1} \partial^3_x \rho \partial_{xx} \rho  dx \bigg| = \bigg| \int_{\mathbb{T}} w \rho^{\theta_1} \partial^3_x \rho \partial_{xx} \rho  dx \bigg| \\ & = \bigg| \int_{\mathbb{T}} \partial_x(w \rho^{\theta_1}) \frac{| \partial_{xx} \rho|^2}{2}  dx \bigg| \le \| w \|_{L^\infty} \| \partial_x \rho^{\theta_2} \|_{L^\infty} \| \partial_{xx} \rho^{\frac{\alpha+\beta+1}{2}} \|^2_{L^2} \\ & +\| \partial_x w \|_{L^\infty} \| \rho^{\theta_2} \|_{L^\infty} \| \partial_{xx} \rho^{\frac{\alpha+\beta+1}{2}} \|^2_{L^2} \\ & \le  \| w \|_{L^\infty} \|^2 \| \rho^{\theta_3} \|^2_{L^\infty} \| \partial_x A(\rho) \|^2_{L^2} \| \partial_{xx} \rho^{\frac{\alpha+\beta+1}{2}} \|^2_{L^2}+  \| \partial_{xx} \rho^{\frac{\alpha+\beta+1}{2}} \|^2_{L^2} \\ & +\| \partial_{xx} w \|_{L^2} \| \rho^{\theta_2} \|_{L^\infty} \| \partial_{xx} \rho^{\frac{\alpha+\beta+1}{2}} \|^2_{L^2},
\end{split}
\end{equation}
for $\theta_1, \theta_2, \theta_3$ being generic constants that can be computed explicitly.
\begin{equation}\label{I28 sigma H3}
I_{2,8} \le \| \rho^{\frac{\beta-1}{2}} \|_{L^\infty} \| \partial^3_x w \|_{L^2} \| \partial_x A(\rho) \|_{L^2} \le \| \rho^{\frac{\beta-1}{2}} \|^2_{L^\infty} \| \partial_x A(\rho) \|^2_{L^2}+\| \partial^3_x w \|^2_{L^2}.
\end{equation}
It is important to highlight that the assumption $\sigma \in H^3(\mathbb{T})$ is related to the coupling between the $H^2$ and $H^1$ estimates on the density and velocity respectively. In particular the $H^2$ estimate on $\rho$ involves two derivatives of $f.$ Clearly, in the zero-capillarity case this coupling is not needed and therefore only $\sigma \in H^2$ is assumed, see \cite{Coti}.
Concerning the other terms in \eqref{Hs equality final} we have 
\begin{equation}
\begin{split}
|I_3| & \le \| \tilde{u} \|_{L^\infty} \| \partial_x \tilde{u} \|_{L^2} \| \rho^{\frac{\alpha-1}{2}}\partial_{xx} \tilde{u} \|_{L^2} \| \frac{1}{\rho^{\frac{\alpha-1}{2}}}\|_{L^\infty}  \\ & \le C(\delta) \| \tilde{u} \|^2_{L^\infty} \| \partial_x \tilde{u} \|^2_{L^2}\| \frac{1}{\rho^{\frac{\alpha-1}{2}}}\|^2_{L^\infty}+ \delta \| \rho^{\frac{\alpha-1}{2}}\partial_{xx} \tilde{u} \|^2_{L^2} \\ & \le C(\delta) \| \partial_x \tilde{u} \|^2_{L^2} \| \frac{1}{\rho^{\frac{\alpha-1}{2}}} \|^2_{L^\infty}( \| \tilde{u} \|^2_{L^2}+ \| \partial_x \tilde{u} \|^2_{L^2})+ \delta \| \rho^{\frac{\alpha-1}{2}}\partial_{xx} \tilde{u} \|^2_{L^2},
\end{split}
\end{equation}
\\
\begin{equation}
\begin{split}
|I_4| &  \le c \| \rho^{\frac{2\gamma-\alpha-3}{2}} \|_{L^\infty} \| \partial_x \rho \|_{L^2} \| \rho^{\frac{\alpha-1}{2}}\partial_{xx} \tilde{u} \|_{L^2} \\ & \le C(\delta) \| \rho^{\frac{2\gamma-\alpha-3}{2}} \|^2_{L^\infty}  \| \partial_x \rho \|^2_{L^2}+\delta \| \rho^{\frac{\alpha-1}{2}}\partial_{xx} \tilde{u} \|^2_{L^2}
\end{split}
\end{equation}
and similarly
\begin{equation}
\begin{split}
| I_5| & \le C \| \rho^{\frac{-\beta-2}{2}} \|_{L^\infty} \| \partial_x \rho^{\frac{\alpha+\beta+1}{2}} \|_{L^\infty} \| \partial_x \tilde{u} \|_{L^2} \| \rho^{\frac{\alpha-1}{2}}\partial_{xx} \tilde{u} \|_{L^2} \\ & \le C \| \rho^{\frac{-\beta-2}{2}} \|_{L^\infty} \| \partial_{xx} \rho^{\frac{\alpha+\beta+1}{2}} \|_{L^2} \| \partial_x \tilde{u} \|_{L^2} \| \rho^{\frac{\alpha-1}{2}}\partial_{xx} \tilde{u} \|_{L^2} \\ & \le C(\delta) \| \rho^{\frac{-\beta-2}{2}} \|^2_{L^\infty} \| \partial_{xx} \rho^{\frac{\alpha+\beta+1}{2}} \|^2_{L^2} \| \partial_x \tilde{u} \|^2_{L^2} + \delta \| \rho^{\frac{\alpha-1}{2}}\partial_{xx} \tilde{u} \|^2_{L^2}.
\end{split}
\end{equation}
The terms including the remainder $g(\rho,u,w),$ can be estimate as follows
\begin{equation}
\begin{split}
| I_6| & = \bigg| \int_{\mathbb{T}} \bigg( \dfrac{\partial_x(\mu(\rho) \partial_x w)}{\rho} -\varepsilon w-\partial_x(\tilde{u}w)-w \partial_x w \bigg) \partial_{xx} \tilde{u} dx \bigg| \\ & \le I_{6,1}+I_{6,2}+I_{6,3}+I_{6,4}+I_{6,5},
\end{split}
\end{equation}
with 
\begin{equation}
I_{6,1}= \bigg | \int_{\mathbb{T}} \frac{\partial_x \rho^{\alpha}}{\rho}  \partial_x w  \partial_{xx} \tilde{u} dx \bigg| \le C(\delta) \| \rho^{\alpha-3} \|_{L^\infty} \| \partial_x \rho \|^2_{L^\infty} \| \partial_{x} w \|^2_{L^2} +\delta \| \rho^{\frac{\alpha-1}{2}} \partial_{xx} \tilde{u} \|^2_{L^2},
\end{equation}
\begin{equation}
I_{6,2} \le \bigg | \int_{\mathbb{T}} \rho^{\frac{\alpha-1}{2}} \partial_{xx} w \rho^{\frac{\alpha-1}{2}} \partial_{xx} \tilde{u} dx \bigg| \le C(\delta) \| \rho^{\alpha-1} \|_{L^\infty} \| \partial_{xx} w \|^2_{L^2} +\delta \| \rho^{\frac{\alpha-1}{2}} \partial_{xx} \tilde{u} \|^2_{L^2} ,
\end{equation}
\begin{equation}
I_{6,3} \le C(\delta) \| \frac{1}{\rho^{\frac{\alpha-1}{2}}} \|^2_{L^\infty} \| w \|^2_{L^2} +\delta \| \rho^{\frac{\alpha-1}{2}} \partial_{xx} \tilde{u} \|^2_{L^2},
\end{equation}
\begin{equation}
I_{6,4} \le C(\delta) \| \frac{1}{\rho^{\frac{\alpha-1}{2}}} \|^2_{L^\infty} \bigg( \| \partial_x \tilde{u} \|^2_{L^2} \| w \|^2_{L^\infty}+ \| \tilde{u} \|^2_{L^\infty} \| \partial_x w \|^2_{L^2} \bigg) +\delta \| \rho^{\frac{\alpha-1}{2}} \partial_{xx} \tilde{u} \|^2_{L^2},
\end{equation}
\begin{equation}
I_{6,5} \le C(\delta) \| \frac{1}{\rho^{\frac{\alpha-1}{2}}} \|^2_{L^\infty}  \| w \|^2_{L^\infty} \| \partial_x w \|^2_{L^2}+\delta \| \rho^{\frac{\alpha-1}{2}} \partial_{xx} \tilde{u} \|^2_{L^2}.
\end{equation}
Finally, again with an extensive use of H\"{o}lder, Young and Sobolev inequalities we have  
\begin{equation}
\begin{split}
| I_7 | & \le  \delta \| \rho^{\frac{\alpha-1}{2}} \partial_{xx} \tilde{u} \|^2_{L^2}+\| \partial_x A(\rho) \|^2_{L^2} \| \partial_{xx} \rho^{\frac{\alpha+\beta+1}{2}} \|^2_{L^2} \| \rho^2 \|_{L^\infty}.
\end{split}
\end{equation}
On the other hand, since $\partial_x \sqrt{\rho k(\rho)}= \big( \frac{\beta+1}{2} \big) A(\rho)$ then we also have
\begin{equation}
\begin{split}
| I_8 | & = \bigg|  \int_{\mathbb{T}} \partial_x (\sqrt{ \rho k(\rho)}) \partial_x \tilde{u}\partial_{xx} A(\rho)dx \bigg| = \bigg|  \int_{\mathbb{T}} \partial_x \bigg( \partial_x (\sqrt{ \rho k(\rho)}) \partial_x \tilde{u} \bigg) \partial_xA(\rho)dx \bigg| \\ & \le  \delta \| \rho^{\frac{\alpha-1}{2}} \partial_{xx} \tilde{u} \|^2_{L^2}+\| \partial_x A(\rho) \|^2_{L^2} \| \partial_{xx} \rho^{\frac{\alpha+\beta+1}{2}} \|^2_{L^2} \| \rho^{\frac{-2\alpha+1}{2}} \|^2_{L^\infty}
\end{split}
\end{equation}
\begin{equation}\label{I9}
\begin{split}
| I_9 | & \le  \delta \| \rho^{\frac{\alpha-1}{2}} \partial_{xx} \tilde{u} \|^2_{L^2}+\| \partial_x A(\rho) \|^2_{L^2} \| \partial_{xx} \rho^{\frac{\alpha+\beta+1}{2}} \|^2_{L^2} \| \rho^{\frac{-2\alpha+1}{2}} \|^2_{L^\infty}
\end{split}
\end{equation}
Summing up all the estimates \eqref{I1}-\eqref{I9}  and choosing properly the constant $\delta$ we get 
\begin{equation}
\begin{split}
& \dfrac{1}{2} \dfrac{d}{dt} \bigg( \| \partial_x A(\rho) \|^2_{L^2} + \| \partial_x \tilde{u} \|^2_{L^2}  \bigg) +  c\int_{\mathbb{T}} \rho^{\alpha-1} | \partial_{xx} \tilde{u} |^2 dx \\ & \le \bigg( \| \partial_{xx} \rho^{\frac{\alpha+\beta+1}{2}} \|^2_{L^2}+ \| \partial_x w \|^2_{L^2}+ \| \partial_{xx} w \|^2_{L^2} + \| \partial_x \tilde{u} \|^2_{L^2} + \| \tilde{u} \|^2_{L^2}  \bigg) \bigg( \| \partial_x A(\rho) \|^2_{L^2} + \| \partial_x \tilde{u} \|^2_{L^2}  \bigg)  \\ & + \bigg( \| \rho^{\frac{2 \gamma -\alpha-3}{2}} \|_{L^\infty}^2 \| \partial_x \rho \|^2_{L^2} + \| \rho^{\frac{\alpha-3}{2}} \|_{L^\infty}^2 \| \partial_x \rho \|^2_{L^2} \| \partial_x w \|^2_{L^2}+ \| \rho^{\alpha-1} \|^2_{L^\infty} \| \partial_{xx} w \|^2_{L^2} \\ & + \| w \|^2_{L^2} \| \frac{1}{\rho^{\frac{\alpha-1}{2}}} \|_{L^\infty}^2  + \| \frac{1}{\rho^{\frac{\alpha-1}{2}}} \|^2_{L^\infty} \| w \|^2_{L^\infty} \| \partial_x w \|^2_{L^2} + \| \partial^3_x w \|^2_{L^2} \bigg),
\end{split}
\end{equation}
that can be expressed in the general form 
\begin{equation}
\begin{split}
& \dfrac{1}{2} \dfrac{d}{dt} \bigg( \| \partial_x A(\rho) \|^2_{L^2} + \| \partial_x \tilde{u} \|^2_{L^2}  \bigg) +  c\int_{\mathbb{T}} \rho^{\alpha-1} | \partial_{xx} \tilde{u} |^2 dx \\ & \le a(t) \bigg( \| \partial_x A(\rho) \|^2_{L^2} + \| \partial_x \tilde{u} \|^2_{L^2}  \bigg)+b(t),
\end{split}
\end{equation}
with $a(t), \; b(t) \in L^1(0,T).$ Thus by applying a standard Gronwall argument we get our claim. To conclude, the proof of \eqref{dissip A} follows by using \eqref{eq A(rho)}-\eqref{random pde u systm}, integration by parts and \eqref{H2 estimate}.
\end{proof}
\section{Relative entropy and applications}\label{Sec4}
\noindent
This Section is devoted to a relative entropy approach which provides a continuous dependence on the initial data result.
The analysis of relative entropy functionals for compressible fluid flows is a widely used method providing stability results, weak-strong uniqueness properties \cite{Giess}, \cite{Feir Nov}, \cite{Feir. Don.} and it also arises in the study of singular limits \cite{Bresch0}, \cite{Feir Nov2}, \cite{Ant5}.  Our analysis extends to the stochastic case and general exponents $\alpha$ and $\beta$ the approach given in \cite{Bresch0}. Throughout this Section we assume that conditions \eqref{SCC} and \eqref{NV} holds, so that the pathwise regularity estimates of Section \ref{Sec3} are satisfied.
\\
\\
We define the energy density $$\eta:= \dfrac{1}{2} \rho u^2+ h(\rho)+ k(\rho) \dfrac{ | \partial_x \rho |^2}{2}$$ and we rewrite it in the new variables $(\rho,m,J)$  in which it is a convex function $$\eta(\rho, m, J)=\dfrac{1}{2} \dfrac{|m|^2}{\rho}+ h(\rho)+ \dfrac{1}{2} \dfrac{ | J |^2}{\rho}, \quad m= \rho u, \quad J= \rho A(\rho).$$
Then, given $(\rho,u)$ and $(\bar{\rho}, \bar{u})$ be two solutions of \eqref{main system}, we can define the relative entropy functional as
\begin{equation}
H_r(\rho,u,A(\rho) | \bar{\rho}, \bar{u}, A(\bar{\rho})):= \int_{\mathbb{T}} \eta(\rho, m, J | \bar{\rho}, \bar{m}, \bar{J}) dx,
\end{equation}
with 
\begin{equation}
\begin{split}
\eta(\rho, m, J | \bar{\rho}, \bar{m}, \bar{J}) & = \eta(\rho,m,J)- \eta (\bar{\rho},\bar{m},\bar{J})- \eta_\rho (\bar{\rho},\bar{m},\bar{J})(\rho-\bar{\rho}) \\ & -\eta_m (\bar{\rho},\bar{m},\bar{J})(m-\bar{m})-\eta_J (\bar{\rho},\bar{m},\bar{J})(J-\bar{J}),
\end{split}
\end{equation}
After a straightforward computation we have that
\begin{equation}\label{Relative entropy}
H_r(\rho,u,A(\rho) | \bar{\rho}, \bar{u}, A(\bar{\rho}))= \int_{\mathbb{T}} \bigg[ \dfrac{1}{2} \rho(u-\bar{u})^2+ h(\rho | \bar{\rho})+ \dfrac{1}{2} \rho \big( A(\rho)-A(\bar{\rho}) \big)^2 \bigg] dx,
\end{equation}
where 
\begin{equation*}
h(\rho | \bar{\rho})= h(\rho)-h(\bar{\rho})-h'(\bar{\rho})(\rho-\bar{\rho}).
\end{equation*}
We highlight that by assuming $\gamma=1$ and zero capillarity coefficient,  the expression of \eqref{Relative entropy} reduces to the one considered in \cite{Coti}. Furthermore, the functional \eqref{Relative entropy} is not symmetric and the following dichotomy holds 
\begin{equation*}
H_r(\rho,u,A(\rho) | \bar{\rho}, \bar{u}, A(\bar{\rho}))=0 \iff (\rho,u)=( \bar{\rho}, \bar{u}).
\end{equation*}
Our interest is focused on a comparison between two strong solutions having corresponding initial datum in $\mathcal{X}.$ This analysis leads to the pathwise uniqueness of solutions given in Proposition \ref{Prop uniqueness} and to a stability result determining the continuous dependence with respect to the initial data Proposition \ref{continuous dependence on the initial data}.  
We start with the following Lemma which provides an upper and lower bound for the relative entropy. 
\begin{lemma}
Let $H_r(\rho,u,A(\rho) | \bar{\rho}, \bar{u}, A(\bar{\rho}))$ be defined as in \eqref{Relative entropy}, then the following estimates hold
\begin{equation}\label{upper H}
\begin{split}
H_r(\rho,u,A(\rho) | \bar{\rho}, \bar{u}, A(\bar{\rho})) & \le \dfrac{1}{2} \| \rho \|_{L^\infty} \| u-\bar{u} \|^2_{L^2}+ \dfrac{\gamma a^2}{2} \max \big\{\| \rho^{\gamma-2} \|_{L^\infty},  \| \bar{\rho}^{\gamma-2} \|_{L^\infty} \big\} \| \rho-\bar{\rho}\|^2_{L^2} \\ & + \dfrac{1}{2} \| \rho \|_{L^\infty} \| A(\rho)- A(\bar{\rho}) \|^2_{L^2} 
\end{split}
\end{equation}
and 
\begin{equation}\label{lower H}
\begin{split}
& \min \bigg\{ \frac{1}{2},\frac{\gamma a^2}{2} \bigg\} \min \bigg\{ \| \rho^{\gamma-2} \|_{L^\infty}, \| \bar{\rho}^{\gamma-2} \|_{L^\infty} \bigg\} \bigg[ \| u-\tilde{u} \|^2_{L^2} + \| \rho -\bar{\rho} \|^2_{L^2} + \| A(\rho)- A(\bar{\rho}) \|^2_{L^2}  \bigg] \\ & \le H_r(\rho,u,A(\rho) | \bar{\rho}, \bar{u}, A(\bar{\rho})).
\end{split}
\end{equation}
\end{lemma}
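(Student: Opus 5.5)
The argument is a term-by-term comparison of the three summands in \eqref{Relative entropy}; the kinetic and capillary parts are handled pointwise, and only the pressure part $h(\rho|\bar\rho)$ needs a Taylor expansion.

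\emph{Upper bound \eqref{upper H}.} For the kinetic and capillary terms it suffices to bound the weight $\rho$ by $\|\rho\|_{L^\infty}$:
\[
\tfrac12\int_{\mathbb{T}}\rho(u-\bar u)^2\,dx\le \tfrac12\|\rho\|_{L^\infty}\|u-\bar u\|^2_{L^2},
\]
and similarly for $\tfrac12\rho\big(A(\rho)-A(\bar\rho)\big)^2$. For the pressure term, since $h\in C^2(0,\infty)$ with $h''(s)=\gamma a^2 s^{\gamma-2}$ (this also holds for $\gamma=1$, where $h''=a^2/s$), Taylor's formula with integral remainder gives
\[
h(\rho|\bar\rho)=(\rho-\bar\rho)^2\int_0^1(1-\theta)\,h''\big(\theta\rho+(1-\theta)\bar\rho\big)\,d\theta .
\]
The intermediate point $s=\theta\rho+(1-\theta)\bar\rho$ lies between $\rho$ and $\bar\rho$. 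Because $s\mapsto s^{\gamma-2}$ is monotone, $s^{\gamma-2}\le\max\{\rho^{\gamma-2},\bar\rho^{\gamma-2}\}$ pointwise. Integrating, and using $\int_0^1(1-\theta)\,d\theta=\tfrac12$, gives the middle term in \eqref{upper H}.

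\emph{Lower bound \eqref{lower H}.} The same Taylor formula with the monotonicity reversed gives pointwise
\[
h(\rho|\bar\rho)\ge\tfrac{\gamma a^2}{2}\min\{\rho^{\gamma-2},\bar\rho^{\gamma-2}\}(\rho-\bar\rho)^2 .
\]
For the kinetic and capillary terms we bound the weight $\rho$ from below by $\inf\rho$. The main obstacle is bookkeeping: the stated constant $\min\{\|\rho^{\gamma-2}\|_{L^\infty},\|\bar\rho^{\gamma-2}\|_{L^\infty}\}$ must be read as a lower bound for these weights. I would interpret it as $\min\{\inf_{\mathbb{T}}\rho^{\gamma-2},\inf_{\mathbb{T}}\bar\rho^{\gamma-2}\}$, and also use $\inf\rho$ for the other two terms. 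Both are strictly positive by \eqref{upper rho path}--\eqref{lower rho path}, since the comparison is between strong solutions with densities bounded above and away from zero.

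To get a single common constant in front of the bracket, I would reduce all three weights to one lower bound $c_*$. This is possible because, on the relevant set, $\rho$ and $\bar\rho$ are comparable to quantities controlled by Proposition \ref{Prop path BD entropy}. If the exact displayed constant does not match, the honest statement is: there is a constant $c>0$ depending only on $\gamma,a$ and the upper and lower bounds of $\rho,\bar\rho$ such that $c\big[\|u-\bar u\|^2_{L^2}+\|\rho-\bar\rho\|^2_{L^2}+\|A(\rho)-A(\bar\rho)\|^2_{L^2}\big]\le H_r$. This is what the stability argument of Proposition \ref{continuous dependence on the initial data} needs. Summing the three pointwise lower bounds and integrating over $\mathbb{T}$ then concludes the proof.
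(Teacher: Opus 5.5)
Your upper bound is the paper's argument: Taylor-expand $h$ using $h''(s)=\gamma a^2 s^{\gamma-2}$, and bound the weight $\rho$ by $\|\rho\|_{L^\infty}$. You write it more precisely, via the integral remainder and the monotonicity of $s\mapsto s^{\gamma-2}$.

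For the lower bound your suspicion is justified. The displayed constant in \eqref{lower H} is not correct as written, and the paper reaches it through unjustified steps:
\begin{enumerate}
\item it drops the weight $\rho$ in $\tfrac12\rho(u-\bar u)^2$ and $\tfrac12\rho(A(\rho)-A(\bar\rho))^2$, which is legitimate only if $\rho\ge 1$;
\item it replaces the pointwise $\min\{\rho^{\gamma-2},\bar\rho^{\gamma-2}\}$ by the minimum of the two suprema;
\item it multiplies the kinetic and capillary terms by that factor as well.
\end{enumerate}
Here is a concrete counterexample to the literal inequality. Take $\gamma=2$, so the constant is $\min\{\tfrac12,a^2\}$. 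Take $\bar\rho=\rho$, so the pressure and capillary parts vanish, and let $u-\bar u$ be supported where $\rho\le\delta$. Then $H_r\le\tfrac{\delta}{2}\|u-\bar u\|^2_{L^2}$, which is smaller than the left-hand side as soon as $\delta<\min\{1,2a^2\}$.

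Your corrected version is correct, with constant
\[
c=\min\Big\{\tfrac12\inf_{\mathbb T}\rho,\ \tfrac{\gamma a^2}{2}\min\big\{\inf_{\mathbb T}\rho^{\gamma-2},\inf_{\mathbb T}\bar\rho^{\gamma-2}\big\}\Big\}.
\]
It is also all that the uniqueness and stability proofs need. There $1/c$ is controlled by the pathwise bounds \eqref{upper rho path}--\eqref{lower rho path} on both densities. These depend only on the entropy of the initial data, so they are uniform in $n$ in Proposition \ref{continuous dependence on the initial data}.

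You do not need any comparability argument to obtain a single constant; just take the minimum of the three coefficients. Positivity at a fixed time is automatic, since $\rho,\bar\rho\in H^2(\mathbb T)$ are continuous and strictly positive on a compact set.
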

\begin{proof}
First we observe that by Taylor theorem we have
\begin{equation}\label{Taylor F}
\dfrac{m}{2} ( \rho-\bar{\rho})^2 \le h(\rho | \bar{\rho}) \le \dfrac{M}{2} ( \rho-\bar{\rho})^2,
\end{equation}
with $$m \le h{''}(\rho) \le M.$$
Then, from \eqref{Taylor F} we deduce 
\begin{equation}
H_r \le \int_{\mathbb{T}} \bigg[ \dfrac{1}{2} \rho(u-\bar{u})^2+ \dfrac{\gamma a^2}{2} \max \bigg\{ \rho^{\gamma-2}, \bar{\rho}^{\gamma-2} \bigg\} (\rho-\bar{\rho})^2+ \dfrac{1}{2} \rho \big( A(\rho)-A(\bar{\rho}) \big)^2 \bigg] dx,
\end{equation}
hence 
\begin{equation}
\begin{split}
H_r  & \le  \dfrac{1}{2} \| \rho \|_{L^\infty} \|u-\bar{u}\|^2_{L^2} + \dfrac{\gamma a^2}{2} \max \bigg\{ \| \rho^{\gamma-2}\|_{L^\infty} , \|\bar{\rho}^{\gamma-2} \|_{L^\infty} \bigg\} \| \rho-\bar{\rho} \|^2_{L^2} \\ & + \dfrac{1}{2} \| \rho \|_{L^\infty} \| A(\rho)-A(\bar{\rho}) \|^2_{L^2},
\end{split}
\end{equation}
which is exactly \eqref{upper H}. 
Similarly,  in order to deduce \eqref{lower H}, we use again \eqref{Taylor F} to infer 
\begin{equation}
H_r \ge \int_{\mathbb{T}} \bigg[ \dfrac{1}{2} \rho(u-\bar{u})^2+ \dfrac{\gamma a^2}{2} \min \bigg\{ \rho^{\gamma-2}, \bar{\rho}^{\gamma-2} \bigg\} (\rho-\bar{\rho})^2+ \dfrac{1}{2} \rho \big( A(\rho)-A(\bar{\rho})  \big)^2 \bigg] dx,
\end{equation}
from which we have 
\begin{equation*}
\begin{split}
H_r & \ge \int_{\mathbb{T}} \bigg[ \dfrac{1}{2} (u-\bar{u})^2+ \dfrac{\gamma a^2}{2} \min \bigg\{ \rho^{\gamma-2}, \bar{\rho}^{\gamma-2} \bigg\} (\rho-\bar{\rho})^2+ \dfrac{1}{2} \big( A(\rho)-A(\bar{\rho})  \big)^2 \bigg] dx \\ & \ge \min \bigg\{ \frac{1}{2}, \frac{\gamma a^2}{2} \bigg\} \min \bigg\{ \| \rho^{\gamma-2} \|_{L^\infty}, \| \bar{\rho}^{\gamma-2} \|_{L^\infty} \bigg\} \bigg[ \| u-\tilde{u} \|^2_{L^2} + \| \rho -\bar{\rho} \|^2_{L^2} + \| A(\rho)- A(\bar{\rho}) \|^2_{L^2}  \bigg].
\end{split}
\end{equation*}
\end{proof}
\noindent
We continue our analysis with the following uniqueness result.
\begin{proposition}\label{Prop uniqueness}
Let $(\rho,u)$ and $(\bar{\rho}, \bar{u})$ be two strong solutions of \eqref{main system} with the same initial condition $(\rho_0,u_0) \in \mathcal{X}.$ Then the following pathwise uniqueness result holds
$$ \mathbb{P} \bigg\{ (\rho(t),u(t))=(\bar{\rho}(t), \bar{u}(t)), \; \forall t \ge 0 \bigg\}=1.$$
\end{proposition}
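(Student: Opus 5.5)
The plan is a pathwise relative entropy argument combined with Gronwall. Since the noise is additive in the velocity formulation, I pass to the random PDE \eqref{random pde} for both solutions with the same $w=\sigma W$. For fixed $\omega$ outside a null set, $(\rho,\tilde u)$ and $(\bar\rho,\tilde{\bar u})$ then solve the same deterministic system with random but regular coefficients. Moreover $u-\bar u=\tilde u-\tilde{\bar u}$, so the relative entropy \eqref{Relative entropy} is unchanged when written in the tilde variables. Fix $T>0$. By Proposition \ref{Prop path BD entropy} and Proposition \ref{Prop path HS est}, on a full-measure set both solutions satisfy the following bounds:
\begin{itemize}
\item pathwise upper and lower bounds on the density, $\|\rho\|_{L^\infty}+\|\rho^{-1}\|_{L^\infty}\le C(\omega,T)$, and likewise for $\bar\rho$;
\item $\sup_t(\|\partial_x A\|_{L^2}+\|\partial_x\tilde u\|_{L^2})\le C$;
\item $\int_0^T(\|\partial_{xx}A\|^2_{L^2}+\|\partial_{xx}\tilde u\|^2_{L^2})\,dt\le C$.
\end{itemize}
In particular $\partial_x\bar u$ and $\partial_x A(\bar\rho)$ lie in $L^2(0,T;L^\infty)$ by Sobolev embedding.

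The core step is to compute $\frac{d}{dt}H_r(\rho,u,A(\rho)\,|\,\bar\rho,\bar u,A(\bar\rho))$ using the $(\rho,u,A)$ formulation \eqref{eq A(rho)}--\eqref{random pde u systm} for both solutions. This is the relative entropy balance the paper defers to an appendix. I test the difference of the velocity equations with $\rho(u-\bar u)$, the difference of the $A$ equations with $\rho(A-\bar A)$, and use the continuity equation for the $h(\rho|\bar\rho)$ part. The key structural point, as in the $H^1$ estimate, is that the principal capillary terms $\partial_x(\sqrt{\rho k}\,\partial_x u)$ and $\partial_x(\sqrt{\rho k}\,\partial_x A)$ form a skew-adjoint pair, so their top-order contributions cancel after integration by parts. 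The viscous term produces the dissipation $\int\mu(\rho)|\partial_x(u-\bar u)|^2$ and the damping produces $\varepsilon\int\rho|u-\bar u|^2$; both are signed and can be dropped. What remains are commutator-type terms in which derivatives fall on $\bar\rho$, $\bar u$, $A(\bar\rho)$, or on the coefficient differences $\sqrt{\rho k(\rho)}-\sqrt{\bar\rho k(\bar\rho)}$. The noise-related terms $f$ and $g$ enter only through the same $w$ and therefore appear only in the difference form $\partial_x((\rho-\bar\rho)w)$ and similar.

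Each remainder term I bound by $a(t)\,H_r$ plus a small multiple of the viscous dissipation, with $a\in L^1(0,T)$ built from $\|\partial_x\bar u\|_{L^\infty}$, $\|\partial_x A(\bar\rho)\|_{L^\infty}$, $\|w\|_{W^{2,\infty}}$ and powers of the density bounds. Differences of smooth functions of the density, such as $\mu(\rho)-\mu(\bar\rho)$, $\sqrt{\rho k(\rho)}-\sqrt{\bar\rho k(\bar\rho)}$ and $p(\rho)-p(\bar\rho)$, are controlled pointwise by $C|\rho-\bar\rho|$ because $\rho,\bar\rho$ are bounded above and below. The quantity $\|\rho-\bar\rho\|^2_{L^2}$ is controlled by $H_r$ through the lower bound \eqref{lower H}, whose constant is pathwise positive on $[0,T]$. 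Where a derivative $\partial_x(\rho-\bar\rho)$ appears, I note that $\partial_x(\rho-\bar\rho)$ is a combination of $A(\rho)-A(\bar\rho)$ and $(\rho-\bar\rho)\partial_x\bar\rho$ with bounded coefficients, so it is also controlled by $H_r$.

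Gronwall then gives $H_r(t)\le \exp\big(\int_0^t a\big)\,H_r(0)=0$ on $[0,T]$ almost surely. By \eqref{lower H}, $u=\bar u$ and $\rho=\bar\rho$ a.e. for all $t\le T$. Taking $T\in\mathbb N$ and a countable intersection, and using continuity in time, yields the claim for all $t\ge0$. The main obstacle I expect is the bookkeeping in the balance for general $\beta$: making sure that every third-order density term either cancels through the skew structure or lands on $\bar\rho$, where the $L^2_tH^2_x$ bound on $A(\bar\rho)$ from \eqref{dissip A} makes it integrable in time. If a term with $\partial_x(u-\bar u)$ times something in $L^\infty$ arises, I would absorb it into the viscous dissipation, which is coercive thanks to $\rho^{-1}\in L^\infty$.
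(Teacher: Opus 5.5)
Your proposal is correct and follows essentially the paper's route. The paper also uses a pathwise relative-entropy estimate for $H_r$ built on the augmented $(\rho,u,A)$ system. There too the top-order capillary terms cancel by skew-adjointness, the remainders are bounded by an $L^1_t$ coefficient times $H_r$ plus a fraction of the viscous dissipation, and Gronwall is applied from $H_r(0)=0$. The only cosmetic differences are that you work in the shifted variables $\tilde u$, whereas the paper just notes that the additive noise cancels in the equation for $u-\bar u$, and that you control density differences in $L^\infty$ through $\|\rho-\bar\rho\|_{H^1}\lesssim H_r^{1/2}$; this plays the same role as the paper's Sobolev bound on $\mu'_k(\rho)-\mu'_k(\bar\rho)$ in the term $I_9$.
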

\begin{proof}
Our goal is to derive an inequality of the form 
\begin{equation*}
\dfrac{d}{dt} H_r(\rho,u,A(\rho) | \bar{\rho}, \bar{u}, A(\bar{\rho}))\le g(t) H_r(\rho,u,A(\rho) | \bar{\rho}, \bar{u}, A(\bar{\rho})),
\end{equation*}
with $g(t)$ integrable in time and to apply a Gronwall argument. Note that despite both $(\rho,u)$ and $(\bar{\rho},\bar{u})$ are regular solutions, by virtue of further applications as the stability result given in Proposition \ref{continuous dependence on the initial data}, we avoid the dependence of $g(t)$ on high order derivatives norms of $(\rho,u).$ In particular, all the norms involving the solution $(\rho,u)$ are controlled in terms of its entropy $\mathcal{E}(\rho,u).$
\\
\\
First we introduce the quantity $\mu'_k(\rho):=\sqrt{\rho k(\rho)}$ so that system \eqref{main system} can be rewritten as 
\begin{equation*}
\begin{cases}
\partial_t \rho + \partial_x (\rho u)=0, \\
\text{d}(\rho u) + [\partial_x (\rho u^2)+ \partial_x p(\rho)] \text{d}t= [\partial_x (\mu(\rho) \partial_x u) +\partial_x ( \rho \mu'_k(\rho) \partial_x A(\rho))- \varepsilon \rho u]\text{d}t+ \rho \sigma(x) \text{d}W,\\
\partial_t (\rho A(\rho))+ \partial_x (\rho u A(\rho))=- \partial_x (\rho \mu'_k(\rho)\partial_x u)
\end{cases}
\end{equation*}
which is usually also referred as augmented system.
Then, we divide by $\rho$ in the momentum equation to deduce 
\begin{equation*}
\text{d} u + \big[u \partial_x u + \frac{\partial_x p(\rho)}{\rho} \big]\text{d}t= \frac{1}{\rho} \big[\partial_x (\mu(\rho) \partial_x u) + \partial_x \mathcal{K}-\varepsilon \rho u  \big]\text{d}t+ \sigma(x) \text{d}W,
\end{equation*}
and we notice that the difference $(u-\bar{u})$ satisfies
\begin{equation*}
\partial_t (u-\bar{u}) + u \partial_x u - \bar{u} \partial_x \bar{u}+ \frac{\partial_x p(\rho)}{\rho}-\frac{\partial_x p(\bar{\rho})}{\bar{\rho}}= \frac{\partial_x (\mu(\rho) \partial_x u)}{\rho}- \frac{\partial_x (\mu(\bar{\rho}) \partial_x \bar{u})}{\bar{\rho}}+ \frac{\partial_x \mathcal{K}}{\rho}-\frac{\partial_x \mathcal{\bar{K}}}{\bar{\rho}}-\varepsilon (u-\bar{u}),
\end{equation*}
thus the contributions of the stochastic forcing terms cancels. Then we focus on the derivative of $H_r(\rho,u,A(\rho) | \bar{\rho}, \bar{u}, A(\bar{\rho}))$ and we state the following equality
\begin{equation}\label{eq rel entropy }
\begin{split}
& \dfrac{d}{dt} H_r(t) +  \varepsilon \ \int_{\mathbb{T}} \rho (u-\bar{u})^2 dx =  \int_{\mathbb{T}} \partial_x (\mu(\rho) \partial_x u)(u-\bar{u})dx - \int_{\mathbb{T}} \partial_x (\mu(\bar{\rho})\partial_x\bar{u}) (u-\bar{u}) \dfrac{\rho}{\bar{\rho}}dx \\ & - \int_{\mathbb{T}} \rho \partial_x \bar{u} (u-\bar{u})^2 dx + \int_{\mathbb{T}} \rho (A-\bar{A}) \partial_x \bar{A} (\bar{u}- u) dx - \int_{\mathbb{T}}  p(\rho| \bar{\rho}) \partial_x \bar{u} dx  \\ & -  \int_{\mathbb{T}} \rho( A-\bar{A})^2 \partial_x \bar{u} dx+  \int_{\mathbb{T}} \rho (A-\bar{A}) \partial_x \bar{A} (u-\bar{u})dx \\ & -\int_{\mathbb{T}} \rho( {\mu_k}^{''}(\rho) \partial_x \rho -\mu^{''}_k(\bar{\rho}) \partial_x \bar{\rho})  \big[ (A-\bar{A}) \partial_x \bar{u} - (u-\bar{u}) \partial_x \bar{A} \big] dx \\ & - \int_{\mathbb{T}} \rho( {\mu_k}^{'}(\rho)-\mu^{'}_k(\bar{\rho}))  \big[ (A-\bar{A}) \partial_{xx} \bar{u} - (u-\bar{u}) \partial_{xx} \bar{A} \big]dx.
\end{split}
\end{equation}
For the proof of \eqref{eq rel entropy } we refer the reader to Appendix \ref{Appendix}.  Concerning the viscosity terms, we notice that after integration by parts
\begin{equation*}
\begin{split}
& \int_{\mathbb{T}} \partial_x (\mu(\rho) \partial_x u)(u-\bar{u})- \partial_x (\mu(\bar{\rho}\partial_x \bar{u}) (u-\bar{u}) \dfrac{\rho}{\bar{\rho}}dx \\ & = -\int_{\mathbb{T}} \mu(\rho) |\partial_x (u-\bar{u})|^2 dx+ \int_{\mathbb{T}} (u-\bar{u}) \bigg[ \partial_x (\mu(\rho) \partial_x \bar{u})-\dfrac{\rho \partial_x (\mu(\bar{\rho}) \partial_x \bar{u})}{\bar{\rho}} \bigg] dx \\ & = -\int_{\mathbb{T}} \mu(\rho) |\partial_x (u-\bar{u})|^2 dx +\int_{\mathbb{T}} (u-\bar{u}) \partial_{xx} \bar{u} \bigg[ \frac{\bar{\rho} \mu(\rho) - \rho \mu(\bar{\rho})}{\bar{\rho}} \bigg] dx \\ &  + \int_{\mathbb{T}} (u-\bar{u}) \partial_{x} \bar{u} \bigg[ \frac{\bar{\rho} \partial_x \mu(\rho) - \rho \partial_x \mu(\bar{\rho})}{\bar{\rho}} \bigg] dx.
\end{split}
\end{equation*}
Thus \eqref{eq rel entropy } can be rewritten as 
\begin{equation}\label{eq rel entropy visc with sign}
\begin{split}
& \dfrac{d}{dt} H_r(t)  + \varepsilon  \int_{\mathbb{T}} \rho (u-\bar{u})^2dx+ \int_{\mathbb{T}} \mu(\rho) |\partial_x (u-\bar{u})|^2 dx \\ & = \int_{\mathbb{T}} (u-\bar{u}) \partial_{xx} \bar{u} \bigg[ \frac{\bar{\rho} \mu(\rho) - \rho \mu(\bar{\rho})}{\bar{\rho}} \bigg] dx + \int_{\mathbb{T}} (u-\bar{u}) \partial_{x} \bar{u} \bigg[ \frac{\bar{\rho} \partial_x \mu(\rho) - \rho \partial_x \mu(\bar{\rho})}{\bar{\rho}} \bigg] dx \\ & - \int_{\mathbb{T}} \rho \partial_x \bar{u} (u-\bar{u})^2 dx + \int_{\mathbb{T}} \rho (A-\bar{A}) \partial_x \bar{A} (\bar{u}- u) dx - \int_{\mathbb{T}}  p(\rho| \bar{\rho}) \partial_x \bar{u} dx \\ & -\int_{\mathbb{T}} \rho( A-\bar{A})^2 \partial_x \bar{u} dx+ \int_{\mathbb{T}} \rho (A-\bar{A}) \partial_x \bar{A} (u-\bar{u})dx \\ & - \int_{\mathbb{T}} \rho( {\mu_k}^{''}(\rho) \partial_x \rho -\mu^{''}_k(\bar{\rho}) \partial_x \bar{\rho})  \big[ (A-\bar{A}) \partial_x \bar{u} - (u-\bar{u}) \partial_x \bar{A} \big] dx \\ & - \int_{\mathbb{T}} \rho( {\mu_k}^{'}(\rho)-\mu^{'}_k(\bar{\rho}))  \big[ (A-\bar{A}) \partial_{xx} \bar{u} - (u-\bar{u}) \partial_{xx} \bar{A} \big]dx \\ & = \sum_{i=1}^{9}  I_i.
\end{split}
\end{equation}
Now we start estimating the integrals in the right hand side of \eqref{eq rel entropy }.  In particular by using H\'{o}lder inequality and the mean value theorem we have 
\begin{equation*}
\begin{split}
|I_1| = & \int_{\mathbb{T}} (u-\bar{u}) \partial_{xx} \bar{u} \bigg[ \frac{\bar{\rho} \mu(\rho) - \rho \mu(\bar{\rho})}{\bar{\rho}} \bigg] dx=\int_{\mathbb{T}} (u-\bar{u}) \partial_{xx} \bar{u} \bigg[ \rho(\rho^{\alpha-1}-\bar{\rho}^{\alpha-1}) \bigg] dx \\ & \le \| u-\bar{u} \|_{L^\infty} \| \frac{1}{\bar{\rho}^{\frac{\alpha-1}{2}}} \|_{L^\infty} \| \bar{\rho}^{\frac{\alpha-1}{2}} \partial_{xx} \bar{u} \|_{L^2} \| \rho \|_{L^\infty} \| \rho-\bar{\rho} \|_{L^2} \max \bigg\{ \| \rho^{\alpha-2} \|_{L^\infty}, \| \bar{\rho}^{\alpha-2}\|_{L^\infty} \bigg\} \\ & \le \| u-\bar{u} \|_{L^2} \| \frac{1}{\bar{\rho}^{\frac{\alpha-1}{2}}} \|_{L^\infty} \| \bar{\rho}^{\frac{\alpha-1}{2}} \partial_{xx} \bar{u} \|_{L^2} \| \rho \|_{L^\infty} \| \rho-\bar{\rho} \|_{L^2} \max \bigg\{ \| \rho^{\alpha-2} \|_{L^\infty}, \| \bar{\rho}^{\alpha-2}\|_{L^\infty} \bigg\}  \\ & + \| \partial_x (u-\bar{u}) \|_{L^2} \| \frac{1}{\bar{\rho}^{\frac{\alpha-1}{2}}} \|_{L^\infty} \| \bar{\rho}^{\frac{\alpha-1}{2}} \partial_{xx} \bar{u} \|_{L^2} \| \rho \|_{L^\infty} \| \rho-\bar{\rho} \|_{L^2} \max \bigg\{ \| \rho^{\alpha-2} \|_{L^\infty}, \| \bar{\rho}^{\alpha-2}\|_{L^\infty} \bigg\},
\end{split}
\end{equation*}
thus by using \eqref{lower H} and Young inequality
\begin{equation}\label{I1 REL ENTR}
\begin{split}
& |I_1| \le g_1(t)H_r+ \delta \| \rho^{\frac{\alpha}{2}} \partial_x(u-\bar{u}) \|^2_{L^2},
\end{split}
\end{equation}
with 
\begin{equation*}
\begin{split}
g_1(t) & = \dfrac{\| \frac{1}{\bar{\rho}^{\frac{\alpha-1}{2}}} \|_{L^\infty}\| \bar{\rho}^{\frac{\alpha-1}{2}} \partial_{xx} \bar{u} \|_{L^2}\| \rho \|_{L^\infty} \max \bigg\{ \| \rho^{\alpha-2} \|_{L^\infty}, \| \bar{\rho}^{\alpha-2}\|_{L^\infty} \bigg\}}{{\min \bigg\{ \frac{1}{2},\frac{\gamma a^2}{2} \bigg\} \min \bigg\{ \| \rho^{\gamma-2} \|_{L^\infty}, \| \bar{\rho}^{\gamma-2} \|_{L^\infty} \bigg\}}} \\ & +\dfrac{C(\delta) \| \frac{1}{\bar{\rho}^{\frac{\alpha-1}{2}}} \|^2_{L^\infty}\| \bar{\rho}^{\frac{\alpha-1}{2}} \partial_{xx} \bar{u} \|^2_{L^2}\| \rho \|^2_{L^\infty} \max \bigg\{ \| \rho^{\alpha-2} \|_{L^\infty}, \| \bar{\rho}^{\alpha-2}\|_{L^\infty} \bigg\}^2 \| \frac{1}{\rho^{\frac{\alpha}{2}}} \|^2_{L^\infty}}{\min \bigg\{ \frac{1}{2},\frac{\gamma a^2}{2} \bigg\} \min \bigg\{ \| \rho^{\gamma-2} \|_{L^\infty}, \| \bar{\rho}^{\gamma-2} \|_{L^\infty} \bigg\}}.
\end{split} 
\end{equation*}
With similar lines of argument, we can estimate 
\begin{equation}
\begin{split}
|I_2| & = \bigg| \frac{\alpha}{\alpha-1}\int_{\mathbb{T}} (u-\bar{u}) \partial_{x} \bar{u} \rho \bigg[ \partial_x \rho^{\alpha-1}-\partial_x \bar{\rho}^{\alpha-1} \bigg] dx \bigg| \\ & = \bigg| \frac{2\alpha}{\beta+1}\int_{\mathbb{T}} (u-\bar{u}) \partial_{x} \bar{u} \rho \bigg[ \rho^{\frac{2\alpha-\beta-3}{2}} A(\rho)-\bar{\rho}^{\frac{2\alpha-\beta-3}{2}} A(\bar{\rho})\bigg] dx \bigg| \\ & \le C \int_{\mathbb{T}} |u-\bar{u}| | \partial_{x} \bar{u}| \rho \bigg[ \rho^{\frac{2\alpha-\beta-3}{2}} |A(\rho)-A(\bar{\rho})| + | A(\bar{\rho})|| \rho^{\frac{2\alpha-\beta-3}{2}}- \bar{\rho}^{\frac{2\alpha-\beta-3}{2}}\bigg] dx  \\ & \le \| u-\bar{u} \|_{L^2} \| \partial_x \bar{u} \|_{L^\infty} \| \rho \|_{L^\infty} \bigg[ \| \rho^{\frac{2\alpha-\beta-3}{2}} \|_{L^\infty} \| A(\rho)-A(\bar{\rho}) \|_{L^2} \\ & + \| A(\bar{\rho}) \|_{L^\infty} \max \big\{ \| \rho^{\frac{2\alpha-\beta-5}{2}} \|_{L^\infty}, \| \bar{\rho}^{\frac{2\alpha-\beta-5}{2}} \|_{L^\infty} \big\} \| \rho-\bar{\rho} \|_{L^2} \bigg] \\ & \le  g_2(t)H_r,
\end{split}
\end{equation}
with 
\begin{equation*}
\begin{split}
g_2(t) & =\dfrac{ \| \bar{\rho}^{\frac{\alpha-1}{2}} \partial_{xx} \bar{u} \|^2_{L^2} \| \frac{1}{\rho^{\frac{\alpha-1}{2}}} \|_{L^\infty} \| \rho \|_{L^\infty} \| \rho^{\frac{2\alpha-\beta-3}{2}} \|_{L^\infty}}{\min \bigg\{ \frac{1}{2},\frac{\gamma a^2}{2} \bigg\} \min \bigg\{ \| \rho^{\gamma-2} \|_{L^\infty}, \| \bar{\rho}^{\gamma-2} \|_{L^\infty} \bigg\}} \\ & + \dfrac{ \| \bar{\rho}^{\frac{\alpha-1}{2}} \partial_{xx} \bar{u} \|^2_{L^2} \| \frac{1}{\rho^{\frac{\alpha-1}{2}}} \|_{L^\infty} \| \rho \|_{L^\infty} \| \partial_{xx} \bar{\rho}^{\frac{\alpha+\beta+1}{2}} \| \bar{\rho}^{-\frac{\alpha}{2}} \|_{L^\infty} \max \big\{ \| \rho^{\frac{2\alpha-\beta-5}{2}} \|_{L^\infty}, \| \bar{\rho}^{\frac{2\alpha-\beta-5}{2}} \|_{L^\infty} \big\}}{\min \bigg\{ \frac{1}{2},\frac{\gamma a^2}{2} \bigg\} \min \bigg\{ \| \rho^{\gamma-2} \|_{L^\infty}, \| \bar{\rho}^{\gamma-2} \|_{L^\infty} \bigg\}}.
\end{split}
\end{equation*}
With direct computations we easily estimate the integrals $I_i, \; i=3,....,7$ as follows
\begin{equation}
|I_3| \le \| \partial_x \bar{u} \|_{L^\infty} H_r,
\end{equation}
\begin{equation}
|I_4| \le \| \partial_x \bar{A} \|_{L^\infty} H_r \le \| \partial_{xx} A(\bar{\rho})\|_{L^2} H_r,
\end{equation}
\begin{equation}
|I_5| \le \bigg| \int_{\mathbb{T}} p(\rho| \bar{\rho}) \partial_x \bar{u} dx \bigg| \le(\gamma-1)\| \partial_x \bar{u} \|_{L^\infty} H_r,
\end{equation}
\begin{equation}
|I_6| \le \| \partial_x \bar{u} \|_{L^\infty} H_r,
\end{equation}
\begin{equation}
|I_7| = |I_4| \le \| \partial_{xx} A(\bar{\rho})\|_{L^2} H_r.
\end{equation}
Concerning the terms $I_8$ and $I_9$ we observe that 
\begin{equation*}
\mu^{''}_k (\rho) \partial_x \rho= \bigg( \frac{\beta+1}{2} \bigg) A(\rho).
\end{equation*}
Thus we have 
\begin{equation}
|I_8| \le \big( \| \partial_x \bar{u} \|_{L^\infty} + \| \partial_x \bar{A} \|_{L^\infty} \big) H_r \le \big( \| \bar{\rho}^{\frac{\alpha-1}{2}}\partial_{xx} \bar{u} \|_{L^2} \| \frac{1}{\bar{\rho}^{\frac{\alpha-1}{2}}} \|_{L^\infty}  + \| \partial_{xx} \bar{A} \|_{L^2} \big) H_r
\end{equation}
The estimate of the last integral $I_9$ is more delicate since by virtue of the regularity constraint given by the phase space $\mathcal{X},$ then we do not control $\| \partial_{xx} \bar{u} \|_{L^\infty}$ and $\| \partial_{xx} \bar{A} \|_{L^\infty}.$ Thus we make use of the properties of the new variable $A(\rho),$ together with Sobolev embedding to infer that 
\begin{equation*}
\| \mu'_k(\rho)-\mu'_k (\bar{\rho}) \|_{L^\infty} \le \| \mu'_k(\rho)-\mu'_k (\bar{\rho}) \|_{L^2}+ \| A(\rho)-A(\bar{\rho}) \|_{L^2}
\end{equation*}
and again by using the mean value theorem we have 
\begin{equation}\label{generalized Bresch}
\| \mu'_k(\rho)-\mu'_k (\bar{\rho}) \|_{L^2} \le \max \big\{ \| \rho^{\frac{\beta-1}{2}} \|_{L^\infty},  \| \bar{\rho}^{\frac{\beta-1}{2}} \|_{L^\infty} \big\} \| \rho- \bar{\rho} \|_{L^2}, 
\end{equation}
By virtue of this considerations we have 
\begin{equation}\label{I9 REL ENTR}
\begin{split}
|I_9| & \le \| \rho \|_{L^\infty} \| \mu'_k(\rho)-\mu'_k (\bar{\rho}) \|_{L^\infty} \big( \| A- \bar{A} \|_{L^2} \| \bar{\rho}^{\frac{\alpha-1}{2}}\partial_{xx} \bar{u} \|_{L^2}\| \frac{1}{\bar{\rho}^{\frac{\alpha-1}{2}}} \|_{L^\infty} + \| u-\bar{u} \|_{L^2} \| \partial_{xx} \bar{A}\|_{L^2} \big) \\ & \le \dfrac{\| \rho \|_{L^\infty}\big(\| A- \bar{A} \|_{L^2} \| \bar{\rho}^{\frac{\alpha-1}{2}}\partial_{xx} \bar{u} \|_{L^2}\| \frac{1}{\bar{\rho}^{\frac{\alpha-1}{2}}} \|_{L^\infty} + \| u-\bar{u} \|_{L^2} \| \partial_{xx} \bar{A}\|_{L^2} \big)}{\min \bigg\{ \frac{1}{2},\frac{\gamma a^2}{2} \bigg\} \min \bigg\{ \| \rho^{\gamma-2} \|_{L^\infty}, \| \bar{\rho}^{\gamma-2} \|_{L^\infty} \bigg\}} H_r \\ & + \dfrac{\| \rho \|_{L^\infty}\big(\| A- \bar{A} \|_{L^2} \| \bar{\rho}^{\frac{\alpha-1}{2}}\partial_{xx} \bar{u} \|_{L^2}\| \frac{1}{\bar{\rho}^{\frac{\alpha-1}{2}}} \|_{L^\infty} + \| u-\bar{u} \|_{L^2} \| \partial_{xx} \bar{A}\|_{L^2} \big) \max \big\{ \| \rho^{\frac{\beta-1}{2}} \|_{L^\infty},  \| \bar{\rho}^{\frac{\beta-1}{2}} \|_{L^\infty} \big\}}{\min \bigg\{ \frac{1}{2},\frac{\gamma a^2}{2} \bigg\} \min \bigg\{ \| \rho^{\gamma-2} \|_{L^\infty}, \| \bar{\rho}^{\gamma-2} \|_{L^\infty} \bigg\}} H_r
\end{split}
\end{equation}
Summing up \eqref{I1 REL ENTR}-\eqref{I9 REL ENTR} and choosing properly the constant $\delta$ we have 
\begin{equation}\label{Gronw uniqueness}
\dfrac{d}{dt} H_r(t)+\varepsilon \int_{\mathbb{T}} \rho(u-\bar{u})^2 dx +c_1  \int_{\mathbb{T}} \mu(\rho) |\partial_x(u-\bar{u})|^2 dx \le  g(t) H_r 
\end{equation}
with $c_1> 0,$ $g(t):= \sum_{i} g_i (t),$ and $g_i$ being the coefficient in front of $H_r$ in the inequalities \eqref{I1 REL ENTR}-\eqref{I9 REL ENTR}.
We notice again that $g(t)$ is integrable in time and it consists of a sum of norms contained in the dissipation functional $\mathcal{D}[\bar{\rho},\bar{u}],$ $ \int_{\mathbb{T}} \bar{\rho}^{\alpha-1} | \partial_{xx} \bar{u} |^2 dx, \; \|\partial_{xx} \bar{A}\|_{L^2}$ and also norms of the solution $(\rho,u)$  which can be controlled by the energy $\mathcal{E}(\rho,u).$
Finally,  since $(\rho_0,u_0)=(\bar{\rho}_0,\bar{u}_0)$ we use the standard Gronwall lemma to deduce that $H_r(\rho,u,A(\rho) | \bar{\rho}, \bar{u}, A(\bar{\rho}))=0 \quad \mathbb{P}-a.s. $ and hence $(\rho(t),u(t))=(\bar{\rho}(t), \bar{u}(t)), \; \mathbb{P}-a.s..$ This concludes the proof.
\end{proof}
\noindent
Our next goal is to investigate the continuous dependence with respect to the initial data. To this purpose we use Proposition \ref{Prop uniqueness} to infer the following result.
\begin{proposition}\label{continuous dependence on the initial data}
Let $\{(\rho^n, u^n) \}_{n \in \mathbb{N}}$ be a sequence of solutions to \eqref{main system} with initial conditions $\{(\rho_0^n, u_0^n) \}_{n \in \mathbb{N}}$ and let $(\rho,u)$ be a solution to \eqref{main system} with initial datum $(\rho_0,u_0) \in \mathcal{X}.$ Then 
\begin{equation}
(\rho^n,u^n) \rightarrow (\rho,u) \quad a.s. \; in \; H^1(\mathbb{T}) \times L^2(\mathbb{T})
\end{equation}
provided that 
\begin{equation}\label{conv initial data}
(\rho_0^n,u_0^n) \rightarrow (\rho_0,u_0) \quad a.s. \; in \; H^1(\mathbb{T}) \times L^2(\mathbb{T}), \quad \forall t \ge 0.
\end{equation}
\end{proposition}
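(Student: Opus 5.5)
We take $(\bar\rho,\bar u)=(\rho,u)$ as the reference strong solution and compare each $(\rho^n,u^n)$ with it through the relative entropy $H_r^n(t):=H_r(\rho^n,u^n,A(\rho^n)\,|\,\rho,u,A(\rho))$. The plan is to repeat the computation of Proposition \ref{Prop uniqueness} verbatim. The identity \eqref{eq rel entropy } holds pathwise because the additive noise cancels in the equation for $u^n-u$. This yields
\begin{equation*}
\frac{d}{dt}H_r^n+\varepsilon\int_{\mathbb{T}}\rho^n(u^n-u)^2dx+c_1\int_{\mathbb{T}}\mu(\rho^n)|\partial_x(u^n-u)|^2dx\le g_n(t)H_r^n ,
\end{equation*}
and Gronwall then gives $\sup_{[0,T]}H_r^n(t)\le H_r^n(0)\exp\big(\int_0^T g_n\,dt\big)$. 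In that proof, $g$ was arranged to contain high-order norms only of the reference solution: $\|\bar\rho^{\frac{\alpha-1}{2}}\partial_{xx}\bar u\|_{L^2}^2$, $\|\partial_{xx}A(\bar\rho)\|_{L^2}$, $\|\partial_x\bar u\|_{L^\infty}$, and $\mathcal{D}[\bar\rho,\bar u]$. The solution $(\rho^n,u^n)$ enters only through $\|\rho^n\|_{L^\infty}$, $\|(\rho^n)^{-1}\|_{L^\infty}$ and similar quantities. These are controlled by Proposition \ref{Prop path BD entropy} in terms of $\sigma W$, $T$ and $\mathcal{E}(\rho_0^n,u_0^n)$, through \eqref{upper rho path} and \eqref{lower rho path}, which use \eqref{NV}. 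The high-order norms of $(\rho,u)$ are integrable on $[0,T]$ almost surely by Proposition \ref{Prop path HS est}, with a pathwise constant independent of $n$.

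\textbf{Step 1: uniform entropy bound.} We must bound $\sup_n\mathcal{E}(\rho_0^n,u_0^n)$ using only the convergence \eqref{conv initial data} in $H^1\times L^2$. Since $\rho_0^n\to\rho_0$ in $H^1\subset L^\infty$ and $\rho_0>0$ is continuous on $\mathbb{T}$, we get $\min\rho_0^n\ge\frac12\min\rho_0>0$ and $\|\rho_0^n\|_{L^\infty}\le C$ for $n$ large. Every term of \eqref{BD Def} is then uniformly bounded: $\frac12\rho u^2$ and $h(\rho)$ directly; $k(\rho)|\partial_x\rho|^2$, $\mu(\rho)\partial_x\rho\, u/\rho$ and $\mu^2|\partial_x\rho|^2/\rho^3$ because they are products of powers of $\rho$, which are bounded above and below, with $\|\partial_x\rho\|_{L^2}$ and $\|u\|_{L^2}$; and $I(\rho;\alpha;\varepsilon)$ likewise. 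This step is exactly where capillarity replaces the class $\mathcal{G}$ of Definition \ref{def G}: the $H^1$ convergence of the density already yields the entropy constraint. Consequently $\sup_{t\le T}\big(\|\rho^n\|_{L^\infty}+\|(\rho^n)^{-1}\|_{L^\infty}\big)\le C(\omega,T)$ uniformly in $n$, hence $\int_0^T g_n\,dt\le C(\omega,T)$ uniformly in $n$, almost surely.

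\textbf{Step 2: initial relative entropy.} By \eqref{upper H} and the uniform bounds above,
\begin{equation*}
H_r^n(0)\le C\big(\|u_0^n-u_0\|_{L^2}^2+\|\rho_0^n-\rho_0\|_{L^2}^2+\|A(\rho_0^n)-A(\rho_0)\|_{L^2}^2\big).
\end{equation*}
The last term tends to zero because $A(\rho)=\rho^{\frac{\beta-1}{2}}\partial_x\rho$. We split it as $(\rho_0^n)^{\frac{\beta-1}{2}}\partial_x(\rho_0^n-\rho_0)+\big((\rho_0^n)^{\frac{\beta-1}{2}}-\rho_0^{\frac{\beta-1}{2}}\big)\partial_x\rho_0$. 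The first piece is small by the $H^1$ convergence. The second is small by uniform convergence together with the bounds away from $0$ and $\infty$.

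\textbf{Step 3: conclusion.} Combining Steps 1 and 2 gives $\sup_{[0,T]}H_r^n\to0$ almost surely. The lower bound \eqref{lower H}, whose prefactor is bounded below uniformly in $n$ by Step 1, then yields $u^n\to u$ and $\rho^n\to\rho$ in $L^2$, and $A(\rho^n)\to A(\rho)$ in $L^2$, uniformly on $[0,T]$. To upgrade to $H^1$ convergence of the density, we write $\partial_x\rho^n=(\rho^n)^{\frac{1-\beta}{2}}A(\rho^n)$. Since $\rho^n$ is bounded in $H^1$ (via $\mathcal{E}$) and converges in $L^2$, it converges uniformly by interpolation. Together with the two-sided bounds, this gives $\partial_x\rho^n\to\partial_x\rho$ in $L^2$.

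The main obstacle is ensuring that $g_n$ contains no high-order norm of $(\rho^n,u^n)$, because such norms would require $H^2\times H^1$ control of the data, which we do not have. The delicate terms are $I_2$, $I_8$ and $I_9$ in \eqref{eq rel entropy visc with sign}. They involve $\partial_x\mu(\rho^n)$, $\mu_k''(\rho^n)\partial_x\rho^n$ and $\|\mu_k'(\rho^n)-\mu_k'(\rho)\|_{L^\infty}$, and we must make sure that the derivatives landing on $\rho^n$ are always expressed through $A(\rho^n)-A(\rho)$, via \eqref{generalized Bresch} and the identity $\mu_k''(\rho)\partial_x\rho=\frac{\beta+1}{2}A(\rho)$, so that they are absorbed into $H_r^n$. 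If the asymmetric roles of $\rho^n$ and $\rho$ in \eqref{eq rel entropy } prevent this for some term, the first fix to try is swapping the roles, that is, taking $(\rho^n,u^n)$ as the barred solution and using the pathwise bounds on the other factor.
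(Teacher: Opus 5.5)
Your proposal is correct and follows the paper's proof. The paper likewise takes the limit solution $(\rho,u)$ as the barred reference, gets a uniform bound on $\mathcal{E}(\rho_0^n,u_0^n)$ from $H^1$ convergence and $\rho_0>0$, shows $H_r^n(0)\to0$ via \eqref{upper H}, applies Gronwall with $\int_0^T g\,dt$ independent of $n$ (since only high-order norms of $(\rho,u)$ enter), and concludes with \eqref{lower H}. Your Step 3 passage from $A(\rho^n)\to A(\rho)$ to $\partial_x\rho^n\to\partial_x\rho$ in $L^2$ fills in a detail the paper leaves implicit.

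The one misstep is your fallback of swapping roles. It would put $\|\partial_{xx}A(\rho^n)\|_{L^2}$ and $\|(\rho^n)^{\frac{\alpha-1}{2}}\partial_{xx}u^n\|_{L^2}$ into $g$. Proposition \ref{Prop path HS est} controls these only through $\|\partial_x A(\rho_0^n)\|_{L^2}$ and $\|\partial_x u_0^n\|_{L^2}$, which are not uniformly bounded under $H^1\times L^2$ convergence. So the orientation you chose first is the only one that works.
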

\begin{proof}
With similar lines of argument with respect to the proof of the uniqueness result we derive 
\begin{equation}\label{gronw cont dep}
\begin{split}
&  \dfrac{d}{dt} H_r( \rho^n,u^n,A(\rho^n) | \rho,u,A(\rho))\le g(t) H_r( \rho^n,u^n,A(\rho^n) | \rho,u,A(\rho)).
\end{split}
\end{equation}
Furthermore since the sequence of initial datum $(\rho_0^n,u_0^n)$ is strongly convergent in $ H^1(\mathbb{T}) \times L^2(\mathbb{T})$ and the limit $\rho_0$ is strictly positive, then $\mathcal{E}(\rho_0^n,u_0^n)$ is uniformly bounded.
Then, by defining 
\begin{equation*}
R= \max \bigg\{ \sup_{n \in \mathbb{N}} \mathcal{E}(\rho_0^n,u_0^n), \mathcal{E}(\rho_0,u_0) \bigg\} < \infty,
\end{equation*}
we have that by virtue of Proposition \ref{Prop path BD entropy} and Proposition \ref{Prop path HS est} the following bounds
\begin{equation*}
\sup_{t \in [0,T]} \dfrac{ \| \rho^{-1} \|^2_{L^\infty}}{ \| {(\rho_n)}^{-1} \|^2_{L^\infty}} \le C(T, \sigma W, R),
\end{equation*}
\begin{equation*}
\sup_{t \in [0,T]} \int_{0}^{t} g(s)ds \le C(T, \sigma W, R,  \| \partial_{x} A(\rho_0) \|_{L^2},\| \partial_x u_0 \|_{L^2} ),
\end{equation*}
hold and are independent on $n.$ It is fundamental to observe that the above estimates do not depend on $\| \partial_{xx} \rho_0^n\|_{L^2}$ and $\| \partial_{x} u_0^n\|_{L^2},$ which are not uniformly bounded with respect to $n.$ Moreover, by using again \eqref{conv initial data} and the upper bound \eqref{upper H} we infer
\begin{equation*}
\lim_{n \rightarrow \infty} H_r( \rho_0^n,u_0^n,A^n_0 | \rho_0,u_0,A_0)=0,
\end{equation*}
which together with a standard Gronwall lemma applied to \eqref{gronw cont dep} leads to 
\begin{equation*}
\lim_{n \rightarrow \infty} H_r( \rho^n(t),u^n(t),A^n(t) | \rho(t),u(t),A(t))=0, \quad a.s. \; \text{for all} \; t \ge 0.
\end{equation*}
Finally, we use the lower bound \eqref{lower H} to conclude the proof.
\end{proof}
\section{invariant measures}\label{Sec5}
\noindent
This Section is devoted to the study of regularity properties of the Markov semigroup $\mathcal{P}_t$ and to the proof of our main result. In particular, we deduce the Feller property and we employ the Krylov-Bogoliubov method to infer the existence of invariant measures.  
\subsection{Regularity of the Markov semigroup}
We start our analysis by recalling that by virtue of Theorem \ref{Thm well posedness} the Markovian framework is well-defined provided that \eqref{SCC} and \eqref{NV} hold.  The aforementioned conditions are therefore assumed throughout this Section.
Furthermore we observe that by virtue of the regularity estimates of Section \ref{Sec3}
\begin{equation*}
 \mathcal{P}_t: M_b( \mathcal{X}_{1,0}) \longrightarrow M_b( \mathcal{X}_{1,0})
\end{equation*}
indeed we easily observe that if $\phi \in \mathcal{M}_b(\mathcal{X}_{1,0}),$ then boundedness and measurability of $\phi$ imply
\begin{equation*}
\mathbb{E} \phi(\rho(t; \rho_0), u(t;u_0)) \in \mathcal{M}_b(\mathcal{X}_{1,0}).
\end{equation*}
On the other hand,  by virtue of Proposition \ref{continuous dependence on the initial data}, the appropriate topology on the phase space $\mathcal{X}$ does not seem to be the natural one induced by the $H^2(\mathbb{T}) \times H^1(\mathbb{T})$ metric, therefore we focus our analysis within the class $C_b(\mathcal{X}_{1,0})$ and we infer the following result
\begin{lemma}\label{Prop Feller}
Let $\mathcal{P}_t $ be the Markov semigroup associated to system \eqref{main system}. Then the Feller property holds
\begin{equation}
\mathcal{P}_t \; : C_b(\mathcal{X}_{1,0}) \rightarrow C_b(\mathcal{X}_{1,0})
\end{equation}
\end{lemma}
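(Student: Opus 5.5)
The plan is to combine Proposition \ref{continuous dependence on the initial data} with the dominated convergence theorem. Fix $t\ge 0$ and $\phi\in C_b(\mathcal{X}_{1,0})$. The function $\mathcal{P}_t\phi$ is bounded by $\|\phi\|_{\infty}$, since $|\mathbb{E}\phi(\rho(t;\rho_0),u(t;u_0))|\le \sup|\phi|$. It is measurable because it lies in $\mathcal{M}_b(\mathcal{X}_{1,0})$, as already observed. So only continuity with respect to the $H^1\times L^2$ metric has to be checked.

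Since $\mathcal{X}_{1,0}$ is a metric space, sequential continuity is enough. Take $(\rho_0^n,u_0^n)\to(\rho_0,u_0)$ in $\mathcal{X}_{1,0}$. The data are deterministic, so the convergence \eqref{conv initial data} holds trivially almost surely.

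\begin{itemize}
\item First I check the hypotheses of Proposition \ref{continuous dependence on the initial data}. Because $\rho_0\in H^2\subset C(\mathbb{T})$ is strictly positive and $\rho_0^n\to\rho_0$ in $H^1\hookrightarrow L^\infty$, we get $\rho_0^n\ge \tfrac12\min\rho_0$ for $n$ large. Consequently $\mathcal{E}(\rho_0^n,u_0^n)$ is uniformly bounded, which is exactly the input the proof of that proposition uses.
\item Proposition \ref{continuous dependence on the initial data} then gives, for every fixed $t$, that $(\rho^n(t),u^n(t))\to(\rho(t),u(t))$ in $H^1\times L^2$ $\mathbb{P}$-a.s.
\item Each $(\rho^n(t),u^n(t))$ belongs to $\mathcal{X}$ a.s. by Theorem \ref{Thm well posedness}: mass is conserved and positivity holds. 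Continuity of $\phi$ on $\mathcal{X}_{1,0}$ therefore yields $\phi(\rho^n(t),u^n(t))\to\phi(\rho(t),u(t))$ almost surely.
\item The sequence is dominated by the constant $\sup|\phi|$, so dominated convergence gives $\mathcal{P}_t\phi(\rho_0^n,u_0^n)\to\mathcal{P}_t\phi(\rho_0,u_0)$.
\end{itemize}

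The main obstacle is the uniformity inside Proposition \ref{continuous dependence on the initial data}. The function $g(t)$ in \eqref{gronw cont dep} must be integrable with a bound that does not depend on $n$. This works only if every high-order norm in $g$ falls on the fixed limit solution $(\rho,u)$. The approximating solutions $(\rho^n,u^n)$ may enter only through entropy-controlled quantities: the bounds \eqref{upper rho path} and \eqref{lower rho path}, and $\mathcal{E}$. That is precisely how the relative entropy estimates of Proposition \ref{Prop uniqueness} are arranged, with $\bar\rho,\bar u$ playing the role of the limit.

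I would also need to check that the pathwise constants $C(\sigma W,\dots)$ are a.s. finite. That is enough here, because the convergence is pathwise and no moment bounds are required for dominated convergence.
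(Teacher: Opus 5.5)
Your proposal is correct and follows essentially the same route as the paper. You apply Proposition~\ref{continuous dependence on the initial data} to get $\mathbb{P}$-a.s.\ convergence of $(\rho(t;\rho_0^n),u(t;u_0^n))$ in $\mathcal{X}_{1,0}$, use continuity of $\phi$, and conclude by dominated convergence with the bound $\sup|\phi|$. Your extra checks are exactly the points the paper handles inside the proof of that proposition: uniform boundedness of $\mathcal{E}(\rho_0^n,u_0^n)$ via $H^1\hookrightarrow L^\infty$ and the strict positivity of $\rho_0$, and the fact that the Gronwall coefficient $g$ involves high-order norms only of the limit solution.
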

\begin{proof}
The proof of this result relies on the application of the continuous dependence on the initial data result obtained in Proposition \ref{continuous dependence on the initial data} and on the use of the uniform bound \eqref{entropy ineq Q/2}.
We start by considering a sequence of initial data $\{ (\rho^n_0, u^n_0) \}_{n \in \mathbb{N}} \subset \mathcal{X}_{1,0}$ satisfying 
\begin{equation*}
\lim_{n \rightarrow \infty} (\rho^n_0, u^n_0)=(\rho_0,u_0) \quad \text{in} \; \mathcal{X}_{1,0}
\end{equation*}
and by virtue of Proposition \ref{continuous dependence on the initial data} we have 
\begin{equation*}
\lim_{n \rightarrow \infty} ( \rho(t; \rho^n_0), u(t;u^n_0))= (\rho(t; \rho_0),u(t;u_0)) \quad \text{in} \; \mathcal{X}_{1,0}, \quad \mathbb{P}-a.s. \; \text{for all} \; t \ge 0.
\end{equation*}
Then for a generic $\phi \in C_b(\mathcal{X}_{1,0})$ we define 
\begin{equation}
M_{\phi}:= \sup_{(\rho,u) \in \mathcal{X}_{1,0}} | \phi(\rho,u) | < \infty.
\end{equation}
To infer the Feller property we observe that by sequential continuity, the condition $\mathcal{P}_t \phi \in C_b(\mathcal{X}_{1,0})$ is equivalent to require that 
\begin{equation}
\lim_{n \rightarrow \infty} \mathcal{P}_t \phi( \rho^n_0,u^n_0)= \mathcal{P}_t \phi (\rho_0,u_0). 
\end{equation}
Therefore we consider
\begin{equation}\label{invariance of G proof}
\begin{split}
\mathcal{P}_t \phi( \rho^n_0,u^n_0)- \mathcal{P}_t \phi (\rho_0,u_0) & = \mathbb{E} \big[ \phi(\rho(t; \rho^n_0),u(t;u^n_0))-\phi(\rho(t; \rho_0),u(t;u_0)) \big] 
\end{split}
\end{equation}
and since $\phi \in C_b(\mathcal{X}_{1,0}),$ then 
\begin{equation}
\lim_{n \rightarrow \infty}  \phi(\rho(t; \rho^n_0),u(t;u^n_0))-\phi(\rho(t; \rho_0),u(t;u_0)) = 0, \quad \mathbb{P}-a.s.
\end{equation}
therefore by dominated convergence theorem we have 
\begin{equation}\label{limit dom conv thm}
 \lim_{n \rightarrow \infty} \mathbb{E} \big[ \phi(\rho(t; \rho^n_0),u(t;u^n_0))-\phi(\rho(t; \rho_0),u(t;u_0)) \big]=0.
\end{equation}
that is our claim.
\end{proof}
\subsection{Krylov-Bogoliubov method}
We apply the Krylov-Bogoliubov method to deduce the existence of invariant measures for the Markov semigroup $\mathcal{P}_t.$ In particular, we introduce the class of time averaged measures and we prove the tightness on the phase space $\mathcal{X}$ endowed with the $H^1(\mathbb{T}) \times L^2(\mathbb{T})$ metric.
We start introducing the sequence of time-averaged measures 
\begin{equation}\label{time averaged meas}
\nu_T(B)= \dfrac{1}{T} \int_{0}^{T} \mathbb{P}\big((\rho(t;\rho_0),u(t;u_0)) \in B\big) dt,
\end{equation}
which is defined for any $T >0,$ $(\rho_0,u_0) \in \mathcal{X}$ and $B \in \mathcal{B}(\mathcal{X}_{1,0}).$ 
Then, we provide a collection of pointwise bounds which will be used in the proof of the stochastic compactness argument.
\begin{lemma}\label{lemma estimates tight}
Let $\rho \in H^2(\mathbb{T})$ be a positive scalar function satisfying 
$$ \int_{\mathbb{T}} \rho \,dx=1, \quad \text{and define} \quad M^2:= \begin{cases} \int_{\mathbb{T}} | \partial_{xx} \rho^{\frac{\alpha+\beta+1}{2}} |^2 dx \quad \; \, \text{if} \quad \alpha+\beta+1 < 0, \\ \int_{\mathbb{T}} | \partial_{xx} \log \rho|^2 dx \quad \quad \; \text{if} \quad \alpha+\beta+1=0.
\end{cases}
$$ Assume in addition that $\alpha+\beta+1 \le 0,$ then for any $x \in \mathbb{T} $ we have the following pointwise bounds
\begin{itemize}
\item If $\alpha+\beta+1=0$
\begin{equation}\label{bound tight 1}
e^{-c_pM} \le \rho(x) \le e^{c_p M},
\end{equation}
\begin{equation}\label{bound tight 2}
\int_{\mathbb{T}} | \partial_x \rho |^2 dx \le c_p M^2 e^{2c_p M},
\end{equation}
\item if $\alpha+\beta+1 <0$
\begin{equation}\label{bound tight 1 neg}
\big(1+c_p M \big)^{\frac{2}{\alpha+\beta+1}} \le \rho(x) \le \bigg(1+\bigg[ \int_{\mathbb{T}} | \partial_x \rho^{\frac{\gamma+\alpha-1}{2}}|^2 dx \bigg]^{\frac{1}{2}} \bigg)^{\frac{2}{\gamma+\alpha-1}},
\end{equation}
\begin{equation}\label{bound tight 2 neg}
\int_{\mathbb{T}} | \partial_x \rho |^2 dx \le \| \rho^{1-\alpha-\beta} \|_{L^\infty} c_p M^2.
\end{equation}
\end{itemize}
for a positive constant $c_p$ depending only on $\mathbb{T}.$ 
\end{lemma}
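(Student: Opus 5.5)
The plan is a one-dimensional Poincaré/Sobolev argument applied to $f:=\rho^{\frac{\alpha+\beta+1}{2}}$ (respectively $f:=\log\rho$ in the critical case), combined with the mass constraint. Since $\int_{\mathbb{T}}\rho\,dx=1$ and $\rho$ is continuous, there is $x_0\in\mathbb{T}$ with $\rho(x_0)=1$, hence $f(x_0)=1$ (resp. $f(x_0)=0$). Moreover, by periodicity $\int_{\mathbb{T}}\partial_x f\,dx=0$. So $\partial_x f$ vanishes somewhere and has zero mean, and Poincaré--Wirtinger together with the embedding $H^1\hookrightarrow L^\infty$ gives
\begin{equation*}
\|\partial_x f\|_{L^\infty}\le c_p\|\partial_{xx}f\|_{L^2}=c_pM .
\end{equation*}
Integrating from $x_0$ then gives $|f(x)-f(x_0)|\le c_pM$ for every $x\in\mathbb{T}$.

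\emph{Case $\alpha+\beta+1=0$.} Here $|\log\rho(x)|\le c_pM$, which is exactly \eqref{bound tight 1}. For \eqref{bound tight 2} I write $\partial_x\rho=\rho\,\partial_x\log\rho$. Then $\int_{\mathbb{T}}|\partial_x\rho|^2dx\le\|\rho\|_{L^\infty}^2\|\partial_x\log\rho\|_{L^2}^2\le e^{2c_pM}c_pM^2$, after renaming $c_p$.

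\emph{Case $\alpha+\beta+1<0$.} The bound $|f(x)-1|\le c_pM$ gives $f(x)\le 1+c_pM$. Since $f=\rho^{\frac{\alpha+\beta+1}{2}}$ has a negative exponent, raising to the power $\frac{2}{\alpha+\beta+1}<0$ reverses the inequality and yields the lower bound $\rho(x)\ge(1+c_pM)^{\frac{2}{\alpha+\beta+1}}$. For the upper bound I repeat the fundamental-theorem-of-calculus argument of \eqref{tfc beta} with $g=\rho^{\frac{\gamma+\alpha-1}{2}}$, noting $\gamma+\alpha-1\ge0$ since $\gamma\ge1$ and $\alpha\ge0$. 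This gives $g(x)\le 1+\int_{\mathbb{T}}|\partial_x g|\le 1+\|\partial_x g\|_{L^2}$ by Cauchy--Schwarz on the unit torus, and raising to the positive power $\frac{2}{\gamma+\alpha-1}$ gives the right-hand side of \eqref{bound tight 1 neg}. The degenerate case $\gamma+\alpha-1=0$ would be read through the convention $\partial_x\rho^\theta/\theta=\partial_x\log\rho$; I would treat it with $\log\rho$ if needed, or note that it is excluded by the exponent convention in the statement.

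For \eqref{bound tight 2 neg}, I set $s=\frac{\alpha+\beta+1}{2}$ and write
\begin{equation*}
\partial_x\rho=\frac{1}{s}\,\rho^{1-s}\,\partial_x f .
\end{equation*}
Squaring, $|\partial_x\rho|^2\le C\rho^{2-2s}|\partial_x f|^2=C\rho^{1-\alpha-\beta}|\partial_x f|^2$. Therefore $\int_{\mathbb{T}}|\partial_x\rho|^2dx\le C\|\rho^{1-\alpha-\beta}\|_{L^\infty}\|\partial_x f\|_{L^2}^2\le\|\rho^{1-\alpha-\beta}\|_{L^\infty}c_pM^2$, after absorbing the constants $1/s^2$ and the Poincaré constant (here I allow $c_p$ to also depend on $\alpha,\beta$).

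The only mildly delicate point is the Poincaré step, namely justifying $\|\partial_x f\|_{L^\infty}\lesssim\|\partial_{xx}f\|_{L^2}$. This needs $f\in H^2$, which holds because $\rho\in H^2$ is bounded above and away from zero, so $f$ is a smooth function of $\rho$. It also needs that $\partial_x f$ has zero mean, which is exactly where periodicity enters. Everything else is the monotonicity bookkeeping for the sign of the exponent.
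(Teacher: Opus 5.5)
Your proposal is correct and follows the paper's proof essentially step for step. You pick $x_0$ with $\rho(x_0)=1$ and integrate $\partial_x\rho^{\theta}$ (or $\partial_x\log\rho$) from $x_0$. You then control $\partial_x f$ by $\partial_{xx}f$ via Poincar\'e--Wirtinger, which applies because $\partial_x f$ has zero mean by periodicity; you pass through $L^\infty$, while the paper uses Cauchy--Schwarz and the $L^2$ Poincar\'e inequality, but this makes no difference. The upper bound comes from $\theta=\frac{\gamma+\alpha-1}{2}$, and the gradient bounds come from $\partial_x\rho=\rho\,\partial_x\log\rho$, respectively $\partial_x\rho=\frac1s\rho^{1-s}\partial_x f$, exactly as in the paper.

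Both of your caveats are well founded, and the paper passes over them as ``direct computations''. First, the constant in \eqref{bound tight 2 neg} cannot depend on $\mathbb{T}$ alone: it must blow up like $(\alpha+\beta+1)^{-2}$ as $\alpha+\beta+1\to0^-$. Second, the case $\gamma+\alpha-1=0$ has to be read through the $\log\rho$ convention.
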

\begin{proof}
First we observe that by the continuity of $\rho$ and the mass constraint, there exists a point $x_0 \in \mathbb{T}$ such that $\rho(x_0)=1.$ Then we divide the cases:
\begin{itemize}
\item Let $\alpha+\beta+1=0,$ then for any $x \in \mathbb{T}$ we have 
\begin{equation}
| \log \rho(x)| = \bigg| \int_{x_0}^{x} \partial_ y \log \rho \, dy \bigg|  \le \bigg[ \int_{\mathbb{T}} | \partial_x \log \rho|^2 \, dx  \bigg]^{\frac{1}{2}} \le c_p \bigg[ \int_{\mathbb{T}} | \partial_{xx} 	\log \rho |^2 dx \bigg]^{\frac{1}{2}}= c_p M
\end{equation}
from which \eqref{bound tight 1} follows. Similarly we deduce 
\begin{equation}
\int_{\mathbb{T}} | \partial_x \rho |^2 dx \le \| \rho \|^2_{L^\infty} \| \partial_x \log \rho \|^2_{L^2} \le c_p M^2 e^{2c_p M},
\end{equation}
which concludes the proof in the first case.
\item Let $\alpha+\beta+1<0,$ then for any $x \in \mathbb{T}$ and $\theta \neq 0$ we have 
\begin{equation}\label{rho theta tight}
\rho^\theta \le 1 + \bigg| \int_{x_0}^{x} \partial_y \rho^\theta dy \bigg| \le 1 + \bigg[ \int_{\mathbb{T}} | \partial_x \rho^\theta |^2 dx \bigg]^{\frac{1}{2}}
\end{equation}
therefore by choosing $\theta= \frac{\alpha+\beta+1}{2}$ in \eqref{rho theta tight} and Poincaré inequality we have the lower bound in \eqref{bound tight 1 neg}.
Similarly, the upper bound in \eqref{bound tight 1 neg} follows by choosing the non-negative constant $\theta= \frac{\gamma+\alpha-1}{2}$ in \eqref{rho theta tight}.
Finally \eqref{bound tight 2 neg} follows by direct computations.
\end{itemize}
\end{proof}
\noindent
Next we infer the following stochastic compactness result.
\begin{proposition}\label{Prop tight}
Let $\alpha$ and $\beta$ satisfy $\alpha +\beta+1 \le 0,$ then the sequence of probability measures $ \{ \nu_T \}_{T > 0}$ is tight on $\mathcal{X}_{1,0}.$
\end{proposition}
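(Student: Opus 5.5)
The plan is to build, for each $R>0$, a set $K_R \subset \mathcal{X}$ that is compact in $\mathcal{X}_{1,0}$, and then to show through Chebyshev's inequality that $\sup_{T\ge1}\nu_T(\mathcal{X}\setminus K_R)\to 0$ as $R\to\infty$. We average the entropy inequality \eqref{entropy ineq Q/2} of Proposition \ref{Prop entropy ineq Q/2} in time. Since $\mathcal{E}\ge 0$ (up to the harmless normalisation of $I$), this gives
\begin{equation*}
\frac{1}{T}\int_0^T \mathbb{E}\,\mathcal{D}[\rho,u](t)\,dt \le \frac{\mathcal{E}(\rho_0,u_0)}{T}+\frac12\|\sigma\|^2_{L^\infty},
\end{equation*}
which equals $\int \mathcal{D}\,d\nu_T$ and is uniform in $T\ge 1$. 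The quantities available uniformly are therefore $M^2=\|\partial_{xx}\rho^{\frac{\alpha+\beta+1}{2}}\|^2_{L^2}$ (read as $\|\partial_{xx}\log\rho\|_{L^2}^2$ when $\alpha+\beta+1=0$), the pressure term $\|\partial_x\rho^{\frac{\gamma+\alpha-1}{2}}\|^2_{L^2}$, $\|\rho^{\alpha/2}\partial_x u\|^2_{L^2}$, and $\varepsilon\|\sqrt\rho u\|^2_{L^2}$. This is exactly where the damping enters: without it we would have no averaged control of $u$ itself.

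We take
\begin{equation*}
K_R=\Big\{(\rho,u)\in\mathcal{X}:\ M^2+\|\partial_x\rho^{\frac{\gamma+\alpha-1}{2}}\|^2_{L^2}+\|\rho^{\alpha/2}\partial_xu\|^2_{L^2}+\varepsilon\|\sqrt\rho u\|^2_{L^2}\le R\Big\}.
\end{equation*}
Chebyshev's inequality gives $\nu_T(K_R^c)\le C/R$ uniformly in $T\ge1$, so it remains to prove that $K_R$ is relatively compact in $H^1\times L^2$ and that its closure stays inside $\mathcal{X}$.

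This step uses Lemma \ref{lemma estimates tight}, and we distinguish the cases $\alpha+\beta+1=0$ and $\alpha+\beta+1<0$. On $K_R$ the lemma yields $c(R)^{-1}\le\rho\le c(R)$. Here the sign condition \eqref{T} is essential: the lower bound $(1+c_pM)^{2/(\alpha+\beta+1)}$ is positive precisely because the exponent is negative. Given these two-sided bounds, the control of $\partial_{xx}\rho^{\theta}$ in $L^2$, together with $\partial_x\rho^\theta\in L^\infty$ obtained from Poincaré and Sobolev, translates into $\|\rho\|_{H^2}\le C(R)$, and we also get \eqref{bound tight 2}/\eqref{bound tight 2 neg}. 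Similarly, the density bounds turn $\|\rho^{\alpha/2}\partial_xu\|_{L^2}$ and $\|\sqrt\rho u\|_{L^2}$ into $\|u\|_{H^1}\le C(R)$. The compact embedding $H^2\times H^1\hookrightarrow H^1\times L^2$ then shows that $K_R$ is precompact in $H^1\times L^2$.

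I expect the main obstacle to be closedness inside the non-complete space $\mathcal{X}$. Take a sequence in $K_R$ converging in $H^1\times L^2$. The uniform $H^2\times H^1$ bound lets us pass to weak limits, so the limit $(\rho,u)$ lies in $H^2\times H^1$, and since $H^1\subset C(\mathbb{T})$ the mass constraint passes to the limit. The pointwise bound $\rho\ge c(R)^{-1}>0$ also passes to the limit through uniform convergence, so $\rho>0$ and the limit belongs to $\mathcal{X}$. Lower semicontinuity of the defining functionals makes $K_R$ closed; at this point one needs $\rho^\theta\to\bar\rho^\theta$ weakly in $H^2$, which the uniform positivity guarantees. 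Hence $K_R$ is compact in $\mathcal{X}_{1,0}$, and tightness follows.
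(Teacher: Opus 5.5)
Your proposal is correct and follows the paper's argument: Markov/Chebyshev applied to the time-averaged dissipation bound coming from \eqref{entropy ineq Q/2}, two-sided density bounds from Lemma \ref{lemma estimates tight} under $\alpha+\beta+1\le 0$, the damping to control $\|u\|_{L^2}$, and Rellich compactness, with the uniform lower bound on $\rho$ keeping limit points inside $\mathcal{X}$. The only difference is organisational: the paper places $\{\mathcal{D}\le R^2\}$ inside the auxiliary ball $C_{S(R)}$ of \eqref{def set C}, which is obviously closed in $\mathcal{X}_{1,0}$, whereas you prove directly, by lower semicontinuity, that your $K_R$ is closed (and you bypass the $L^4$ term by bounding $\|\partial_x\rho^\theta\|_{L^\infty}$ through $\|\partial_{xx}\rho^\theta\|_{L^2}$).
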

\begin{proof}
For any $R, \; S \ge 1$ we define the following sets
\begin{equation}
K_R= \bigg\{ (\rho,u) \in \mathcal{X} :  \mathcal{D}[\rho,u]  \le R^2  \bigg\},
\end{equation}
\begin{equation}\label{def set C}
C_S=\bigg\{ (\rho,u) \in \mathcal{X} :   \| u \|^2_{L^2} + \| \partial_x u \|^2_{L^2}+ \| \partial_x \rho \|^2_{L^2} + \| \partial_{xx} \rho \|^2_{L^2} + \| \rho \|_{L^\infty} + \| \rho^{-1} \|_{L^\infty}  \le S  \bigg\}.
\end{equation}
The proof of Proposition \ref{Prop tight} strongly relies on the use of Lemma \ref{lemma estimates tight} and Proposition \ref{Prop entropy ineq Q/2}. First,we infer that $C_S$ is compact in $\mathcal{X}_{1,0}.$ To this purpose, we observe that it is bounded in $H^2(\mathbb{T}) \times H^1(\mathbb{T}) $ and it is closed in $\mathcal{X}_{1,0}$, hence the claim follows by Rellich theorem.
Then we prove the existence of a constant $S(R)$ for which $K_R \subset C_{S(R)}.$ Given $(\rho,u) \in K_R,$ by virtue of Lemma \ref{lemma estimates tight} we easily deduce that up to a positive multiplicative constant:
\begin{itemize}
\item For $\alpha+\beta+1=0$ 
$$ \| \rho \|_{L^\infty} \le e^{c_p R}, \quad  \| \rho^{-1} \|_{L^\infty} \le e^{c_p R}, \quad \| \partial_x \rho \|^2_{L^2} \le c_p R^2 e^{2 c_p R},  $$ $$ \| u \|^2_{L^2} \le e^{c_p R} R^2, \quad \| \partial_x u \|^2_{L^2}\le R^2 e^{\frac{\alpha}{2} c_p R}$$
and 
\begin{equation*}
\begin{split}
 \int_{\mathbb{T}} | \partial_{xx} \rho |^2 dx & \le  \| \rho \|^2_{L^\infty} \| \partial_x \log \rho \|^4_{L^4}+ \| \rho \|^2_{L^\infty} \| \partial_{xx} \log \rho \|^2_{L^2} + \| \partial_{xx} \log \rho \|_{L^2} \| \partial_x \log \rho \|^2_{L^4} \\ & \le 2e^{2 c_p R} R^2+R^2.
\end{split}
\end{equation*}
Thus for $R \ge 1$ we choose 
\begin{equation*}
S(R):=4R^2 \max\{ 1,c_p \} \exp \big\{2c_p R \big\}
\end{equation*}
and we get the following inclusion $K_R \subset C_{S(R)}.$
\\
\item For $\theta= \alpha+\beta+1 < 0$ 
$$ \| \rho \|_{L^\infty} \le (1+R)^{\frac{2}{\gamma+\alpha-1}}, \quad  \| \rho^{-1} \|_{L^\infty} \le (1+c_p R)^{-{\frac{2}{\alpha+\beta+1}}}, \quad \| \partial_x \rho \|^2_{L^2} \le c(R) R^2,  $$ $$ \| u \|^2_{L^2} \le (1+c_pR)^{-\frac{2}{\alpha+\beta+1}}R^2, \quad \| \partial_x u \|^2_{L^2}\le  (1+c_pR)^{-\frac{2\alpha}{\alpha+\beta+1}}R^2$$
and similarly 
\begin{equation*}
\begin{split}
\| \partial_{xx} \rho \|^2_{L^2} \le \| \rho^{1-\alpha-\beta} \|_{L^\infty} \int_{\mathbb{T}} | \partial_{xx} \rho^{\frac{\alpha+\beta+1}{2}} |^2+ |\partial_x \rho^{\frac{\alpha+\beta+1}{4}} |^4 dx \le c(R) R^2,
\end{split}
\end{equation*}
for a positive constant $c(R)$ which by virtue of \eqref{bound tight 1 neg} satisfies $\| \rho^{1-\alpha-\beta} \|_{L^\infty} \le c(R).$
Thus also in this case there exists a constant $S(R)$ such that $K_R \subset C_{S(R)}.$
\end{itemize}
Therefore by using Markov inequality and \eqref{entropy ineq Q/2} we have
\begin{equation}\label{tight estimate}
\begin{split}
 \nu_T(C_{S(R)}) & \ge \nu_T (K_R)=1- \nu_T (\mathcal{X} \setminus K_R)= 1- \dfrac{1}{T} \int_{0}^{T} \mathbb{P} \big[ \mathcal{D}[\rho,u] > R^2 \big] dt \\ & \ge 1- \dfrac{1}{R^2 T} \int_{0}^{T} \mathbb{E} \big[ \mathcal{D}[\rho,u]  \big] dt \ge 1- \bigg[ \dfrac{\mathcal{E}(\rho_0.u_0)}{R^2 T}+ \dfrac{ \| \sigma \|^2_{L^\infty}}{R^2} \bigg].
\end{split}
\end{equation}
The tightness then follows by choosing $R$ sufficiently large.
\end{proof}
\begin{corollary}\label{coroll conv}
There exist a subsequence $T_j \rightarrow \infty$ and a probability measure $\nu$ such that 
\begin{equation}
\lim_{j \rightarrow \infty} \int_{\mathcal{X}} \phi \, \text{d} \nu_{T_j} = \int_{\mathcal{X}} \phi \,\text{d} \nu, \quad \forall \, \phi \in C_b(\mathcal{X}_{1,0}).
\end{equation}
\end{corollary}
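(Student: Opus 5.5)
The plan is to apply Prokhorov's theorem to the tight family $\{\nu_T\}_{T>0}$ of Proposition \ref{Prop tight}. The subtlety is that $\mathcal{X}_{1,0}$ is neither complete nor obviously Polish. However, the direction of Prokhorov's theorem we need, namely that tightness implies relative sequential compactness in the weak topology, holds on any metric space. Tightness means that probability measures are concentrated, up to $\eta$, on compact sets. Since compact sets in a metric space are separable, we may restrict attention to a $\sigma$-compact set, and on that set the classical argument applies.

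\textbf{Step 1: embedding.} Embed $\mathcal{X}_{1,0}$ isometrically into the complete space $Y = H^1(\mathbb{T}) \times L^2(\mathbb{T})$. Regard each $\nu_T$ as a Borel probability measure on $Y$ supported in $\mathcal{X}$. By the remark preceding the definition of $\mathcal{P}_t$, the Borel $\sigma$-algebras coincide, and Borel subsets of $\mathcal{X}_{1,0}$ are traces of Borel subsets of $Y$.

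\textbf{Step 2: compactness in $Y$.} The compact sets $C_{S(R)}$ of Proposition \ref{Prop tight} are compact subsets of $\mathcal{X}_{1,0}$, hence also compact in $Y$. By \eqref{tight estimate},
\begin{equation*}
\nu_T\big(C_{S(R)}\big) \ge 1 - \frac{\mathcal{E}(\rho_0,u_0)}{R^2 T} - \frac{\|\sigma\|^2_{L^\infty}}{R^2}.
\end{equation*}
For $T \ge 1$ the right-hand side is uniform in $T$, so the family is tight on the Polish space $Y$. Prokhorov's theorem then gives $T_j \to \infty$ and a probability measure $\nu$ on $Y$ with $\nu_{T_j} \rightharpoonup \nu$ weakly on $Y$.

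\textbf{Step 3: the limit is carried by $\mathcal{X}$.} This is the main obstacle, because $\mathcal{X}$ is not closed in $Y$: mass could escape to the vacuum boundary $\{\rho \ge 0,\ \rho \not> 0\}$. The sets $C_{S(R)}$ are compact and therefore closed in $Y$. By the Portmanteau theorem,
\begin{equation*}
\nu\big(C_{S(R)}\big) \ge \limsup_{j\to\infty} \nu_{T_j}\big(C_{S(R)}\big) \ge 1 - \frac{\|\sigma\|^2_{L^\infty}}{R^2}.
\end{equation*}
Letting $R \to \infty$ yields $\nu\big(\bigcup_R C_{S(R)}\big) = 1$. Since $\bigcup_R C_{S(R)} \subset \mathcal{X}$, the measure $\nu$ restricts to a probability measure in $\mathfrak{B}(\mathcal{X}_{1,0})$. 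This step relies on the constraint $\|\rho^{-1}\|_{L^\infty} \le S$ built into $C_S$, which comes from \eqref{T} through Lemma \ref{lemma estimates tight}.

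\textbf{Step 4: convergence against test functions on $\mathcal{X}_{1,0}$.} Take $\phi \in C_b(\mathcal{X}_{1,0})$; such $\phi$ need not extend continuously to $Y$, so we argue directly. Fix $\eta > 0$ and choose $R$ with $\nu_{T_j}(C_{S(R)}) \ge 1-\eta$ and $\nu(C_{S(R)}) \ge 1-\eta$. The restriction $\phi|_{C_{S(R)}}$ is continuous on a compact metric subset of $Y$. By Tietze's extension theorem it extends to some $\tilde\phi \in C_b(Y)$ with $\|\tilde\phi\|_\infty \le \|\phi\|_\infty$. Then
\begin{equation*}
\Big|\int \phi \, d\nu_{T_j} - \int \phi \, d\nu\Big| \le \Big|\int \tilde\phi \, d\nu_{T_j} - \int \tilde\phi \, d\nu\Big| + 4\|\phi\|_\infty \eta.
\end{equation*}
The first term on the right tends to zero by weak convergence on $Y$. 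Since $\eta$ is arbitrary, the claimed convergence follows for all $\phi \in C_b(\mathcal{X}_{1,0})$.
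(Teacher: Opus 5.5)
Your proof is correct. Its core coincides with the paper's: the uniform bound \eqref{tight estimate}, Prokhorov's theorem, and the Portmanteau inequality on the closed sets $C_{S(R)}$, followed by $R\to\infty$.

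The difference is in how the non-completeness of $\mathcal{X}_{1,0}$ is handled.

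The paper applies Prokhorov directly on $\mathcal{X}_{1,0}$. It uses that the direct half of the theorem holds on an arbitrary metric space (Theorem 6.1 in Billingsley). So the limit is a measure on $\mathcal{X}_{1,0}$ from the outset, and \eqref{prob meas} records that no mass is lost.

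You use only the classical Polish version, on $H^1(\mathbb{T})\times L^2(\mathbb{T})$. The price is that the limit a priori lives on this larger space. Your Step 3, concentration on $\bigcup_R C_{S(R)}\subset\mathcal{X}$, therefore becomes genuinely necessary. Step 4 is then needed to pass from test functions in $C_b(H^1\times L^2)$ to $C_b(\mathcal{X}_{1,0})$. Your Tietze argument on the compacts $C_{S(R)}$ does this correctly. The Portmanteau theorem applied to open sets $G\cap\mathcal{X}$ would also work, since all measures involved are carried by $\mathcal{X}$.

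What your route buys is self-containedness and transparency. It makes visible that the bounds built into $C_S$ are what matter. The $H^2\times H^1$ control, together with $\|\rho^{-1}\|_{L^\infty}\le S$ (the latter coming from \eqref{T}), makes $C_{S(R)}$ closed in $H^1\times L^2$ and contained in $\mathcal{X}$. Hence no mass escapes to $\overline{\mathcal{X}}\setminus\mathcal{X}$, which contains vacuum states as well as less regular ones, not only the vacuum boundary you mention.

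The paper's route is shorter but relies on the less commonly quoted metric-space form of Prokhorov's theorem.
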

\begin{proof}
By using Prokhorov's theorem there exist a subsequence $T_j$ and a measure $\nu$ such that 
\begin{equation}\label{weak conv meas}
\nu_{T_j} \rightharpoonup \nu \quad in \quad \mathfrak{B}(\mathcal{X}_{1,0}).
\end{equation}
To be precise, to prove that $\nu$ is a probability measure we observe that since $C_{S(R)}$ is closed in $\mathcal{X}_{1,0},$  then by using Portmanteau's theorem we have
\begin{equation}\label{prob meas}
\nu(\mathcal{X}) \ge \nu (C_{S(R)}) \ge \limsup_{T \rightarrow \infty} \nu_T(C_{S(R)}) \ge 1-\dfrac{ \| \sigma \|^2_{L^\infty}}{R^2}.
\end{equation}
Note that to infer \eqref{weak conv meas} and \eqref{prob meas}, the completeness of the phase space is not required, see Theorem 6.1 and Theorem 2.1 in \cite{Billingsley}. Then we easily deduce that $\nu \in \mathfrak{B}(\mathcal{X}_{1,0})$ by sending $R \rightarrow \infty.$ in \eqref{prob meas}.
\end{proof}
\noindent
After having constructed the limit measure, the rest of the paper is devoted to the proof of its invariance under the action of the Markov semigroup $\mathcal{P}_t.$ 
\subsection{Proof of the main result}
Finally we are ready to prove the main result of this paper, Theorem \ref{main theorem}. Assume that conditions \eqref{SCC}, \eqref{NV} and \eqref{T} are satisfied. Then given $(\rho_0,u_0) \in \mathcal{X}$ and the sequence $\{\nu_T\}_{T}$ defined as in \eqref{time averaged meas},  by Corollary \ref{coroll conv} we have that there exists a limit measure $\nu \in \mathfrak{B}(\mathcal{X}_{1,0}).$ 
Now, let $\phi \in C_b(\mathcal{X}_{1,0}),$ by virtue of Lemma \ref{Prop  Feller} we have that $\mathcal{P}_t \phi \in C_b(\mathcal{X}_{1,0}),$ for all $t \ge 0.$ Furthermore by using again Corollary \ref{coroll conv} we deduce
\begin{equation}
\lim_{j \rightarrow \infty} \int_{\mathcal{X}} \mathcal{P}_t \phi \, d \nu_{T_j}= \int_{\mathcal{X}} \mathcal{P}_t \phi \, d \nu, \quad \forall t \ge 0,
\end{equation}
from which with direct computation we have 
\begin{equation*}
\begin{split}
\int_{\mathcal{X}} \mathcal{P}_t \phi \, d \nu &= \lim_{j \rightarrow \infty} \int_{\mathcal{X}} \mathcal{P}_t \phi \, d \nu_{T_j}= \lim_{j \rightarrow \infty} \dfrac{1}{T_j} \int_{0}^{T_j} \mathcal{P}_{t+s} \phi (\rho_0,u_0) ds = \lim_{j \rightarrow \infty} \dfrac{1}{T_j} \int_{t}^{T_j+t} \mathcal{P}_s \phi (\rho_0,u_0) ds \\ & =\lim_{j \rightarrow \infty} \bigg( \dfrac{1}{T_j} \int_{0}^{T_j} \mathcal{P}_s \phi (\rho_0,u_0) ds+\dfrac{1}{T_j} \int_{T_j}^{T_j+t} \mathcal{P}_s \phi (\rho_0,u_0) ds-\dfrac{1}{T_j} \int_{0}^{t} \mathcal{P}_s \phi(\rho_0,u_0) ds \bigg) \\ & = \lim_{j \rightarrow \infty} \int_{\mathcal{X}} \phi \, d \nu_{T_j}= \int_{\mathcal{X}} \phi \,d \nu,
\end{split}
\end{equation*}
which is exactly \eqref{def inv meas}.  To conclude, we observe that 
inequality \eqref{ineq inv meas} follows by considering a statistically stationary solutions $(\rho, u) \in \mathcal{X}$ having $\nu$ as its law and by using \eqref{entropy ineq Q/2}. The proof is therefore concluded.
\\
\\
\textbf{Declarations}
\\
\\
\textbf{Data Availability.} The authors declare that data sharing is not applicable to this article as no datasets were generated or analysed.
\\
\\
\textbf{Conflict of interest.} The authors declare that they have no conflict of interest.
\appendix
\section{Relative entropy for viscous Korteweg fluids}\label{Appendix}
\noindent
This Appendix is devoted to the proof of the relative entropy identity \eqref{eq rel entropy }. First we introduce the quantity $\mu'_k(\rho):=\sqrt{\rho k(\rho)}$ so that system \eqref{main system} can be rewritten as 
\begin{equation*}
\begin{cases}
\partial_t \rho + \partial_x (\rho u)=0, \\
\text{d}(\rho u) + [\partial_x (\rho u^2)+ \partial_x p(\rho)] \text{d}t= [\partial_x (\mu(\rho) \partial_x u) +\partial_x ( \rho \mu'_k(\rho) \partial_x A(\rho))- \varepsilon \rho u]\text{d}t+ \rho \sigma(x) \text{d}W\\
\partial_t (\rho A(\rho))+ \partial_x (\rho u A(\rho))=- \partial_x (\rho \mu'_k(\rho)\partial_x u),
\end{cases}
\end{equation*}
which is usually also referred as augmented system.
Next we start computing the derivative of each contribution of the relative entropy 
\begin{equation*}
H_r( \rho, u, A| \bar{\rho}, \bar{u}, \bar{A})= \int_{\mathbb{T}} \dfrac{1}{2} \rho(u-\bar{u})^2 + h(\rho| \bar{\rho})+ \dfrac{1}{2} \rho(A-\bar{A})^2 dx.
\end{equation*}
To compute the derivative of the relative kinetic energy, we preliminary observe that 
\begin{equation*}
\begin{split}
& \partial_t ( u-\bar{u}) +u \partial_x (u-\bar{u})+ \partial_x \bar{u}(u-\bar{u}) =  \bigg( \dfrac{\partial_x (\mu(\rho) \partial_x u)}{\rho}-\dfrac{\partial_x (\mu(\bar{\rho} \partial_x \bar{u})}{\bar{\rho}} \bigg) \\ &  +\bigg(-\frac{\partial_x p}{\rho}+ \frac{\partial_x \bar{p}}{\bar{\rho}}+ \frac{\partial_x K}{\rho}-\frac{\partial_x \bar{K}}{\bar{\rho}} \bigg)- \varepsilon (u-\bar{u}).
\end{split}
\end{equation*}
Thus the contributions due to the stochastic force cancel each other since they are linear in the equation for the velocity field.
Hence by rewriting the Korteweg tensor $\mathcal{K}$ in terms of the new quantity $\mu'_k(\rho),$ we easily deduce the following identity for the relative kinetic energy density 
\begin{equation}\label{rel kin energy eq}
\begin{split}
& \dfrac{d}{dt} \int_{\mathbb{T}} \dfrac{1}{2} \rho (u-\bar{u})^2 + \varepsilon \int_{\mathbb{T}}\rho (u-\bar{u})^2 dx = \int_{\mathbb{T}} \rho(u-\bar{u})\bigg( \dfrac{\partial_x (\mu(\rho) \partial_x u)}{\rho}-\dfrac{\partial_x (\mu(\bar{\rho} \partial_x \bar{u})}{\bar{\rho}} \bigg)dx  \\ &  -\int_{\mathbb{T}}\rho \partial_x \bar{u} (u-\bar{u})^2 dx +\int_{\mathbb{T}}\rho(u-\bar{u})\bigg(-\frac{\partial_x p(\rho)}{\rho}+ \frac{\partial_x p(\bar{\rho})}{\bar{\rho}} \bigg) dx \\ & +\int_{\mathbb{T}}\rho(u-\bar{u})\bigg(\frac{\partial_x (\rho \mu'_k(\rho)\partial_x A)}{\rho}-\frac{\partial_x (\bar{\rho} \mu'_k(\bar{\rho}) \partial_x \bar{A})}{\rho} \bigg) dx.
\end{split}
\end{equation}
Concerning the pressure term,  with direct computation we have 
\begin{equation}\label{rel p}
\dfrac{d}{dt} \int_{\mathbb{T}} h(\rho| \bar{\rho}) dx = \int_{\mathbb{T}} \rho(u-\bar{u}) \partial_x ( h'(\rho)-h'(\bar{\rho}) )dx- \int_{\mathbb{T}} p(\rho| \bar{\rho}) \partial_x \bar{u}dx,
\end{equation}
and we have that $p(\rho| \bar{\rho})$ satisfies $(\gamma-1)h(\rho| \bar{\rho})=p(\rho| \bar{\rho}).$
Furthermore, we observe that since $\rho h^{''}(\rho)=p'(\rho)$ then the following cancellation occurs
\begin{equation}\label{cancellation p}
\int_{\mathbb{T}} \rho(u-\bar{u}) \partial_x ( h'(\rho)-h'(\bar{\rho}) )dx+\int_{\mathbb{T}} \rho(u-\bar{u})\bigg(-\frac{\partial_x p(\rho)}{\rho}+ \frac{\partial_x p(\bar{\rho})}{\bar{\rho}} \bigg) dx=0.
\end{equation} 
Finally, we focus on the contribution due to the capillarity part.  The difference $(A-\bar{A})$ satisfies
\begin{equation*}
\begin{split}
& \partial_t (A-\bar{A}) +u \partial_x (A-\bar{A})+ \partial_x \bar{A}(u-\bar{u}) =  -\bigg( \dfrac{\partial_x (\rho \mu'_k(\rho) \partial_x u)}{\rho}-\dfrac{\partial_x (\bar{\rho}\mu'_k(\bar{\rho}) \partial_x \bar{u})}{\bar{\rho}} \bigg),
\end{split}
\end{equation*}
thus we have 
\begin{equation}\label{rel kort}
\begin{split}
\dfrac{d}{dt}\int_{\mathbb{T}} \dfrac{1}{2} \rho (A-\bar{A})^2  = & - \int_{\mathbb{T}}\rho \partial_x \bar{A} (u-\bar{u})(A- \bar{A})dx \\ & - \int_{\mathbb{T}}\rho(A-\bar{A})\bigg( \frac{\partial_x (\rho \mu'_k(\rho) \partial_x u)}{\rho} - \frac{\partial_x (\bar{\rho} \mu'_k(\bar{\rho}) \partial_x \bar{u})}{\bar{\rho}}\bigg) dx.
\end{split}
\end{equation}
Summing up \eqref{rel kin energy eq}-\eqref{rel p}-\eqref{rel kort} and recalling the cancellation \eqref{cancellation p} we deduce the following equality 
\begin{equation}\label{dt Hr not final}
\begin{split}
& \dfrac{d}{dt} H_r( \rho, u A| \bar{\rho}, \bar{u}, \bar{A})+ \varepsilon \int_{\mathbb{T}}\rho (u-\bar{u})^2 dx= \int_{\mathbb{T}} \rho(u-\bar{u})\bigg( \dfrac{\partial_x (\mu(\rho) \partial_x u)}{\rho}-\dfrac{\partial_x (\mu(\bar{\rho} \partial_x \bar{u})}{\bar{\rho}} \bigg)dx  \\ &  -\int_{\mathbb{T}}\rho \partial_x \bar{u} (u-\bar{u})^2 dx +\int_{\mathbb{T}}\rho(u-\bar{u})\bigg(\frac{\partial_x (\rho \mu'_k(\rho)\partial_x A)}{\rho}-\frac{\partial_x (\bar{\rho} \mu'_k(\bar{\rho}) \partial_x \bar{A})}{\bar{\rho}} \bigg) dx - \int_{\mathbb{T}} p(\rho| \bar{\rho}) \partial_x \bar{u}dx\\ & - \int_{\mathbb{T}}\rho \partial_x \bar{A} (u-\bar{u})(A- \bar{A})dx  - \int_{\mathbb{T}}\rho(A-\bar{A})\bigg( \frac{\partial_x (\rho \mu'_k(\rho) \partial_x u)}{\rho} - \frac{\partial_x (\bar{\rho} \mu'_k(\bar{\rho}) \partial_x \bar{u})}{\bar{\rho}}\bigg) dx.
\end{split}
\end{equation}
To conclude we observe that since $\partial_x \mu'_k(\rho)= \big( \frac{\beta+1}{2} \big)A(\rho),$ then after a tedious but straightforward computation we have 
\begin{align*}
& \int_{\mathbb{T}}\rho(u-\bar{u})\bigg(\frac{\partial_x (\rho \mu'_k(\rho)\partial_x A)}{\rho}-\frac{\partial_x (\bar{\rho} \mu'_k(\bar{\rho}) \partial_x \bar{A})}{\bar{\rho}} \bigg) -\rho(A-\bar{A})\bigg( \frac{\partial_x (\rho \mu'_k(\rho) \partial_x u)}{\rho} - \frac{\partial_x (\bar{\rho} \mu'_k(\bar{\rho}) \partial_x \bar{u})}{\bar{\rho}}\bigg) \\ & = \int_{\mathbb{T}} (u-\bar{u}) \bigg( \partial_x \rho \mu^{'}_k(\rho) \partial_x A+ \rho \mu^{''}_k(\rho) \partial_x \rho \partial_x A \bigg)- \dfrac{\rho}{\bar{\rho}} (u-\bar{u}) \bigg( \partial_x \bar{\rho} \mu^{'}_k(\bar{\rho}) \partial_x \bar{A}+ \bar{\rho} \mu^{''}_k(\bar{\rho}) \partial_x \bar{\rho} \partial_x \bar{A} \bigg) \\ & 
-\int_{\mathbb{T}} (A-\bar{A}) \bigg(\partial_x \rho \mu'_k(\rho) \partial_x u + \rho \mu^{''}_k(\rho) \partial_x \rho \partial_x u \bigg) - \dfrac{\rho}{\bar{\rho}} (A-\bar{A}) \bigg(\partial_x \bar{\rho} \mu'_k(\bar{\rho}) \partial_x \bar{u} + \bar{\rho} \mu^{''}_k(\bar{\rho}) \partial_x \bar{\rho} \partial_x \bar{u} \bigg)  \\ & =- \int_{\mathbb{T}} \rho \partial_x \bar{u}(A-\bar{A})^2+ \rho \partial_x \bar{A}(A-\bar{A})(u-\bar{u}) 
-\rho( {\mu_k}^{''}(\rho) \partial_x \rho -\mu^{''}_k(\bar{\rho}) \partial_x \bar{\rho})  \big[ (A-\bar{A}) \partial_x \bar{u} - (u-\bar{u}) \partial_x \bar{A} \big] \\ & - \int_{\mathbb{T}} \rho( {\mu_k}^{'}(\rho)-\mu^{'}_k(\bar{\rho}))  \big[ (A-\bar{A}) \partial_{xx} \bar{u} - (u-\bar{u}) \partial_{xx} \bar{A} \big],
\end{align*}
from which by using \eqref{dt Hr not final} we recover \eqref{eq rel entropy }.
\section{Exponential moment bounds}\label{Appendix B}
\noindent
We conclude with the derivation of an exponential martingale inequality which provides an exponential moment bound.
Although these results are not necessary for our analysis, they are novel and of independent interest. Therefore we provide here a derivation of these inequalities which we aim to address in future research.
\\
Before stating the result, we clarify the following notation. We decompose the entropy dissipation functional \eqref{entropy dissipation} as 
\begin{equation*}
\mathcal{D}_R[\rho,u]= D[\rho,u]-\mathcal{D}^{(i)}_Z[\rho,u],  \quad i=1,2,
\end{equation*}
with 
\begin{equation*}
\mathcal{D}^{(1)}_Z[\rho,u],= \varepsilon \| \sqrt{\rho} u \|^2_{L^2} + \frac{2\gamma a^2}{(\gamma+ \alpha-1)^2}\| \partial_x \rho^{\frac{\gamma+\alpha-1}{2}} \|^2_{L^2},
\end{equation*}
and 
\begin{equation*}
\mathcal{D}^{(2)}_Z[\rho,u],= \varepsilon \| \sqrt{\rho} u \|^2_{L^2} + \frac{2}{(\alpha+\beta+1)^2}\| \partial_{xx} \rho^{\frac{\alpha+ \beta+1}{2}} \|^2_{L^2}.
\end{equation*}
\begin{proposition}\label{Prop esp mart}
Let $(\rho,u)$ be a solution of \eqref{main system} with initial condition $(\rho_0,u_0) \in \mathcal{X}.$ Assume that \eqref{SCC} is satisfied and that one of the following holds
\begin{itemize}
\item [(1)] 
$\alpha=\gamma-1, $
\item  [(2)]
$\alpha=\beta+1.$
\end{itemize}
Then the following exponential martingale inequality holds
\begin{equation}\label{esp mart ineq}
\begin{split}
& \mathbb{P} \bigg[ \sup_{t \ge 0}\bigg(  \mathcal{E}(\rho,u)(t) + \int_{0}^{t} \frac{1}{2} \mathcal{D}^{(i)}_Z[\rho,u] + \mathcal{D}_R[\rho,u]ds-\dfrac{1}{2} \| \sigma \|^2_{L^\infty}t \bigg) - \mathcal{E}(\rho_0,u_0) \ge R \bigg] \le e^{-\gamma^{(i)}_0 R},
\end{split}
\end{equation}
holds for every $R >0$ and 
\begin{equation*}
\gamma^{(1)}_0= \dfrac{\min\big( \varepsilon, \frac{2\gamma a^2}{(\gamma+ \alpha-1)^2} \big)}{4 \| \sigma \|^2_{L^\infty}}, \quad \gamma^{(2)}_0= \dfrac{\min\big( \varepsilon, \frac{2}{(\alpha+\beta+1)^2} \big)}{4 c_p \| \sigma \|^2_{L^\infty}},
\end{equation*}
with $c_p$ being a positive constant defined by \eqref{poincarè}.
\end{proposition}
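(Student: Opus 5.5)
We follow the standard exponential martingale argument. The starting point is the pathwise Itô identity for the entropy, obtained by combining the energy equality \eqref{energy equality} with the analogous identity from \eqref{bd diff ineq}. Concretely, before taking expectations in the proof of Proposition \ref{Prop entropy ineq Q/2}, we retain the identity
\begin{equation*}
\mathcal{E}(\rho,u)(t)+\int_0^t \mathcal{D}[\rho,u]\,ds \le \mathcal{E}(\rho_0,u_0)+\frac12\|\sigma\|^2_{L^\infty}t + M_t ,
\end{equation*}
where the martingale is
\begin{equation*}
M_t=\sum_l\int_0^t\int_{\mathbb{T}}\rho\big(u+\tfrac12 Q\big)\sigma_l\,dx\,dW^l .
\end{equation*}
The weights $1$ and $\tfrac12$ come from \eqref{entropy with Q}, i.e. $\tfrac12\rho u+\tfrac12\rho V$. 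Writing $\mathcal{D}=\mathcal{D}_R+\mathcal{D}^{(i)}_Z$ and moving $\tfrac12\mathcal{D}^{(i)}_Z$ to the right, we obtain
\begin{equation*}
\mathcal{E}(t)+\int_0^t\big(\tfrac12\mathcal{D}^{(i)}_Z+\mathcal{D}_R\big)\,ds-\tfrac12\|\sigma\|_{L^\infty}^2t-\mathcal{E}(\rho_0,u_0)\le M_t-\tfrac12\int_0^t\mathcal{D}^{(i)}_Z\,ds .
\end{equation*}

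The key step is to bound the quadratic variation
\begin{equation*}
\langle M\rangle_t=\int_0^t\sum_l\Big(\int_{\mathbb{T}}\rho(u+\tfrac12 Q)\sigma_l\,dx\Big)^2ds\le \|\sigma\|^2_{L^\infty}\int_0^t\Big(\int_{\mathbb{T}}\rho|u+\tfrac12 Q|\,dx\Big)^2ds
\end{equation*}
by a multiple of $\int_0^t\mathcal{D}^{(i)}_Z\,ds$. By Cauchy–Schwarz and the mass constraint, $(\int\rho|u|)^2\le\|\sqrt\rho u\|_{L^2}^2$, which is controlled by the damping term $\varepsilon\|\sqrt\rho u\|^2_{L^2}$. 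For the $Q$ part we have $\int\rho|Q|\,dx=\int\rho^{\alpha-1}|\partial_x\rho|\,dx$, and here the hypotheses enter. In case (1), $\alpha=\gamma-1$ gives $\rho^{\alpha-1}\partial_x\rho=c\,\rho^{\frac{\gamma+\alpha-1}{2}-1}\cdots$. More carefully, we use $\rho Q=\sqrt\rho\,\cdot\,\sqrt\rho Q$ with $\sqrt\rho Q\propto\partial_x\rho^{\alpha-\frac12}$, which matches $\partial_x\rho^{\frac{\gamma+\alpha-1}{2}}$ exactly when $\alpha-\frac12=\frac{\gamma+\alpha-1}{2}$, i.e. $\alpha=\gamma-1$. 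Hence $(\int\rho|Q|)^2\le C\|\partial_x\rho^{\frac{\gamma+\alpha-1}{2}}\|^2_{L^2}$. In case (2), $\alpha=\beta+1$ gives $\alpha-\frac12=\frac{\alpha+\beta+1}{2}$, and the Poincaré inequality \eqref{poincarè} on mean-zero derivatives yields $\|\partial_x\rho^{\frac{\alpha+\beta+1}{2}}\|^2_{L^2}\le c_p\|\partial_{xx}\rho^{\frac{\alpha+\beta+1}{2}}\|^2_{L^2}$. In both cases, keeping track of the constants gives
\begin{equation*}
\langle M\rangle_t\le \frac{1}{\gamma_0^{(i)}}\cdot\frac12\int_0^t\mathcal{D}^{(i)}_Z\,ds ,
\end{equation*}
up to the exact normalization, which fixes $\gamma^{(i)}_0$ as stated. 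This matching of constants (in particular whether the $\tfrac12$ weights and the factor $4$ line up) is the step I expect to require the most care.

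We then conclude with the exponential martingale inequality
\begin{equation*}
\mathbb{P}\Big(\sup_{t\ge0}\big(M_t-\tfrac{\gamma}{2}\langle M\rangle_t\big)\ge R\Big)\le e^{-\gamma R},
\end{equation*}
which follows from Doob's maximal inequality applied to the exponential supermartingale $\exp(\gamma M_t-\tfrac{\gamma^2}{2}\langle M\rangle_t)$. Taking $\gamma=\gamma^{(i)}_0$, we get $M_t-\tfrac12\int_0^t\mathcal{D}^{(i)}_Z\le M_t-\tfrac{\gamma_0}{2}\langle M\rangle_t$, and \eqref{esp mart ineq} follows. The identity needs to hold for strong solutions, which is justified by the regularity of Theorem \ref{Thm well posedness}, possibly via a stopping-time localization.
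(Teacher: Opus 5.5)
Your architecture is the same as the paper's: the martingale $M_t=\sum_l\int_0^t\int_{\mathbb{T}}\rho(u+\frac12 Q)\sigma_l\,dx\,dW^l$ is exactly the paper's $Z(t)$, since $\frac12\rho Q=\mu(\rho)\frac{\partial_x\rho}{2\rho}$. You also use the same splitting $\mathcal{D}=\mathcal{D}_R+\mathcal{D}^{(i)}_Z$ and the same inequality $\mathbb{P}[\sup_{t\ge0}(Z(t)-\frac{\lambda}{2}\langle Z\rangle(t))\ge R]\le e^{-\lambda R}$.

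The gap is in the one step that uses the hypotheses. You write $\rho Q=\sqrt\rho\cdot\sqrt\rho Q$ and apply the $\rho$-weighted Cauchy--Schwarz inequality. This leaves you with $\|\sqrt\rho Q\|^2_{L^2}=(\alpha-\frac12)^{-2}\|\partial_x\rho^{\alpha-\frac12}\|^2_{L^2}$, and your exponent matching is wrong. The identity $\alpha-\frac12=\frac{\gamma+\alpha-1}{2}$ holds iff $\alpha=\gamma$, not $\alpha=\gamma-1$. Likewise $\alpha-\frac12=\frac{\alpha+\beta+1}{2}$ holds iff $\alpha=\beta+2$, not $\alpha=\beta+1$.

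Under hypotheses (1) and (2) one has instead $\frac{\gamma+\alpha-1}{2}=\alpha$, resp.\ $\frac{\alpha+\beta+1}{2}=\alpha$. So $\mathcal{D}^{(i)}_Z$ controls, directly or via Poincar\'e, the quantity $\int_{\mathbb{T}}\rho^{2\alpha-2}|\partial_x\rho|^2\,dx$. Your bound instead produces $\int_{\mathbb{T}}\rho^{2\alpha-3}|\partial_x\rho|^2\,dx$, which is a term of the entropy $\mathcal{E}$, not of the dissipation. The two differ by the weight $\rho^{-1}$, and that weight cannot be absorbed. The proposition assumes only \eqref{SCC}, so no lower bound on $\rho$ is available. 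Even under \eqref{NV}, the bound \eqref{lower rho path} is pathwise with a random constant, which is useless for a deterministic rate $\gamma^{(i)}_0$. As written, therefore, $\langle Z\rangle$ is not controlled by $\int_0^t\mathcal{D}^{(i)}_Z\,ds$, and the argument does not close.

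The missing idea is simply not to split off $\sqrt\rho$ in the $Q$-part; your first line in case (1) was pointing the right way. The paper writes $\frac12\rho Q=\frac12\rho^{\alpha-1}\partial_x\rho=\frac{1}{2\alpha}\partial_x\rho^\alpha$. It then uses the unweighted Cauchy--Schwarz inequality on the unit-length torus, $\big(\int_{\mathbb{T}}|\partial_x\rho^\alpha|\,dx\big)^2\le\|\partial_x\rho^\alpha\|^2_{L^2}$. The $\rho$-weighted version is kept only for the $\rho u$ part, as you did.

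With Young's inequality this gives $\langle Z\rangle(t)\le 2\|\sigma\|^2_{L^\infty}\int_0^t\big(\|\sqrt\rho u\|^2_{L^2}+\frac{1}{4\alpha^2}\|\partial_x\rho^\alpha\|^2_{L^2}\big)\,ds$. In case (1), $\|\partial_x\rho^\alpha\|_{L^2}=\|\partial_x\rho^{\frac{\gamma+\alpha-1}{2}}\|_{L^2}$. In case (2), $\|\partial_x\rho^\alpha\|^2_{L^2}=\|\partial_x\rho^{\frac{\alpha+\beta+1}{2}}\|^2_{L^2}\le c_p\|\partial_{xx}\rho^{\frac{\alpha+\beta+1}{2}}\|^2_{L^2}$.

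What remains is an explicit comparison of these coefficients with those in $\mathcal{D}^{(i)}_Z$ to fix $\gamma^{(i)}_0$. You should actually carry that out rather than assert it. The rest of your argument (the supermartingale/Doob proof of the exponential inequality and the localization) is fine.
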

\begin{proof}
The proof of Proposition \ref{Prop esp mart} strongly relies on the following martingale estimate 
\begin{equation}\label{Z mart ineq}
\mathbb{P} \bigg[ \sup_{t \ge 0} \bigg( Z(t)- \dfrac{\gamma}{2} \langle Z \rangle (t) \bigg) \ge R \bigg] \le e^{-\gamma R}, \quad \forall R, \; \gamma >0,
\end{equation}
which holds for any continuous martingale $\{ Z(t) \}_{t \ge 0} $ with related  quadratic variation denoted by $\langle Z \rangle (t).$
The entropy equality can be rewritten as 
\begin{equation*}
\begin{split}
& \mathcal{E}(\rho,u)(t)+  \int_{0}^{t} \mathcal{D}[\rho,u] ds  = \mathcal{E}(\rho_0,u_0)+ Z(t)+\dfrac{1}{2} \sum_{l=1}^{\infty} \int_{0}^{t} \int_{\mathbb{T}} \rho | \sigma_l |^2 dxds \quad 	\forall t\ge0,
\end{split}
\end{equation*}
with 
\begin{equation*}
Z(t)= \sum_{l=1}^{\infty} \int_{0}^{t} \bigg( \int_{\mathbb{T}} \bigg( \rho u +\mu(\rho) \dfrac{ \partial_x \rho}{2 \rho} \bigg) \sigma_l dx \bigg) \text{d}W^l
\end{equation*}
and the quadratic variation of $Z$ is expressed by
\begin{equation*}
\langle Z \rangle (t)= \sum_{l=1}^{\infty} \int_{0}^{t} \bigg( \int_{\mathbb{T}} \bigg( \rho u +\mu(\rho) \dfrac{ \partial_x \rho}{2 \rho} \bigg) \sigma_l dx \bigg)^2 ds.
\end{equation*}
First we observe that 
\begin{equation}\label{quadratic cov est 1}
\sum_{l=1}^{\infty} \int_{0}^{t} \bigg( \int_{\mathbb{T}} \rho u  \sigma_l dx \bigg)^2 ds \le \| \sigma \|^2_{L^\infty} \int_{0}^{t} \| \sqrt{\rho} u \|^2_{L^2} ds,
\end{equation}
then we have
\begin{equation}
\begin{split}
\sum_{l=1}^{\infty} \int_{0}^{t} \bigg( \int_{\mathbb{T}} \bigg( \mu(\rho) \dfrac{ \partial_x \rho}{2 \rho} \bigg) \sigma_l dx \bigg)^2 ds & \le \frac{\| \sigma \|^2_{L^\infty}}{4\alpha} \int_{0}^{t} \bigg( \int_{\mathbb{T}} | \partial_x \rho^{\alpha} | dx \bigg)^2 \\ & \le \frac{\| \sigma \|^2_{L^\infty}}{4\alpha} \int_{0}^{t} \| \partial_x  \rho^\alpha \|^2_{L^2} ds,
\end{split}
\end{equation}
thus by using also Young inequality we deduce
\begin{equation}
\langle Z \rangle (t) \le 2 \| \sigma \|^2_{L^\infty} \int_{0}^{t} \bigg( \| \sqrt{\rho} u \|^2_{L^2}+ \frac{1}{4\alpha} \| \partial_x  \rho^\alpha \|^2_{L^2} \bigg) ds.
\end{equation}
Our goal is to estimate the quadratic variation $ \langle Z \rangle (t)$ in terms of the entropy dissipation functional $D[\rho,u].$ To this purpose, we use the additional hypothesis $\alpha= \gamma-1$ or $\alpha=\beta+1$ to infer that 
\begin{equation} \label{quadratic cov est 2}
\| \partial_x  \rho^\alpha \|^2_{L^2}= \| \partial_x \rho^{\frac{\gamma+\alpha-1}{2}} \|^2_{L^2}, \quad 
\end{equation}
or 
\begin{equation} \label{quadratic cov est 2 k}
\| \partial_x  \rho^\alpha \|^2_{L^2} \le c_p  \| \partial_{xx} \rho^{\frac{\alpha+\beta+1}{2}} \|^2_{L^2}
\end{equation}
where $c_p$ is the Poincaré constant of the embedding 
\begin{equation}\label{poincarè}
\int_{\mathbb{T}} | \partial_x \rho^{\frac{\alpha+\beta+1}{2}}|^2 dx \le c_p \int_{\mathbb{T}} | \partial_{xx} \rho^{\frac{\alpha+\beta+1}{2}}|^2  dx.
\end{equation}
Now let us define the functional 
\begin{equation*}
\psi(t)=\mathcal{E}(\rho,u)(t) +\int_{0}^{t} \mathcal{D}_R[\rho,u] ds+ \frac{1}{2}  \int_{0}^{t} \mathcal{D}^{(i)}_Z[\rho,u] ds,
\end{equation*}
then we easily deduce that 
\begin{equation*}
\psi(t)-\dfrac{1}{2} \| \sigma \|^2_{L^\infty} \le \mathcal{E}(\rho_0,u_0)+ \bigg[ Z(t)-\frac{\gamma^{(i)}_0}{2} \langle Z \rangle (t) \bigg] +\frac{\gamma^{(i)}_0}{2} \langle Z \rangle (t)-\dfrac{1}{2} \int_{0}^{t} \mathcal{D}^{(i)}_Z[\rho,u] ds,
\end{equation*}
for $\gamma^{(i)}_0$ being a positive constant defined as 
\begin{equation*}
\gamma^{(1)}_0= \dfrac{\min\big( \varepsilon, \frac{2\gamma a^2}{(\gamma+ \alpha-1)^2} \big)}{4 \| \sigma \|^2_{L^\infty}}, \quad \gamma^{(2)}_0= \dfrac{\min\big( \varepsilon, \frac{2}{(\alpha+\beta+1)^2} \big)}{4 c_p \| \sigma \|^2_{L^\infty}},
\end{equation*}
in such a way that 
\begin{equation*}
\frac{\gamma^{(i)}_0}{2} \langle Z \rangle (t) -\dfrac{1}{2} \int_{0}^{t} \mathcal{D}^{(i)}_Z[\rho,u] ds \le 0.
\end{equation*}
Hence we have
\begin{equation}
\psi(t) -\dfrac{1}{2} \| \sigma \|^2_{L^\infty}t \le \mathcal{E}(\rho_0,u_0)+\bigg[ Z(t)- \dfrac{\gamma_0}{2} \langle Z \rangle (t) \bigg]
\end{equation}
from which we use \eqref{Z mart ineq} to deduce 
\begin{equation}
\begin{split}
& \mathbb{P} \bigg[ \sup_{t \ge 0}\bigg(  \psi(t)-\dfrac{1}{2} \| \sigma \|^2_{L^\infty}t \bigg) - \mathcal{E}(\rho_0,u_0) \ge R \bigg]  \le 
 \mathbb{P} \bigg[ \sup_{t \ge 0} \bigg( Z(t) -\dfrac{\gamma_0}{2} \langle Z \rangle (t) \bigg) \ge R \bigg] \le  e^{-\gamma_0 R},
\end{split}
\end{equation}
for all $R \ge 0.$
\end{proof}
\noindent
We conclude this part with the following result which is a direct and standard consequence of Proposition \ref{Prop esp mart}. We refer the reader to \cite{Coti}, Section 2.1.4 for a similar analysis. 
\begin{proposition}\label{Prop moment}
Let $(\rho,u)$ be a solution of \eqref{main system} with initial condition $(\rho_0,u_0) \in \mathcal{X}.$ Then, for any $m\ge 1$ there exists a constant $c_m >0$ such that 
\begin{equation}\label{moment}
\begin{split}
& \mathbb{E} \sup_{t \in [0,T]} \bigg( \mathcal{E}(\rho,u)(t) +\int_{0}^{t} \frac{1}{2} \mathcal{D}^{(i)}_Z[\rho,u] + \mathcal{D}_R[\rho,u]ds \bigg)^m \\ & \le c_m \bigg( \mathcal{E}(\rho_0,u_0)^m + \| \sigma \|^{2m}_{L^\infty} T^m+ {\gamma^{(i)}_0}^{-m} \bigg),  \quad \forall T\ge 1,
\end{split}
\end{equation}
and the exponential moment bound holds
\begin{equation}\label{exp moment}
\begin{split}
& \mathbb{E} \exp \bigg( \dfrac{\gamma^{(i)}_0}{2} \sup_{t \in [0,T] }\bigg(  \mathcal{E}(\rho,u)(t) +\int_{0}^{t} \frac{1}{2} \mathcal{D}^{(i)}_Z[\rho,u] + \mathcal{D}_R[\rho,u]ds \bigg) \bigg) \\ & \le \exp \dfrac{\gamma^{(i)}_0}{2} \bigg(  \mathcal{E}(\rho_0,u_0) +\frac{1}{2} \| \sigma \|^2_{L^\infty} T \bigg), \quad \forall T \ge 1.
\end{split}
\end{equation}
\end{proposition}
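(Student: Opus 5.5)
The plan is to derive the moment bounds from the exponential martingale inequality of Proposition \ref{Prop esp mart}, exactly as in the standard argument of \cite{Coti}. Set
\[
\Psi_T:=\sup_{t\in[0,T]}\Big(\mathcal{E}(\rho,u)(t)+\int_0^t \tfrac12\mathcal{D}^{(i)}_Z[\rho,u]+\mathcal{D}_R[\rho,u]\,ds\Big).
\]
Since the integrand is non-negative and $t\le T$, we have $\Psi_T\le \mathcal{E}(\rho_0,u_0)+\tfrac12\|\sigma\|^2_{L^\infty}T+Y$, where
\[
Y:=\sup_{t\ge0}\Big(\psi(t)-\tfrac12\|\sigma\|^2_{L^\infty}t\Big)-\mathcal{E}(\rho_0,u_0),
\]
with $\psi$ as in the proof of Proposition \ref{Prop esp mart}. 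In other words, $\Psi_T$ is dominated by a deterministic part plus the non-negative part of $Y$. By \eqref{esp mart ineq}, $\mathbb{P}(Y\ge R)\le e^{-\gamma^{(i)}_0R}$ for all $R>0$, so $Y^+$ has an exponential tail with rate $\gamma^{(i)}_0$.

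For the polynomial moments \eqref{moment}, we apply $(a+b+c)^m\le 3^{m-1}(a^m+b^m+c^m)$ to the domination above. The only random term is handled by the layer-cake formula:
\[
\mathbb{E}(Y^+)^m=\int_0^\infty mR^{m-1}\mathbb{P}(Y\ge R)\,dR\le m\int_0^\infty R^{m-1}e^{-\gamma^{(i)}_0R}\,dR=m!\,{\gamma^{(i)}_0}^{-m}.
\]
This gives \eqref{moment} with $c_m=3^{m-1}\max(1,m!\,2^{-m})$. The assumption $T\ge1$ is only cosmetic here.

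For the exponential bound \eqref{exp moment}, we write
\[
\mathbb{E}\exp\big(\tfrac{\gamma_0}{2}\Psi_T\big)\le \exp\Big(\tfrac{\gamma_0}{2}\big(\mathcal{E}(\rho_0,u_0)+\tfrac12\|\sigma\|^2_{L^\infty}T\big)\Big)\,\mathbb{E}\,e^{\frac{\gamma_0}{2}Y^+}.
\]
The layer-cake formula again gives
\[
\mathbb{E}\,e^{\frac{\gamma_0}{2}Y^+}\le 1+\int_0^\infty \tfrac{\gamma_0}{2}e^{\frac{\gamma_0}{2}R}e^{-\gamma_0R}\,dR=2.
\]
Consequently we obtain \eqref{exp moment} up to a factor $2$.

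The main obstacle is that the statement as written has no constant in front of the right-hand side of \eqref{exp moment}. To remove the factor $2$ I would try either to absorb it using $T\ge1$ together with the term $\tfrac{\gamma_0}{4}\|\sigma\|^2_{L^\infty}T$, which needs $\tfrac{\gamma_0}{4}\|\sigma\|^2_{L^\infty}\ge \log 2$ and is not automatic, or to accept the estimate with a harmless multiplicative constant, as in \cite{Coti}. One must also justify that $\psi$ is well defined for all $t\ge0$, which uses the global strong solutions of Theorem \ref{Thm well posedness}, and check that the supremum over $[0,T]$ is dominated as claimed, which holds since $t\le T$.
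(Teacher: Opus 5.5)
Your route is the one the paper intends. The paper gives no argument beyond calling the result a direct and standard consequence of Proposition~\ref{Prop esp mart}, and your domination $\Psi_T\le\mathcal{E}(\rho_0,u_0)+\frac12\|\sigma\|^2_{L^\infty}T+Y^+$ followed by the layer-cake formula is exactly that derivation.

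Two small fixes:
\begin{itemize}
\item The domination comes straight from the definition of $Y$ as a supremum and uses no sign of the integrand. (Nonnegativity is not even clear in case (2), where only half of the $\partial_{xx}$ term accompanies the $c(\alpha,\beta)$ term.) What you do need, to take $m$-th powers, is $\Psi_T\ge\psi(0)=\mathcal{E}(\rho_0,u_0)\ge 0$.
\item The constant is misassigned. The factor $2^{-m}$ multiplies $\|\sigma\|^{2m}_{L^\infty}T^m$, while ${\gamma^{(i)}_0}^{-m}$ comes with $\Gamma(m+1)$. So $c_m=3^{m-1}\Gamma(m+1)$.
\end{itemize}

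Your concern about \eqref{exp moment} is justified. The factor $2$ cannot be removed by any argument, because the inequality as written is false. Take $\alpha=\frac12$, $\gamma=\frac32$ (case (1)), $\beta\in[-3,0]$, $\sigma_1\equiv s$ constant and $\sigma_l=0$ for $l\ge2$, $(\rho_0,u_0)=(1,0)$, and $\varepsilon<\min(1,3a^2)$, so that $\gamma^{(1)}_0=\varepsilon/(4s^2)$. The solution is $\rho\equiv1$, $u(t,x)=U_t$ with $dU=-\varepsilon U\,dt+s\,dW^1$ and $U_0=0$. All gradient terms vanish, so $\mathcal{E}(\rho,u)(t)=\mathcal{E}(\rho_0,u_0)+\frac12U_t^2$, $\mathcal{D}^{(1)}_Z=\varepsilon U_t^2$ and $\mathcal{D}_R=0$. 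At $T=1$, after dividing by $e^{\gamma^{(1)}_0\mathcal{E}(\rho_0,u_0)/2}$, \eqref{exp moment} reads
\[
\mathbb{E}\exp\Big(\frac{\varepsilon}{8s^2}\sup_{t\le1}\Big(\frac12U_t^2+\frac{\varepsilon}{2}\int_0^tU_r^2\,dr\Big)\Big)\le e^{\varepsilon/16}.
\]
Writing $U_t=s\big(W^1_t-\varepsilon\int_0^te^{-\varepsilon(t-r)}W^1_r\,dr\big)$ gives $\sup_{t\le1}|U_t|\ge s(1-\varepsilon)\sup_{t\le1}|W^1_t|$. By $e^x\ge1+x$, the left side is then at least $1+\frac{\varepsilon}{16}(1-\varepsilon)^2\kappa$, where $\kappa=\mathbb{E}\sup_{t\le1}|W^1_t|^2>\mathbb{E}|W^1_1|^2=1$. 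The right side is $1+\frac{\varepsilon}{16}+O(\varepsilon^2)$, so the inequality fails for small $\varepsilon$. It is the supremum in time that costs the constant.

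The correct statement is \eqref{exp moment} with a factor $2$ on the right-hand side, which is exactly what your layer-cake computation proves. Your first suggested fix, absorbing the $2$ using $T\ge1$, cannot work in general. Your second option, accepting a multiplicative constant, is the right one.
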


\end{document}